\documentclass[11pt]{article}
\usepackage{mymacros}
\usepackage{mhequ}
\usepackage{comment}
\usepackage{mathtools}
\usepackage{color}

\mathtoolsset{showonlyrefs}

\usepackage{soul}

\newcommand{\PP}{\bm{P}}

\newcommand{\EE}{\bm{E}}

\newcommand{\RR}{\mathbb{R}}

\newcommand{\GFF}{\textup{GFF}}

\newcommand{\fineGFF}{X_K^f}
\newcommand{\tPhisP}{\tilde \Phi_s}
\newcommand{\sqrtpi}{} 
\newcommand{\ZDM}{{\rm Z}}
\newcommand{\scaling}{\sqrt{8\pi}}
\newcommand{\Max}{\max_{\Omega_\epsilon}}
\newcommand{\PhisKP}{\Phi_s}
\newcommand{\DiscTorusGrid}{\Omega_\epsilon^{\delta}}
\newcommand{\coarsesKGauss}{\tilde X_{s,K}^c}

\newcommand{\limcoarsePhiP}{Z_{s,K}^{c,0}}  
\newcommand{\scalinglog}{\frac{2}{\sqrt{2\pi}}}
\newcommand{\bu}{\mathbf{u}}
\newcommand{\coarsePhiP}{Z_{s,K}^c}  

\newcommand{\wick}[1]{: #1 :}
\newcommand{\besov}[3]{{B_{#1,#2}^{#3}}}

\newcommand{\minimiser}{\bar u}

\newcommand{\vE}{v^E}
\newcommand{\PhiE}{\Phi^{\cP,E}}
\newcommand{\minimiserE}{u^E}

\title{Multiscale coupling and the maximum of $\cP(\phi)_2$ models on the torus}

\author{Nikolay Barashkov \footnote{University of Helsinki, Department of Mathematics and Statistics. E-mail: {\tt nikolay.barashkov@helsinki.fi}}  \and Trishen S. Gunaratnam \footnote{Universit\'e de Gen\`eve,  { Section de math\'ematiques}. E-mail: {\tt trishen.gunaratnam@unige.ch }}
  \and Michael Hofstetter \footnote{University of Cambridge, Statistical Laboratory, DPMMS. E-mail: {\tt mh901@cam.ac.uk}.}  }

\date{}

\begin{document}

\maketitle

\begin{abstract}
We establish a coupling between the $\cP(\phi)_2$ measure and the Gaussian free field on the two-dimensional unit torus at all spatial scales, quantified by probabilistic regularity estimates on the difference field. Our result includes the well-studied $\phi^4_2$ measure.

The proof uses an exact correspondence between the Polchinski renormalisation group ap\-proach, which is used to define the coupling,
and the Bou\'e-Dupuis stochastic control representation for $\cP(\phi)_2$. 
More precisely, we show that the difference field is obtained from a specific minimiser of the variational problem.
This allows to transfer regularity estimates for the small-scales of minimisers, obtained using discrete harmonic analysis tools, to the difference field.  

As an application of the coupling, we prove that the maximum of the $\cP(\phi)_2$ field on the discretised torus with mesh-size $\epsilon > 0$ converges in distribution to a randomly shifted Gumbel distribution as $\epsilon \rightarrow 0$. 
\end{abstract}

\textbf{Keywords:} $\cP(\phi)_2$ field, Gaussian free field, log-correlated fields, Polchinski renormalisation group, Bou\'e-Dupuis variational formula, coupling, maximum

\textbf{MSC Classification:} 60G60, 82B41

\tableofcontents

\section{Introduction}

\subsection{Model and main result}

We study global probabilistic properties of the $\cP(\phi)_2$ Euclidean quan\-tum field theories on the two dimensional unit torus $\Omega=\T^2$.
These objects are measures $\nu^\cP$ on the space of distributions $S'(\Omega)$ that are formally given by
\begin{equs}
\label{eq:cont-pphi-density}
\nu^\cP(d\phi) \propto \exp\Big(- \int_\Omega \cP(\phi(x)) dx\Big) \nu^\GFF(d\phi),
\end{equs}
where $\cP$ is a polynomial of even degree with positive leading  coefficient
and $\nu^\GFF$ is the law of the massive Gaussian free field, i.e.\  the mean zero Gaussian measure with covariance $(-\Delta + m^2)^{-1}$ for some arbitrary mass $m>0$ that is fixed throughout this article.
The most famous example is when $\cP(\phi)$ is a quartic polynomial, in which case $\nu^\cP$ is known as the $\phi_2^4$ measure in finite volume with periodic boundary condition.

The origin of these measures lies in constructive quantum field theory, where they arise as Wick rotations of interacting bosonic quantum field theories in $1+1$ Minkowski space-time, see for instance \cite{Syma69} and \cite{MR0376002}.
Their construction was first achieved in 2D by Nelson in \cite{MR0210416}, see also the books by Simon \cite{MR0489552} and Glimm and Jaffe \cite[Section 8]{MR887102}.
Indeed, fields sampled from $\nu^\GFF$ are almost surely distributions in a Sobolev space of negative regularity, and hence cannot be evaluated pointwise.
Therefore, since there is no canonical definition of nonlinear functions of distributions, the density in \eqref{eq:cont-pphi-density} is ill-defined.
This can be seen concretely, if one imposes a small scale cut-off. Then the cut-off measures are well-defined, but as the cut-off is removed, one encounters so-called ultraviolet divergences.
It is well-known that a suitable renormalisation procedure is needed to remove these divergences. 
 
In order to give meaning to \eqref{eq:cont-pphi-density}, we  view it as a limit of renormalised lattice approximations. 
For $\epsilon>0$ let $\Omega_\epsilon = \Omega \cap (\epsilon \Z^2)$ be the discretised unit torus and denote $X_\epsilon = \R^{\Omega_\epsilon}= \{\varphi\colon \Omega_\epsilon \to \R\}$.
We assume throughout that $1/\epsilon \in \N$.  
Moreover, write $\Delta^\epsilon$ for the discrete Laplacian acting on functions $f \in  X_\epsilon$ by $\Delta^\epsilon f (x) = \epsilon^{-2} \sum_{y\sim x} \big ( f(y) - f(x) \big)$, where $y \sim x$ denotes that $x,y \in \Omega_\epsilon$ are nearest neighbours. 
Let $\nu^{\GFF_\epsilon}$ be the centred Gaussian measure on $X_\epsilon$ with covariance $(-\Delta^\epsilon + m^2)^{-1}$,
i.e.\ the law of the massive discrete Gaussian free field on $\Omega_\epsilon$. 
Note that as $\epsilon\to 0$, we have for all $x\in \Omega_\epsilon$
\begin{equs}
\label{eq:variance-gff}
c_\epsilon \equiv \var\big( \Phi_x^{\GFF_\epsilon} \big) = \frac{1}{2\pi} \log \frac{1}{\epsilon} + O(1), \qquad \Phi^{\GFF_\epsilon} \sim \nu^{\GFF_\epsilon}.
\end{equs}
To compensate this small scale divergence we renormalise the polynomial $\cP$ in \eqref{eq:cont-pphi-density} by interpreting it as Wick-ordered.
For $n\in \N$, we define the Wick powers 
\begin{equation}
\label{eq:wick-monomials}
\wick{\phi^n(x)}_\epsilon  = c_\epsilon^{n/2} H_n(\phi(x)/\sqrt{c_\epsilon}),
\end{equation}
where $H_n$ denotes the $n$-th Hermite polynomial and $c_\epsilon$ is as in \eqref{eq:variance-gff}. For instance, when $n=4$, we have
\begin{equation}
\wick{\phi^4(x)}_\epsilon = \phi^4(x) - 6 c_\epsilon \phi^2(x) + 3 c_\epsilon^2.
\end{equation} 
Note that by the multiplication with $c_\epsilon^n$ we ensure that the leading coefficient of the Wick power equals $1$.

It can be shown that as $\epsilon\to 0$ the Wick ordered monomials for the Gaussian free field converge to well-defined random variables $\wick{\Phi^n}$ with values in the space of distributions  $S'(\Omega)$.
Moreover, the collection $(\wick{\Phi^n})_{n\in \N}$ is orthogonal in $L^2(\mu^\GFF)$ and satisfies 
\begin{equation}
\E[\wick{\Phi^n(x)} \wick{\Phi^n(y)}]= n! \, G^n(x-y), \qquad \Phi \sim \nu^{\GFF},
\end{equation}
where $G = (-\Delta + m^2)^{-1}$ is the Green function of the Laplacian on $\Omega$.
This gives the Wick powers the interpretation of powers of the Gaussian free field.

For general polynomials of the form $\cP \colon \R \to \R, r\mapsto \sum_{k=1}^{N} a_k r^k$ of even degree $N \in 2\N$,  and coefficients satisfying $a_k \in \R$, $1 \leq k < N$, and $a_N>0$, we define the Wick ordering
\begin{equation}
\label{eq:wick-polynomials}
\wick{\cP(\phi(x))}_\epsilon =  \sum_{k=1}^{N} a_k \wick{\phi^k(x)}_\epsilon.
\end{equation}
When it is clear from the context or when there is no conceptual difference between the discrete Wick power and the continuum limit, we drop $\epsilon$ from the notation.
Note that, by a simple scaling argument, for the construction of the measures \eqref{eq:cont-pphi-density} we may assume without loss of generality that $\cP$ has no constant term.

As for the monomials, the Wick ordered polynomials converge to well-defined random variables with values in a space of distributions as $\epsilon \to 0$.
However, as $\epsilon \to 0$ one loses the uniform lower bound of $\wick{\cP}_\epsilon$.
Indeed, due to the logarithmic divergence of $c_\epsilon$ in \eqref{eq:variance-gff}, we have for $\wick{\cP}_\epsilon$ as in \eqref{eq:wick-polynomials}
\begin{equation}
\inf_{x\in \Omega_\epsilon} \wick{\cP}_\epsilon \, \geq - O(|\log \epsilon|^{n}), \qquad \epsilon \to 0
\end{equation}
for some $n\in \N$.
Therefore, the exponential integrability (and hence the construction of \eqref{eq:cont-pphi-density} as a limit) is not immediate.  
Nelson leveraged the polylogarithmic divergence in $d=2$ to give a rigorous construction of the continuum object $\nu^\cP$,
which can be obtained as a weak limit of the regularised measures
$\nu^{\cP_\epsilon}$ on $X_\epsilon$ defined by
\begin{equation}
\label{eq:nu-p-eps}
\nu^{\cP_\epsilon}(d \phi)
\propto
\exp\Big( -  \epsilon^2 \sum_{x\in \Omega_\epsilon} \wick{\cP(\phi(x))}_\epsilon  \Big) \nu^{\GFF_\epsilon}(d\phi).
\end{equation}
Other constructions also exist, for example via stochastic quantisation techniques \cite{MR3825880}, the Bou\'e-Dupuis stochastic control representation \cite{MR4173157}, and martingale methods \cite{2003.12535}. 

Fields sampled from $\nu^\cP$ have similar small scale behaviour to the Gaussian free field on $\Omega$.
Indeed, the two measures are mutually absolutely continuous and, as such, $\nu^\cP$ is supported on a space of distributions, not functions.
One of the central themes of this article is to quantify the difference in short-distance behaviour between these two measures,
and to see whether the similarities on small scales lead to similarities on a global scale, e.g.\ universal behaviour of the maxima. 

Finally, let us remark that similar Wick renormalisation procedures have been successfully used to construct other continuum non-Gaussian Euclidean field theories in 2D,
see for instance \cite{MR4399156}, for the sine-Gordon field and \cite{BarashkovDeVecchi2021Elliptic} for the sinh-Gordon field. 

Our main result is a probabilistic coupling between the $\cP(\phi)_2$ field and the Gaussian free field at all scales,
which allows to write the field of interest as a sum of the Gaussian free field and a (non-independent) regular difference field.
The methods we use rely on a stochastic control formulation developed for the $\phi_2^4$ field in \cite{MR4173157}
and the essentially equivalent non-perturbative Polchinski renormalisation group approach that was used in \cite{MR4399156} for the sine-Gordon field. 
Our point of view in this work resembles that in the latter reference, 
for which we use a similar notation. 
Specifically, we construct the $\epsilon$-regularised $\cP(\phi)_2$ field as the solution to the high dimensional SDE
\begin{equation}
\label{eq:pphi-sde-construction-intro}
d \Phi_t^{\cP_\epsilon,E} 
= - \dot c_t^\epsilon \nabla v_t^{\epsilon,E} (\Phi_t^{\cP_\epsilon,E}) dt+ (\dot c_t^\epsilon)^{1/2} d W_t, \qquad \Phi_\infty^{\cP_\epsilon,E} = 0,
\end{equation}
where $(\dot c_t^\epsilon)_{t\in [0,\infty]}$ is associated to the Pauli-Villars decomposition of the Gaussian free field covariance defined for $t \in (0,\infty)$ by
\begin{equs}
\label{eq:def-pauli-villars-intro}
 c_t^\epsilon
 =
 ( -\Delta^\epsilon + m^2 + 1/t)^{-1}, \qquad \dot c_t^\epsilon = \frac{d}{dt}c_t^\epsilon,
\end{equs}
and with $c_0^\epsilon =0$ and $c_\infty^\epsilon = (-\Delta^\epsilon + m^2)^{-1}$,
$W$ is a Brownian motion in $\Omega_\epsilon$, and $v_t^{\epsilon,E}$ is the renormalised potential to be defined later.
The Pauli-Villars decomposition could be replaced by, for example, a heat-kernel decomposition,
but it is technically convenient to work with the former rather than the latter,
see Remark \ref{rem:pauli-villars} for a more precise discussion on the rigidity concerning the choice of scale decomposition. 

The energy cut-off $E>0$ is sufficient to ensure that the SDE \eqref{eq:pphi-sde-construction-intro} is well-defined and has strong existence, and is removed by taking $E\to \infty$.
The stochastic integral on the right-hand side of \eqref{eq:pphi-sde-construction-intro} corresponds to a scale decomposition of the Gaussian free field,
i.e.\ setting
\begin{equation}
\label{eq:decomposed-gff-intro}
\Phi_t^{\GFF_\epsilon} = \int_t^\infty (\dot c_s^\epsilon)^{1/2} d W_s
\end{equation}
we obtain a Gaussian process $(\Phi_t^{\GFF_\epsilon}\big)_t$ started at $t=\infty$ with $\Phi_\infty^{\GFF_\epsilon} = 0$ and such that $\Phi_0^{\GFF_\epsilon} \sim \nu^{\GFF_\epsilon}$.
Thus, going back to \eqref{eq:pphi-sde-construction-intro} the difference field corresponds to the finite variation term on the right-hand side of the SDE.

We state the main result in the following theorem on the level of regularisations.
For $\alpha \in \R$ let $H^\alpha(\Omega_\epsilon)\equiv H^\alpha$ be the (discrete) Sobolev space of regularity $\alpha$, see Section \ref{ssec:sobolev} for a precise definition.
Moreover, we denote by $C_0([0, \infty), \cS)$ the space of continuous sample paths with values in a metric space $(\cS, \|\cdot\|_\cS)$ that vanish at infinity equipped with the topology of compact convergence.

\begin{theorem}
\label{thm:coupling-pphi-to-gff-eps}
There exists a process $\Phi^{\cP_\epsilon}\in C_0([0,\infty), H^{-\kappa})$ for any $\kappa >0$ such that
  \begin{equation}
    \label{eq:coupling-pphi-to-gff-eps}
    \Phi_t^{\cP_\epsilon}
    = \Phi_t^{\Delta_\epsilon}
      + \Phi_t^{\GFF_\epsilon}, \qquad  \Phi_0^{\cP_\epsilon} \sim \nu^{\cP_\epsilon} ,
    \end{equation}
   where the difference field $\Phi^{\Delta_\epsilon}$ satisfies
\begin{equs}
\label{eq:phi-delta-h1}
 &\sup_{\epsilon >0 } \sup_{t\geq 0} \E [\|\Phi_t^{\Delta_\epsilon}\|_{H^{1}}^2] < \infty,
 \\
\label{eq:phi-delta-h2}
&\sup_{\epsilon>0} \sup_{t\geq t_0} \E [\|\Phi_t^{\Delta_\epsilon}\|_{H^{2}}^2] < \infty
\end{equs}
for any $t_0>0$, and for any $\alpha \in [0,2)$ and some large enough $L\geq 4$
\begin{align} 
\label{eq:phi-delta-1-2}
&\sup_{\epsilon >0 } \sup_{t\geq 0} \E [\|\Phi_t^{\Delta_\epsilon}\|_{H^{\alpha}}^{2/L}] < \infty, \\
\label{eq:phi-delta-to-0}
&\sup_{\epsilon>0} \E[ \| \Phi_t^{\Delta_\epsilon}-\Phi_0^{\Delta_\epsilon}\|_{H^{\alpha}}^{2/L} ]\to 0 \text{~as~} t\to 0.
\end{align}  
Moreover, for any $t>0$, $\Phi^{\GFF_\epsilon}_0-\Phi_t^{\GFF_\epsilon}$ is independent of $\Phi_t^{\cP_\epsilon}$.
\end{theorem}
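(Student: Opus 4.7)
My plan is to build $\Phi^{\cP_\epsilon}$ as the $E\to\infty$ limit of the solutions to \eqref{eq:pphi-sde-construction-intro}. Writing the SDE in integrated backward form and comparing with the scale decomposition \eqref{eq:decomposed-gff-intro} gives the pathwise identity
\begin{equation}
\Phi_t^{\cP_\epsilon,E} = -\int_t^\infty \dot c_s^\epsilon \nabla v_s^{\epsilon,E}\bigl(\Phi_s^{\cP_\epsilon,E}\bigr)\,ds + \Phi_t^{\GFF_\epsilon},
\end{equation}
so the natural candidate for the difference field $\Phi_t^{\Delta_\epsilon,E}$ is the finite-variation drift. Standard Polchinski/Girsanov arguments give $\Phi_0^{\cP_\epsilon,E}\sim \nu^{\cP_\epsilon,E}$, and the correct marginal $\nu^{\cP_\epsilon}$ is recovered after the $E\to\infty$ limit.

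\textbf{Identification with the Boué-Dupuis minimiser.} The central input, advertised in the abstract, is an exact correspondence expressing the drift as a conditional expectation of a specific minimiser $\minimiserE$ of the Boué-Dupuis variational problem associated to the cutoff potential, schematically
\begin{equation}
\dot c_t^\epsilon \nabla v_t^{\epsilon,E}\bigl(\Phi_t^{\cP_\epsilon,E}\bigr) = \E\bigl[\,(\text{quantity built from }\minimiserE)\,\bigm|\,\mathcal{F}_t\bigr].
\end{equation}
This should follow by matching the Euler-Lagrange equation for the variational problem with the Polchinski flow satisfied by $v_s^{\epsilon,E}$, combined with the Markov property of the scale decomposition of the free field.

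\textbf{Regularity transfer, continuity, and removing the cutoff.} By Jensen's inequality the bounds \eqref{eq:phi-delta-h1}--\eqref{eq:phi-delta-1-2} reduce to corresponding Sobolev estimates on the minimiser $\minimiserE$, uniform in $\epsilon$, $E$ and $t$. I would derive these by testing the Euler-Lagrange equation against appropriate Sobolev multipliers: the leading Wick monomial of $\cP$ supplies convexity, discrete Gaussian hypercontractivity handles the lower-order Wick terms, and multiplicative couplings are controlled by paraproducts and Gagliardo-Nirenberg type inequalities on $\Omega_\epsilon$. The restriction $t\ge t_0$ in \eqref{eq:phi-delta-h2} reflects a loss of small-scale smoothing in $v_t^{\epsilon,E}$ as $t\to 0$. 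For \eqref{eq:phi-delta-to-0} I write
\begin{equation}
\Phi_t^{\Delta_\epsilon,E} - \Phi_0^{\Delta_\epsilon,E} = \int_0^t \dot c_s^\epsilon \nabla v_s^{\epsilon,E}\bigl(\Phi_s^{\cP_\epsilon,E}\bigr)\,ds
\end{equation}
and apply the same minimiser bounds together with dominated convergence. The uniform estimates then yield tightness of $(\Phi^{\cP_\epsilon,E})_{E>0}$ in $C_0([0,\infty),H^{-\kappa})$; any limit point is the desired $\Phi^{\cP_\epsilon}$, and the regularity estimates pass to the limit by Fatou. Independence of $\Phi_0^{\GFF_\epsilon} - \Phi_t^{\GFF_\epsilon} = \int_0^t (\dot c_s^\epsilon)^{1/2}\,dW_s$ and $\Phi_t^{\cP_\epsilon}$ then follows because the latter depends only on the Brownian increments on $[t,\infty)$ by backward-in-time construction.

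\textbf{Main obstacle.} The hard part will be the uniform Sobolev regularity of the minimiser. Controlling the subleading Wick monomials, which are renormalised distributions of negative regularity and may enter with unfavourable sign, against the convexifying leading term, while keeping constants uniform in the logarithmic divergence of $c_\epsilon$, requires a careful multiscale argument: this is the discrete harmonic analysis content promised in the abstract and is where the true difficulty lies.
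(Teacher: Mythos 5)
Your overall architecture matches the paper's: construct $\Phi^{\cP_\epsilon,E}$ via the cut-off Polchinski SDE, identify its drift with the Bou\'e--Dupuis minimiser, derive uniform bounds on the minimiser, transfer them to Sobolev norms of $\Phi^{\Delta_\epsilon,E}$, and then remove the cut-off by tightness and Prohorov. Two points diverge, one minor and one substantive.

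The minor one is the ``identification'' step. The paper's Proposition \ref{prop:bd-polchinski-correspondence} shows that the process $\minimiserE_s=-q_s^\epsilon\nabla v_s^{\epsilon,E}(\Phi_s^{\cP_\epsilon,E})$ \emph{is itself} a minimiser of the conditional Bou\'e--Dupuis problem (a verification theorem, proved via It\^o's formula and the Polchinski equation). The difference field is then the pathwise integral $\Phi_t^{\Delta_\epsilon,E}=I_{t,\infty}^\epsilon(\minimiserE)$, not a conditional expectation of the minimiser. Your schematic relation $\dot c_t\nabla v_t(\Phi_t)=\E[\cdots\mid\cF_t]$ is therefore not the right formulation, and the filtration should be the backward one $\cF^t$.

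The substantive deviation is how you propose to obtain the uniform regularity of the minimiser. You suggest testing the Euler--Lagrange equation against Sobolev multipliers and closing the estimate via convexity of the leading Wick monomial, hypercontractivity for the lower-order terms, and paraproduct/Gagliardo--Nirenberg bounds. The paper does something genuinely different and much softer: it never uses the Euler--Lagrange equation. Instead it uses the \emph{comparison} $F^{\infty,E}(\minimiser)\le F^{\infty,E}(0)\le 0$ (resp.\ the conditional version $F^{t,E}(\minimiser,\varphi)\le F^{t,E}(0,\varphi)$), together with coercivity of the energy functional obtained by absorbing the Wick cross-terms into $a_N\|I_\infty(u)\|_{L^N}^N+\tfrac12\int\|u\|_{L^2}^2$ via Besov multiplication, interpolation (Lemma \ref{lem:numerology}) and Young's inequality. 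This yields $\E\int_0^\infty\|\minimiser_s\|_{L^2}^2\,ds\lesssim 1$ and the conditional small-scale bound of Proposition \ref{prop:cond-exp-small-scales-min}, and the positive-regularity Sobolev information then comes for free from the \emph{deterministic} smoothing of the Pauli--Villars kernel in Lemmas \ref{lem:integrated-drift-large-scales-h1}--\ref{lem:integrated-drift-small-scales-h2} (e.g.\ $\|I_{t,\infty}(u)\|_{H^\alpha}^2\lesssim t^{-(\alpha-1)}\int_t^\infty\|u_\tau\|_{L^2}^2d\tau$), not from any PDE estimate on $v_t^{\epsilon,E}$. The fractional exponent $2/L$ arises from pulling a $1/L$ power through the conditional expectation by Jensen, with $L$ fixed by the interpolation numerology, not from hypercontractivity. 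Your route, by contrast, would require direct control on $\nabla v_t^{\epsilon,E}$, which is precisely the quantity the variational comparison argument is designed to avoid estimating; this is likely harder and is not obviously closable, so I would treat it as a gap unless you can make the multiplier argument precise. The remainder of your plan (tightness, Prohorov, passing estimates to the limit, independence from the backward filtration) is the same as the paper's, modulo that the paper establishes tightness for $(\Phi^{\Delta_\epsilon,E})_E$ directly and uses a monotone truncation $\lim_{C\to\infty}\lim_{E\to\infty}\E[\|\cdot\|^{2/L}\wedge C]$ rather than Fatou.
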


Let us make two important remarks concerning the optimality of the regularity estimates in Theorem \ref{thm:coupling-pphi-to-gff-eps}.
First, the restriction to $\alpha\in [0,2)$ in \eqref{eq:phi-delta-1-2} and \eqref{eq:phi-delta-to-0} seems to be optimal, at least with respect to our method.
Other reasonable choices of $(\dot c_t^\epsilon)_{t\in [0,\infty]}$, see again Remark \ref{rem:pauli-villars}, should also lead to the same estimates \eqref{eq:phi-delta-h1}, \eqref{eq:phi-delta-1-2} and \eqref{eq:phi-delta-to-0}.
Second, we do not expect that the probabilistic regularity estimates on the difference field $\Phi_t^{\Delta_\epsilon}$, $t\geq 0$ in Theorem \ref{thm:coupling-pphi-to-gff-eps} can be replaced by deterministic ones,
in contradistinction to the case of the sine-Gordon field considered in \cite{MR4399156}.
This is because $\Phi_t^{\Delta_\epsilon}$ is related to $\nabla v_t^{\epsilon,E}(\phi)$, $\phi\in X_\epsilon$ in \eqref{eq:polchinski-pphi-cut-off}, which is, unlike for the sine-Gordon field, unbounded in $\phi$ as $E\to \infty$.
In fact, since the constant $L$ depends linearly on the degree of the polynomial $\cP$, the bounds in Theorem \ref{thm:coupling-pphi-to-gff-eps} become weaker as the degree of $\cP$ increases.
This suggests that the larger the growth of the nonlinearity in \eqref{eq:cont-pphi-density}, the weaker the bounds for the difference field. Note that, in the case of the $\phi^4_2$ field, we can take $L=4$.

As a corollary of Theorem \ref{thm:coupling-pphi-to-gff-eps}, we also obtain the following statement for the continuum, i.e.\ $\epsilon=0$.
We stress that, in the statement below, $H^\alpha = H^\alpha(\Omega)$ refers to the usual, i.e.\ continuum, Sobolev space of regularity $\alpha$ over $\Omega$.

\begin{corollary}
\label{cor:pphi-coupling-continuum}
There exists a process $\Phi^{\cP} \in C_0([0,\infty), H^{-\kappa})$ for every $\kappa>0$ such that
\begin{equation}
\label{eq:coupling-pphi-to-gff-cont}
\Phi_t^{\cP}
= \Phi_t^{\Delta}
+ \Phi_t^{\GFF},
\end{equation}
where $\Phi_0^\cP$ is distributed as the continuum $\cP(\phi)_2$ field and $\Phi_0^\GFF$ is distributed as the continuum Gaussian free field.
For the difference field $\Phi^\Delta$, the analogous estimates as for $\Phi_t^{\Delta_\epsilon}$ in Theorem \ref{thm:coupling-pphi-to-gff-eps}  hold in the continuum Sobolev spaces. 
Finally, for any $t>0$, $\Phi_0^\GFF- \Phi_t^\GFF$ is independent of $\Phi_t^\cP$.
\end{corollary}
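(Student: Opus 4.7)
The plan is to obtain the continuum coupling as a joint weak limit along a subsequence $\epsilon \to 0$ of the discrete couplings furnished by Theorem~\ref{thm:coupling-pphi-to-gff-eps}. I would first embed the discrete fields in the continuum (for instance via trigonometric interpolation, so that discrete Sobolev norms are comparable, up to uniform constants, to continuum ones), and then establish joint tightness of the laws of the triple $(\Phi^{\cP_\epsilon}, \Phi^{\GFF_\epsilon}, \Phi^{\Delta_\epsilon})$ as elements of $C_0([0,\infty), H^{-\kappa}) \times C_0([0,\infty), H^{-\kappa}) \times C_0([0,\infty), H^{\alpha - \kappa})$ for small $\kappa > 0$ and any fixed $\alpha \in (0,2)$.

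Tightness of the difference component follows from the uniform-in-$\epsilon$ estimates \eqref{eq:phi-delta-h1}--\eqref{eq:phi-delta-to-0}: the spatial bound in $H^\alpha$ combined with the compact embedding $H^\alpha \hookrightarrow H^{\alpha - \kappa}$ controls each marginal $\Phi_t^{\Delta_\epsilon}$, the modulus estimate \eqref{eq:phi-delta-to-0} handles temporal continuity at $t=0$, and the stronger $H^2$ bound \eqref{eq:phi-delta-h2} together with the explicit SDE \eqref{eq:pphi-sde-construction-intro} yields Kolmogorov-type temporal regularity on $[t_0, \infty)$ for every $t_0 > 0$. Tightness of $\Phi^{\GFF_\epsilon}$ follows from convergence of the discrete scale decomposition to a continuum one, and tightness of $\Phi^{\cP_\epsilon}$ is then a consequence of \eqref{eq:coupling-pphi-to-gff-eps}. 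Having extracted a joint limit $(\Phi^\cP, \Phi^\GFF, \Phi^\Delta)$, linearity of weak limits yields \eqref{eq:coupling-pphi-to-gff-cont}, while Nelson's construction identifies $\Phi_0^\cP \sim \nu^\cP$ and the law of $\Phi_0^\GFF$ is the continuum GFF. The continuum regularity estimates then follow from Fatou's lemma applied to the uniform bounds, using lower semicontinuity of the Sobolev norms under the relevant weak limits. Independence of $\Phi_0^\GFF - \Phi_t^\GFF$ and $\Phi_t^\cP$ descends to the limit because independence is preserved under joint weak convergence.

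The main obstacle is achieving joint rather than merely marginal tightness, and identifying the Gaussian component of the limit as the correct continuum scale-decomposed GFF. Since $\Phi^{\GFF_\epsilon}$ and $\Phi^{\Delta_\epsilon}$ are built from the same Brownian motion driving \eqref{eq:pphi-sde-construction-intro}, the natural approach is to realise all the discrete systems as projections of a single Brownian motion on $\Omega$ and to argue convergence of the SDE as $\epsilon \to 0$. Alternatively, Skorokhod representation combined with the explicit Gaussian structure of $\Phi^{\GFF_\epsilon}$ lets one identify the Gaussian marginal of the limit directly, after which the difference field is determined by subtraction from $\Phi^\cP$.
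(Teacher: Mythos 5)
Your proposal is essentially correct and closely mirrors the paper's argument; in particular you correctly identify the key obstacle (joint convergence, not merely marginal tightness) and the right resolution (all the discrete fields are built from a single cylindrical Brownian motion on $L^2(\Omega)$, with $\Phi^{\GFF_\epsilon}$ obtained by restricting the Fourier series to $\Omega_\epsilon^*$).

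The paper's treatment is a little leaner on two points worth noting. First, rather than arguing joint tightness of a triple and then identifying the marginals, the paper proves in Lemma~\ref{lem:PhiGFF-limit-pauli-villars} that $I_\epsilon\Phi^{\GFF_\epsilon}\to\Phi^{\GFF_0}$ \emph{strongly} (in $L^2(\P)$, hence in probability) as $C_0$-processes with values in $H^\alpha$; combined with subsequential weak convergence of $\Phi^{\Delta_\epsilon}$ (Proposition~5.5), a Slutsky-type argument then gives convergence of the sum $\Phi^{\cP_\epsilon}=\Phi^{\Delta_\epsilon}+\Phi^{\GFF_\epsilon}$ to $\Phi^{\Delta_0}+\Phi^{\GFF_0}$ with no need to extract a joint limit of a product tightness argument. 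Second, for temporal regularity you need not invoke a Kolmogorov continuity criterion: Lemma~\ref{lem:integrated-drift-small-scales-h1} provides the pathwise, deterministic H\"older-in-time estimate
\begin{equation}
\|\Phi_t^{\Delta_\epsilon,E}-\Phi_s^{\Delta_\epsilon,E}\|_{H^\alpha}^2\lesssim (t-s)^{1-\alpha}\int_0^\infty\|\bar u^E_\tau\|_{L^2}^2\,d\tau,
\end{equation}
which, together with the uniform $L^2$ bound on the minimiser from Proposition~\ref{prop:minimiser-l2}, feeds directly into an Arz\`ela--Ascoli compactness argument. Your sketch also proposes "arguing convergence of the SDE as $\epsilon\to 0$''; beware that this would require controlling the renormalised potential $v_t^\epsilon$ uniformly as $\epsilon\to 0$, which the paper deliberately avoids by working through the variational representation. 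The rest of your plan --- passing the Sobolev bounds to the limit by Fatou/lower semicontinuity, identifying $\Phi_0^\cP\sim\nu^\cP$ via convergence $\nu^{\cP_\epsilon}\to\nu^\cP$, and noting that independence is preserved under weak limits --- coincides with the paper's proof.
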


As we shall see in the proof of Corollary \ref{cor:pphi-coupling-continuum}, we construct the continuum processes $\Phi^\Delta$ in \eqref{eq:coupling-pphi-to-gff-cont} as a weak limit of $(\Phi^{\Delta_\epsilon})_\epsilon$ as $\epsilon \to 0$ along a subsequence, thereby establishing the existence of $\Phi^\cP$.
While the convergence as processes only holds along a subsequence, we prove that every subsequential limit of $(\Phi^{\Delta_\epsilon})_\epsilon$ and hence of $(\Phi^{\cP_\epsilon})_\epsilon$ has the same law.

The key ingredient in the proof of our main result, Theorem \ref{thm:coupling-pphi-to-gff-eps}, is an exact correspondence between two different stochastic representations of $\cP(\phi)_2$:
a stochastic control representation, called the Bou\'e-Dupuis representation, and the Polchinski renormalisation group approach, see Section \ref{ssec:correspondence}.
More precisely, the difference field is directly related to a special minimiser of the stochastic control problem.
We use this correspondence to transfer fractional moment estimates on the minimiser to the difference field. 

In the case of the sine-Gordon field considered in \cite{MR4399156}, the proof of the analogous estimates on $\Phi^{\Delta_\epsilon}$ relies heavily on deterministic estimates on the gradient of the renormalised potential $v_t^\epsilon$,
which is enabled by a Yukawa gas representation from \cite{MR914427}.
Such a representation is not available in the present case, as the nonlinearity $\cP(\phi)$ is not periodic in the field variables $(\phi_x)_{x\in \Omega_\epsilon}$.
Therefore, our approach is more robust in the sense that we do not need to know the gradient of the renormalised potential,
for which a uniform bound may not be available, 
and we do not need the periodicity requirement on the potential.

It would be of interest to extend our approach to other continuum Euclidean field theories. On the one hand, we believe that our approach can be extended to treat fields with more general nonlinearities in the log-density, such as the sinh-Gordon model. On the other hand, by combining our techniques with paracontrolled ansatz techniques developed in \cite{MR4173157},
it may be possible to analyse the sine-Gordon field up to the same regime as treated in \cite{MR4399156}.
Note that our techniques would a priori yield probabilistic estimates on the difference field, while the precise analysis of the gradient of the renormalised potential in \cite{MR4399156} yields deterministic estimates. 
It would be interesting to see whether one can recover similar deterministic estimates with our approach.
For $\beta < 4\pi$, this seems possible by using methods developed in \cite{Barashkov2022SineGordon}.
A treatment of the full subcritical regime $\beta < 8\pi$ using either method would be of great interest.
Let us mention that in the setting of \textit{fermionic} Euclidean field theories, such couplings have been constructed for a $\phi^{4}$-type model in the full subcritical regime in \cite{DeVecchi2022Stochastic} using a Forward-Backward SDE approach.
Although the objects treated are not the same (recall that we are interested in \textit{bosonic} Euclidean field theories),
on a high level their general framework is closely related to our point of view here and may be of help in analysing sine-Gordon for $\beta < 8\pi$. 

Finally, we remark that similar couplings can be obtained from stochastic quantisation, for instance with the methods of \cite{MR4164267} or \cite{MR3951704},
which allow to write the $\cP(\phi)_2$ field as a sum of a Gaussian free field and a non-Gaussian regular field with values in $H^1(\Omega)$.
However, the couplings obtained in this way do not have the independence property stated in Corollary \ref{cor:pphi-coupling-continuum}.
This property together with the fact that our regularity estimates imply $L^\infty$ bounds for $\Phi_t^{\Delta}$, $t\geq 0$ enables us to study the distribution of the centred maximum of the field as described below.

\subsection{Application to the maximum of the $\cP(\phi)_2$ field}

The strong coupling to the Gaussian free field in Theorem \ref{thm:coupling-pphi-to-gff-eps} is a useful tool to study novel probabilistic aspects of $\cP(\phi)_2$ theories that go beyond the scope of the current literature. 
As an illustration, we investigate the global centred maximum of the regularised fields $\Phi^{\cP_\epsilon}$ defined as
\begin{equs}
M^\epsilon 
= 
\Max \Phi^{\cP_\epsilon} \qquad \Phi^{\cP_\epsilon}\sim \nu^{\cP\epsilon}.
\end{equs}
It is clear that as $\epsilon \to \infty$ this random variable diverges, since the limiting field takes values in a space of distributions. For the (massive) Gaussian free field, i.e.\ the case $\cP(\phi) = 0$, this divergence has been quantified in \cite{MR2846636} by
\begin{equs}
\label{eq:expectation-maximum}
\E[M^\epsilon] 
=
 \mathfrak{m}_\epsilon + O(1), \qquad \mathfrak{m}_\epsilon= \frac{1}{\sqrt{2\pi} } (2\log \frac{1}{\epsilon}-\frac{3}{4}\log \log \frac{1}{\epsilon}).
\end{equs}
Moreover, in this reference it is also shown that the sequence of centred maxima $(M^\epsilon - \mathfrak{m}_\epsilon)_\epsilon$ is tight.
We remark that these results were initially proved for the massless Gaussian free field on a box with Dirichlet boundary condition rather than on the torus, but the arguments are not susceptible to a different choice of boundary.
The minor difference between the prefactor $1/\sqrt{2\pi}$ in \eqref{eq:expectation-maximum} to \cite{MR2846636} and various other references comes from our different scaling of the fields, see \eqref{eq:variance-gff}.

It is clear that the coupling in Theorem \ref{thm:coupling-pphi-to-gff-eps} for $t=0$ together with the properties of $\Phi^\Delta$ imply \eqref{eq:expectation-maximum} and tightness of the centred global maxima for the $\cP(\phi)_2$ field.
Indeed, by the standard Sobolev embedding in $d=2$, the Sobolev norm in \eqref{eq:phi-delta-1-2} can be replaced by the $L^\infty$-norm,
and hence, the maximum of the $\cP(\phi)_2$ field and that of the GFF differ by a random variable with finite fractional moment.
In particular, this implies that \eqref{eq:expectation-maximum} also holds for general even polynomials $\cP$ with positive leading coefficient.

Exploiting also the larger scales $t>0$ of \eqref{eq:coupling-pphi-to-gff-eps} allows to understand the $O(1)$ terms in $M^\epsilon - \mathfrak{m}_\epsilon$ and establish the following convergence in distribution as $\epsilon \to 0$.

\begin{theorem}
  \label{thm:convergence-to-gumbel}
The centred maximum of the $\epsilon$-regularised $\cP(\phi)_2$ field $\Phi^{\cP_\epsilon} \sim \nu^{\cP_\epsilon}$ converges in distri\-bu\-tion  as  $\epsilon \to 0$
  to a randomly shifted Gumbel distribution, i.e.\
  \begin{equation} 
  \label{e:thm-max-convergence}
    \max_{\Omega_\epsilon} \Phi^{\cP_\epsilon} - \mathfrak{m}_\epsilon
    \to \frac{1}{\sqrt{8\pi}} X + \frac{1}{\sqrt{8\pi}}\log \ZDM^{\cP} + b,
  \end{equation}
  where $\ZDM^{\cP} $ is a nontrivial positive random variable,
  $X$ is an independent standard Gumbel random variable,
  and $b$ is a deterministic constant.
\end{theorem}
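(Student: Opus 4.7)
The plan is to transfer the known convergence of the centred maximum of the discrete Gaussian free field (DGFF) to the $\cP(\phi)_2$ setting by means of the coupling in Theorem \ref{thm:coupling-pphi-to-gff-eps}, exploiting crucially the independence of $\Phi_t^{\cP_\epsilon}$ and the small-scale Gaussian increment. By the results of Bramson-Ding-Zeitouni and Biskup-Louidor, the centred DGFF maximum converges in distribution to $\tfrac{1}{\scaling}X + \tfrac{1}{\scaling}\log\ZDM^\GFF + b_0$, with $X$ standard Gumbel and $\ZDM^\GFF$ (proportional to) the total mass of the critical derivative martingale measure. I aim to reduce Theorem \ref{thm:convergence-to-gumbel} to this statement by conditioning at an intermediate scale.

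Fix an auxiliary scale $t>0$. The coupling gives
\begin{equation*}
\Phi_0^{\cP_\epsilon} = \Phi_t^{\cP_\epsilon} + \bigl(\Phi_0^{\GFF_\epsilon} - \Phi_t^{\GFF_\epsilon}\bigr) + \bigl(\Phi_0^{\Delta_\epsilon} - \Phi_t^{\Delta_\epsilon}\bigr).
\end{equation*}
By Theorem \ref{thm:coupling-pphi-to-gff-eps}, the Gaussian increment $G_\epsilon^{(t)} := \Phi_0^{\GFF_\epsilon} - \Phi_t^{\GFF_\epsilon}$ is independent of $\Phi_t^{\cP_\epsilon}$; its covariance $\int_0^t \dot c_s^\epsilon\, ds$ agrees with that of the full DGFF up to a smooth, $\epsilon$-independent kernel supported on scales $\ge t$. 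By \eqref{eq:phi-delta-h2} and the two-dimensional Sobolev embedding $H^2 \hookrightarrow L^\infty$, $\Phi_t^{\cP_\epsilon}$ has moment bounds in $L^\infty(\Omega_\epsilon)$ uniform in $\epsilon$ for each $t>0$; by \eqref{eq:phi-delta-to-0} with $\alpha$ close to $2$, the last summand vanishes in $L^\infty$-fractional moment as $t\to 0$, uniformly in $\epsilon$.

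Conditioning on $\Phi_t^{\cP_\epsilon}$, the sum $\Phi_t^{\cP_\epsilon} + G_\epsilon^{(t)}$ is a log-correlated Gaussian with DGFF-like covariance (up to a smooth correction) plus a deterministic $L^\infty$-bounded shift. Adapting the DGFF max convergence to log-correlated Gaussians with smooth perturbation of the covariance and a smooth deterministic shift, as is done in Sections 6--7 of \cite{BH2020Max} in the sine-Gordon case, yields conditional convergence of the centred max to $\tfrac{1}{\scaling}X + \tfrac{1}{\scaling}\log\ZDM_t^{\cP} + b$, where $\ZDM_t^{\cP}$ is the critical Liouville-type measure of the perturbed DGFF tilted by $e^{\scaling\Phi_t^{\cP_\epsilon}}$. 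Letting then $t\to 0$, the third summand of the decomposition vanishes uniformly in $\epsilon$, while a consistency argument shows that $\ZDM_t^{\cP}$ converges in distribution to a nondegenerate limit $\ZDM^{\cP}$, namely the critical Gaussian multiplicative chaos tilted by the exponential of the continuum $\cP(\phi)_2$ field at small scale; this tilt is well-defined because $\Phi_t^{\cP}$ has positive Sobolev regularity uniformly in $t$ by \eqref{eq:phi-delta-h1}.

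The main obstacle is the conditional convergence step: the Bramson-Ding-Zeitouni/Biskup-Louidor machinery must be extended to accommodate both (i) a deterministic smooth perturbation of the DGFF covariance and (ii) a random smooth shift of the field, with control on the convergence that is uniform in $\epsilon$ and in the conditioning. The $L^\infty$ bound on $\Phi_t^{\cP_\epsilon}$ coming from \eqref{eq:phi-delta-h2} via Sobolev embedding is exactly what is needed to push the first- and second-moment computations underlying that machinery through, while correctly identifying the tilted Liouville-type measure as the limit. A secondary difficulty is proving that the limit $\ZDM^{\cP}$ does not depend on the auxiliary scale $t>0$; this is handled by the tower property, using that conditional limits at different $t$ must agree on the overlap.
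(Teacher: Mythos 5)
Your proposal takes essentially the same route as the paper: isolate the small-scale Gaussian increment $\Phi_0^\GFF - \Phi_s^\GFF$ and the independent large-scale field $\Phi_s^\cP$ via the coupling, control the remainder $R_s = \Phi_0^\Delta - \Phi_s^\Delta$ using the fractional moment estimates \eqref{eq:phi-delta-to-0} and the Sobolev embedding, transfer Bramson--Ding--Zeitouni machinery to the perturbed setting conditionally on $\Phi_s^\cP$, and then send $s\to 0$. However, the part you yourself flag as the ``main obstacle'' --- how the BDZ/Biskup--Louidor argument is actually extended to the non-Gaussian, non-deterministic shift --- is left as a placeholder, and that is precisely where the bulk of the paper's Section \ref{sec:maximum} lies. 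The paper reduces to the auxiliary field $\tilde\Phi_s$ of \eqref{eq:approximate-independent-decomposition}, decomposes the small-scale Gaussian as a sum of independent Dirichlet free fields on subboxes plus a coarse correction (following \cite[Section 4]{BH2020Max}; note it is Section 4 there, not Sections 6--7), and verifies that the key generalisations of Propositions 5.1 and 5.2 of \cite{MR3433630} (Propositions \ref{prop:maximiser-not-on-grid-phi4} and \ref{prop:fine-field-maximisers-phi4} here) still hold under the weaker fractional-moment regularity of $\Phi^\Delta$, by restricting to the high-probability event $\{\|\Phi_s^\Delta\|_{C^\alpha(\Omega)} < C\}$ where the deterministic H\"older bounds available in the sine-Gordon case are recovered. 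Your outline does not identify this mechanism and would stall at the step where \cite{BH2020Max} used an $L^\infty$-in-probability bound on $\Phi^\Delta$ that is not available here. Also, the ``secondary difficulty'' of $t$-independence of $\ZDM^\cP$ is not handled in the paper by a tower-property consistency argument; it is handled by Lemma \ref{lem:reduction-to-phi-tilde-phi4}, which shows $d(\mu_0,\tilde\mu_s) \lesssim (\sup_\epsilon \E[\|R_s\|_\infty^{2/L}])^{L/(L+2)} \to 0$ in L\'evy metric and establishes tightness of $(\ZDM_s)_s$ via a comparison with the pure GFF maximum. So: same route and same reduction, but the two steps you label as obstacles are genuine gaps that need the specific arguments just described, and your identification of $\ZDM^\cP$ as a tilted critical chaos is stated as a belief in the paper (see the discussion around \eqref{eq:derivative-martingale}), not something the proof delivers.
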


The analogous result for the Gaussian free field was proved in \cite{MR3433630} and later generalised to log-correlated Gaussian fields in \cite{MR3729618},
and to the (non-Gaussian) continuum sine-Gordon field in \cite{MR4399156}. 
The proof in the latter reference relies on a coupling between the Gaussian free field and the sine-Gordon field, 
in essence similar to Theorem \ref{thm:coupling-pphi-to-gff-eps}, and a generalisation of all key result in \cite{MR3433630} to a non-Gaussian regime.
Here we follow a similar strategy to establish Theorem \ref{thm:convergence-to-gumbel} verifying that the technical results in \cite[Section 4]{MR4399156} also hold under the weaker assumptions on the term $\Phi^\Delta$ in Theorem \ref{thm:coupling-pphi-to-gff-eps}.

It is believed that the limiting law of the centred maximum is universal in the sense that if $\Phi^\epsilon$ belongs to a larger class of Gaussian or non-Gaussian log-correlated field,
then the fluctuations of the centred maximum are of order $1$ and moreover, there is a sequence $a_\epsilon$, such that
\begin{equation}
\label{eq:universality-maximum}
\P(\Max \Phi^\epsilon - a_\epsilon \leq x) \to  \E[ \exp(-C\ZDM e^{-cx})], \qquad x\in \R
\end{equation}
for some positive constants $c,C>0$ and a positive random variable $\ZDM$; see for instance \cite{PhysRevE.63.026110}.
The expectation value in \eqref{eq:universality-maximum} is the distribution function of a randomly shifted Gumbel distribution,
which is obtained from the deterministic Gumbel distribution function and averaging over the random shift $\log \ZDM$.
In particular, the weak convergence for $\max_{\Omega_\epsilon}\Phi^{\cP_\epsilon}$ in Theorem \ref{thm:convergence-to-gumbel} can be equivalently stated as in \eqref{eq:universality-maximum} by setting $a_\epsilon = \mathfrak{m}_\epsilon$, $c = \sqrt{8\pi}$, $C= e^{\sqrt{8\pi}b}$ and $\ZDM = \ZDM^{\cP}$.
In recent years there has been substantial progress on the extremal behaviour of log-correlated fields and related models, thus confirming the conjectured behaviour of the maximum.
For Gaussian fields, in particular the discrete Gaussian free field, the the vast majority of questions centred around the maximum have been answered thanks to the works \cite{MR3433630}, \cite{MR3729618}, \cite{MR4089492} as well as \cite{MR3509015}, \cite{MR3787554}.
As various key methods in the proof of these results only apply to Gaussian fields, the picture in the non-Gaussian regime is less complete.
Important recent works on non-Gaussian models include \cite{MR4399156}, \cite{MR4164451},\cite{MR4332224}, \cite{SchweigerZeitouni2022GFFinRE} and \cite{MR3933043}.
There is also a surprising relation between log-correlated processes and the extreme values of characteristic polynomials of certain random matrix ensembles as well as the maximum of the Riemann zeta function in a typical short interval on the critical line,
which was first described and investigated in \cite{108.170601} and \cite{MR3151088}.
Subsequent works in this direction include \cite{MR3594368}, \cite{MR3848391} and \cite{MR3848227}.
For a survey on recent developments we refer to \cite{MR4407477}.

The random variable $\ZDM^{\cP}$ is believed to be a multiple of the critical multiplicative chaos of the field,
also known as the derivative martingale, which can be obtained as the weak limit of 
\begin{equation}
\label{eq:derivative-martingale}
\ZDM^{\cP_\epsilon} = \epsilon^2 \sum_{\Omega_\epsilon} (\frac{2}{\sqrt{2\pi}}\log\frac{1}{\epsilon} -  \Phi^{\cP_\epsilon}) e^{-2\log \frac{1}{\epsilon} + \sqrt{8\pi}\Phi^{\cP_\epsilon}}.
\end{equation}
However, this has been established rigorously only for the massless Gaussian free field thanks to its conformal invariance, see \cite{MR4082182}.
Even though the exact characterisation of $\ZDM^\cP$ does not come out of the proof of Theorem \ref{thm:convergence-to-gumbel}, it does show that $\ZDM^\cP$ is obtained as the limit of prototypical derivative martingales defined similarly to \eqref{eq:derivative-martingale}.

Nonetheless, the coupling in Theorem \ref{thm:coupling-pphi-to-gff-eps} and Corollary \ref{cor:pphi-coupling-continuum} immediately gives a construction of the limit of the measure associated to \eqref{eq:derivative-martingale} as $\epsilon \to 0$ through
\begin{equation}
\lim_{\epsilon \to 0} \ZDM^{\cP_\epsilon}(dx) = e^{\sqrt{8\pi} \Phi_0^{\Delta} (x)}\ZDM^\GFF(x) 
\end{equation}
where $\ZDM^\GFF(dx)$ is the Gaussian multiplicative chaos associated with $\Phi_0^\GFF$.
We expect that this relation allows to establish further properties for the multiplicative chaos of the $\cP(\phi)_2$ field.

Finally, we believe that the coupling in Theorem \ref{thm:coupling-pphi-to-gff-eps} can also be used to prove finer results on the extreme values of the $\cP(\phi)_2$ field,
in particular on the locations and heights of local maxima.
The main reference for the local and full extremal process for the Gaussian free field is the work by Biskup and Louidor, see \cite{MR3509015}, \cite{MR3787554} and \cite{MR4082182}.
Using the coupling in Theorem \ref{thm:coupling-pphi-to-gff-eps} and the generalisation of the key results in these references to the non-Gaussian setting analogously to \cite{Hofstetter2021Extremal}, 
it seems plausible that the local extremal process of the $\cP(\phi)_2$ field converges to a Poisson point process on $\Omega\times \R$ with random intensity measure $\ZDM(dx)\otimes e^{-\sqrt{8\pi} h}dh$.
With additional work, it should also be possible to prove convergence of the full extremal process thanks to the continuity of the difference field.

\subsection{Notation}

For a covariance $c^\epsilon$ on $X_\epsilon$, we denote by $\EE_{c^\epsilon}$ the expectation with respect to the centred Gaussian measure with covariance $c^\epsilon$. 
Moreover, we use the standard Landau big-$O$ and little-$o$ notation and write $f \lesssim g$ to denote that $f\leq O(g)$, and $f \simeq g$ if $f \lesssim g$ and $f\lesssim g$.

When an estimate holds true up to a constant factor depending on a parameter $\alpha$ say, i.e.\ if $f \leq C g$ where the constant $C=C(\alpha)$ depends only on $\alpha$, then we write $f\lesssim_{\alpha} g$.

For fields $f \in X_\epsilon$, we either write $f(x)$ or $f_x$ for the evaluation of $f$ at $x\in \Omega_\epsilon$.
When the field already comes with an index, say $t$, then we write $f_t(x)$ instead. We also use the word function and field interchangeably for elements in $X_\epsilon$.

To simplify notation in what follows we will write
\begin{equation}
\int_{\Omega_\epsilon} f(x) dx \equiv
\epsilon^2 \sum_{x \in \Omega_\epsilon} f(x)
\end{equation}
for the discrete integral over $\Omega_\epsilon$.
For two functions $f,g \in X_\epsilon^\C \equiv  \{\varphi\colon \Omega_\epsilon \to \C\}$ we define the inner product
\begin{equation} \label{e:innerprod}
  \avg{f,g} \equiv \avg{f,g}_{\Omega_\epsilon} \equiv \epsilon^2 \sum_{x \in \Omega_\epsilon} f(x) \overline{ g(x)}.
\end{equation}
For $p\in [1,\infty)$  we also define the $L^p$-norm for $f\in X_\epsilon^\C$ by 
\begin{equation}
\|f\|_{L^p(\Omega_\epsilon)}^p \equiv \| f \|_{L^p}^p = \int_{\Omega_\epsilon} |f(x)|^p dx
\end{equation}
and for $p=\infty$ we define 
\begin{equation}
\|f\|_{L^\infty(\Omega_\epsilon)} \equiv \|f\|_{L^\infty} \equiv \|f\|_{\infty} = \max_{x\in \Omega_\epsilon}|f(x)|.
\end{equation}
We write $L^p(\Omega_\epsilon) = (X_\epsilon, \|\cdot\|_{L^p})$, $1\leq p\leq\infty$ for the discrete $L^p$ space. 
Finally, for $F\colon X_\epsilon \to \R$, $\varphi \mapsto F(\varphi)$ define $\nabla F$ as the unique function $X_\epsilon\to X_\epsilon$ that satisfies
\begin{equation}
\avg{\nabla F(\varphi), g} = D_\varphi F(g) 
\end{equation}
for all $\varphi,g \in X_\epsilon$, where $D_\varphi F  \colon X_\epsilon \to \R$ is the Fr\'echet derivative of $F$ at $\varphi$, i.e.\ the unique bounded linear map satisfying
\begin{equs}
F(\varphi + g) 
= 
F(\varphi) + D_\varphi F(g) + o(g)	
\end{equs}
for all $g \in X_\epsilon$ and where $o(g) = o(\|g\|)$ for some norm on $X_\epsilon$.
Note that our convention for $\nabla F$ differs from the usual gradient by a factor $\epsilon^{-2}$ due to the normalised inner product.

\section{Discrete Besov spaces and the regularity of Wick powers}
\label{sec:discrete-calculus}

In our analysis we require regularity information on various types of random fields on $X_\epsilon$ that are uniform in the regularisation parameter. 
Particularly important examples of such random fields are given by discrete analogues of Wick powers for the GFF. 
Besov spaces allow for a convenient description of the regularity for the latter objects. 
As such, in this section, we first recall basic notions from discrete Fourier analysis and define discrete analogues of Besov spaces. All inequalities in this section will be uniform in $\epsilon$ unless otherwise stated.

\subsection{Discrete Fourier series and trigonometric embedding}
\label{ssec:fourier}

Any function in $X_\epsilon$ admits a Fourier representation thanks to the periodic boundary conditions. Let $\Omega_\epsilon^*=\{ (k_1,k_2) \in 2\pi \Z^2 \colon  -\pi/\epsilon < k_i \leq \pi/\epsilon \}$ and $\Omega^*= 2\pi \Z^2$ be the Fourier dual spaces of $\Omega_\epsilon$ and $\Omega$. Then, for $f\in X_\epsilon$ and $x\in \Omega_\epsilon$,
\begin{equs}
\label{eq:fourier-series-f}
f(x)
=
\sum_{k \in \Omega_\epsilon^*} \hat f(k) e^{ik\cdot x},
\end{equs}
where $\hat f(k)\in \C$ is the $k$-th Fourier coefficient of $f$ given by
\begin{equs}
\label{eq:fourier-coefficient-f}
\hat f (k)
=
\avg{f,e^{ik\cdot} }_{\Omega_\epsilon}
= \epsilon^2 \sum_{x\in \Omega_\epsilon} f(x) e^{-ik\cdot x}.
\end{equs}
Since $f$ is real-valued, we have that $\hat f(k) = \overline{\hat f(-k)}$.
For a given $f\in X_\epsilon$, the map $\cF_\epsilon(f)\colon \Omega_\epsilon^* \to \C$, $k\mapsto \hat f(k)$ is called the (discrete) Fourier transform of $f$.
Similarly, for any function $\hat f \in X_\epsilon^* := \{ g \colon \Omega_\epsilon^* \to \C \}$, we define the inverse Fourier transform $\cF_\epsilon^{-1}$ by
\begin{equs}
\cF_\epsilon^{-1} (\hat f) (x)
=
\sum_{k \in \Omega_\epsilon^*} \hat f(k) e^{ik\cdot x}.
\end{equs}
As for the usual Fourier transform on $\Omega$ and its inverse, which we define analogously to \eqref{eq:fourier-series-f} and \eqref{eq:fourier-coefficient-f} by replacing $\Omega_\epsilon$ by $\Omega$,
it can be easily seen that  $\cF^{-1} \circ \cF = \id_{X_\epsilon^\C}$.

For a translation invariant operator operator $q \colon X_\epsilon \to X_\epsilon$, we denote by $\hat q(k)$, $k\in \Omega_\epsilon^*$ its Fourier multipliers, defined by 
\begin{equs}
\widehat{q (f)}
=
\hat q(k) \hat f(k).
\end{equs}
For instance, it can be shown that the negative lattice Laplacian $-\Delta^\epsilon$ on $\Omega_\epsilon$ has Fourier multipliers
\begin{equation}
\label{eq:fourier-multipliers-discrete-laplacian}
-\hat \Delta^\epsilon(k) = \epsilon^{-2} \sum_{i=1}^2 (2-2\cos(\epsilon k_i)), \quad k \in \Omega_\epsilon^*.
\end{equation}
As $\epsilon\to 0$ these converge to the Fourier multipliers of the continuum Laplacian given by
\begin{equation}
-\hat \Delta (k) = |k|^2, \qquad k \in \Omega^*
\end{equation}
and this convergence is quantified by
\begin{equation}
\label{eq:fourier-multipliers-laplacians-difference}
0\leq -\hat \Delta(k) - (-\hat \Delta^\epsilon(k)) \leq |k|^2 h(\epsilon k), \qquad k \in \Omega_\epsilon^*,
\end{equation}
where $h(x) = \max_{i=1,2} (1 -x_i^{-2}(2-2\cos(x_i)))$ satisfies $h(x) \in [0,1-c]$ with $c = 4/\pi^2$ for $|x|\leq \pi$
and $h(x) =O(|x|^2)$ as $x\to 0$.

We extend functions on $\Omega_\epsilon$ onto $\Omega$ by using the standard trigonometric extension,
which is also an isometric embedding $I_\epsilon \colon L^2(\Omega_\epsilon) \to L^2(\Omega)$,
i.e.\ if $f\in X_\epsilon$ has Fourier series \eqref{eq:fourier-series-f}, then the extension $I_\epsilon f$
of $f$ is the unique function $\Omega\to \R$ whose Fourier coefficients agree with those of $f$ for $k\in \Omega_\epsilon^*$ and vanish for $k\in \Omega^*\setminus \Omega_\epsilon^*$.
Note that $I_\epsilon f$ coincides with $f$ on $\Omega_\epsilon$.

Conversely, we can restrict a smooth function $f\colon \Omega \to \R$ to $\Omega_\epsilon$ by restricting its Fourier series
\begin{equs}
f(x)
=
\sum_{k\in \Omega^*} \hat f(k)e^{ik\cdot x}, \qquad \hat f(k) = \int_{\Omega} f(x)e^{-ik\cdot x} dx
\end{equs}
to Fourier coefficients $k\in \Omega_\epsilon^*$, i.e.\ we write
\begin{equs}
\Pi_\epsilon f
=
\sum_{k\in \Omega_\epsilon^*}\hat f(k)e^{ik\cdot x}.
\end{equs}

\subsection{Discrete Sobolev spaces}
\label{ssec:sobolev}

The notion of Fourier series can be used to define discrete Sobolev norms in complete analogy to the continuum case.
For $\Phi \in X_\epsilon$ and $\alpha \in \R$ we define
\begin{equation}
\label{eq:sobolev-norm-def}
\| \Phi \|_{H^\alpha(\Omega_\epsilon)}^2 \equiv \| \Phi \|_{H^\alpha}^2  = \sum_{k\in \Omega_\epsilon^*} (1+|k|^2)^{\alpha} |\hat \Phi(k)|^2,
\end{equation}
where $\hat \Phi(k)$, $k\in \Omega_\epsilon^*$ denote the Fourier coefficients of $\Phi$ as defined in \eqref{eq:fourier-series-f}. Moreover, we denote by $H^\alpha(\Omega_\epsilon) = (X_\epsilon, \|\cdot \|_{H^\alpha(\Omega_\epsilon)})$ the discrete Sobolev space of regularity $\alpha$. Thus, when using the isometric embedding the discrete and the continuum Sobolev norm $\|\cdot\|_{H^\alpha(\Omega)}$,
defined as in \eqref{eq:sobolev-norm-def} except the sum being now over $k\in \Omega^*$,
coincide, i.e.\ for $\Phi \in X_\epsilon$
\begin{equation}
\|\Phi\|_{H^\alpha(\Omega_\epsilon)} = \| I_\epsilon \Phi \|_{H^\alpha(\Omega)},
\end{equation}
where now $H^\alpha(\Omega)$ denotes the Sobolev space of regularity $\alpha\in \R$.
Recall that we have the following standard embedding theorem.
Here, $\| \cdot \|_{L^p(\Omega)}$ and $\|\cdot \|_{H^s(\Omega)}$ denote the continuum norms for functions $\Omega \to \C$.

\begin{proposition}[Sobolev embeddings]
Let $s \geq 1$. Then
\begin{equs}
\|f\|_{L^p(\Omega)}
\lesssim_s
\|f\|_{H^s(\Omega)},
\end{equs}
where
\begin{equs}
\begin{cases}
2\leq p < \infty,    					&s = 1, \nnb
2\leq p \leq \infty,  				&  s>1.
\end{cases}
\end{equs}

In particular, we have for every $ \alpha > 0$ and $f\colon \Omega_\epsilon \to \C$,
\begin{equs}
 \|f\|_{L^\infty(\Omega_\epsilon)} \leq  
\|I_\epsilon f\|_{L^\infty(\Omega)}
\lesssim_\alpha 
\|I_\epsilon f \|_{H^{1+\alpha}(\Omega)} = \| f \|_{H^{1+\alpha}(\Omega_\epsilon)}  .
\end{equs}
\end{proposition}

Finally, we also have the following H\"older embedding. Let $\alpha \in (0,1)$. For a function $f\in C^\infty(\Omega,\R)$, we define its $\alpha$-H\"older norm by
\begin{equation}
\| f \|_{C^\alpha(\Omega)}  \equiv \| f \|_{C^\alpha} = |f|_{C^\alpha}  + \|f\|_{L^\infty}, \quad  |f|_{C^\alpha(\Omega)}= \sup_{x,y \in \Omega, x\neq y} \frac{|f(x)-f(y)|}{|x-y|^\alpha},
\end{equation}
where $|f|_{C^\alpha}$ denotes the H\"older seminorm.
The $\alpha$-H\"older space, denoted $C^\alpha(\Omega)$, is then defined as the completion of $C^\infty(\Omega,\R)$ with respect to $\| \cdot \|_{C^\alpha(\Omega)}$.
For discrete functions $f\in X_\epsilon$, we define 
\begin{equs}
\| f \|_{C^\alpha(\Omega_\epsilon) }
=
\| I_\epsilon f \|_{C^\alpha(\Omega)}
\end{equs}
and $C^\alpha(\Omega_\epsilon) = (X_\epsilon, \| \cdot \|_{C^\alpha(\Omega_\epsilon)})$.
Then, the following standard embedding from Sobolev spaces into H\"older spaces can be transferred to functions in $X_\epsilon$.
\begin{proposition}
\label{prop:hoelder-sobolev}
Let $\alpha \in (0,1)$ and $s-1 \geq \alpha$. Then 
\begin{equs}
\| f\|_{C^\alpha(\Omega)}
\lesssim_{\alpha, s}
\|f\|_{H^s(\Omega)}.
\end{equs}
\end{proposition}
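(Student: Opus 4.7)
The plan is to argue via the Fourier series on $\Omega=\T^d$, since the embedding $H^s(\Omega) \hookrightarrow C^\alpha(\Omega)$ under $s-d/2\geq \alpha\in(0,1)$ is classical. First I would bound the $L^\infty$ part of the $C^\alpha$-norm: writing $f(x) = \sum_{k\in\Omega^*}\hat f(k)e^{ik\cdot x}$ and applying Cauchy--Schwarz with the weights $(1+|k|^2)^{\pm s/2}$ gives
\begin{equation}
\|f\|_{L^\infty} \leq \sum_{k} |\hat f(k)| \leq \Bigl(\sum_{k\in\Omega^*}(1+|k|^2)^{-s}\Bigr)^{1/2}\|f\|_{H^s},
\end{equation}
and the right-hand sum converges since $2s>d$ (as $s\geq d/2+\alpha$ with $\alpha>0$).

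For the H\"older seminorm in the generic range $s-d/2>\alpha$, I would use the elementary interpolation
\begin{equation}
|e^{ik\cdot x}-e^{ik\cdot y}| \leq \min\bigl(2,|k||x-y|\bigr) \leq 2^{1-\alpha}|k|^\alpha|x-y|^\alpha
\end{equation}
between the trivial bound and the Lipschitz bound, then apply Cauchy--Schwarz termwise:
\begin{equation}
|f(x)-f(y)| \leq 2^{1-\alpha}|x-y|^\alpha \Bigl(\sum_{k}|k|^{2\alpha}(1+|k|^2)^{-s}\Bigr)^{1/2}\|f\|_{H^s},
\end{equation}
which is finite precisely when $2(s-\alpha)>d$. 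Combining with the $L^\infty$-bound yields the claim in the strict regime. The conclusion for $\Omega_\epsilon$ then follows for free from the definition $\|f\|_{C^\alpha(\Omega_\epsilon)}=\|I_\epsilon f\|_{C^\alpha(\Omega)}$ and the isometry $\|f\|_{H^\alpha(\Omega_\epsilon)}=\|I_\epsilon f\|_{H^\alpha(\Omega)}$ noted in Section \ref{ssec:sobolev}, since Fourier coefficients of $I_\epsilon f$ outside $\Omega_\epsilon^*$ vanish.

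The main obstacle is the borderline case $s-d/2=\alpha$, where the last series diverges logarithmically and the pointwise Fourier summation is not quite enough. Here I would pass through a dyadic Littlewood--Paley decomposition $f=\sum_{j\geq -1}\Delta_j f$ on frequency annuli $\{|k|\sim 2^j\}$ and invoke Bernstein's inequality $\|\Delta_j f\|_{L^\infty}\lesssim 2^{jd/2}\|\Delta_j f\|_{L^2}$ to establish the embedding $H^s=B^s_{2,2}\hookrightarrow B^{s-d/2}_{\infty,\infty}$, combined with the standard identification $B^\alpha_{\infty,\infty}=C^\alpha$ for $\alpha\in(0,1)\setminus\N$. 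In all applications in the paper the estimate is used with $s=1+\kappa$, $d=2$ and $\alpha$ strictly smaller than $\kappa$, so the direct Fourier argument above is enough and this technical endpoint can, if desired, be bypassed by a small decrease of $\alpha$.
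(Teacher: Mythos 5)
Your proof is correct. The paper does not supply its own argument for this proposition; it is stated as a standard embedding, so there is nothing in the text to compare against. Your direct Fourier computation — Cauchy--Schwarz with weights $(1+|k|^2)^{\pm s/2}$ for the $L^\infty$ bound, and the pointwise interpolation $|e^{ik\cdot x}-e^{ik\cdot y}|\le \min(2,|k||x-y|)\le 2^{1-\alpha}|k|^\alpha|x-y|^\alpha$ followed by another Cauchy--Schwarz for the seminorm — is a clean, self-contained proof in the strict regime $s-d/2>\alpha$, and you correctly flag that the endpoint $s-d/2=\alpha$, which the stated $\geq$ formally includes, is where the termwise series diverges and one must instead pass through Littlewood--Paley and the chain $H^s=B^s_{2,2}\hookrightarrow B^{s-d/2}_{\infty,2}\hookrightarrow B^{s-d/2}_{\infty,\infty}=C^{s-d/2}$. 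You are also right that in the paper's sole application of this proposition (deducing \eqref{eq:phi-delta-hoelder} from \eqref{eq:phi-delta-1-2}, where $d=2$ and $H^s$ bounds are available for every $s<2$), one can take $s$ strictly between $1+\alpha$ and $2$, so the endpoint is never actually needed. One cosmetic slip: in the final sentence $\|f\|_{H^\alpha(\Omega_\epsilon)}=\|I_\epsilon f\|_{H^\alpha(\Omega)}$ should be the same identity with $s$ in place of $\alpha$.
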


\subsection{Discrete Besov spaces}

In this section we introduce the important class of Besov spaces and recall their most important properties. 
We state the results below for dimension $d=2$ though analogous statements hold for general $d$.
Let $A \coloneqq B_{4/3} \setminus B_{3/8}$ be the annulus of inner radius $r_1 = 3/8$ and outer radius $r_2=4/3$. 
Here, we denote by $B_r = \{ |x| \leq r \} \subset \R^2$ the centred ball of radius $r\geq 0$ in $\R^2$.
Let $\chi, \tilde \chi \in C^\infty_c(\R^2,[0,1])$,  such that 
\begin{equation}
\supp \tilde \chi \subseteq B_{4/3}, \qquad \supp \chi \subseteq A
\end{equation} 
and 
\begin{equation}
\tilde \chi(x) + \sum_{j=0}^\infty \chi(x/2^j) = 1, \qquad x\in \R^2
\end{equation}
and write
\begin{equation}
\label{eq:partition-unity}
\chi_{-1} = \tilde \chi,  \quad \chi_{j} = \chi(\cdot/2^j), \,\,\, j \geq 0.
\end{equation}
For $\epsilon>0$ define $j_\epsilon = \max \{ j \geq -1 \colon  \supp \chi_j  \subset (-\pi/\epsilon, \pi/\epsilon]^2 \}$.
Note that for $j \geq j_\epsilon$, $\supp \chi_j$ may intersect $\partial \{ [-\pi/\epsilon, \pi/\epsilon]^2 \}$.
To avoid ambiguities with the periodisation of $\chi_j$ onto $\Omega_\epsilon^*$, we modify our dyadic partition of unity in \eqref{eq:partition-unity} as follows:
for $j \in \{ -1, \ldots, j_\epsilon\}$ let $\chi_j^\epsilon \in C^\infty_c(\R^2, [0,1])$ be such that for $k \in \Omega_\epsilon^*$ we have
\begin{equation}
\chi_j^\epsilon (k)
=
\begin{cases}
\chi_j(k), \,\qquad & j < j_\epsilon,
\\ 
1- \sum_{j < j_{\epsilon}} \chi_j^\epsilon(k), \,\qquad & j = j_\epsilon,
\\
0, \,\qquad & j >j_\epsilon.
\end{cases}	
\end{equation}
Then we define for $j\geq -1$ the $j$-th Fourier projector $\Delta_j$ by
\begin{equation}
\Delta_j f = \cF^{-1} (\chi_j^\epsilon \hat f),
\end{equation}
where $\hat f \colon \Omega_\epsilon^* \to \C, \, k\mapsto \hat f(k)$ is the Fourier transform of $f$.
We can use this partition of unity to decompose a given function $f\in X_\epsilon$ into a sum of functions with almost disjoint support in Fourier space and define for any $\alpha \in \R$ and $p,q \in [1,\infty]$
\begin{equation}
\label{eq:definition-besov-norm}
\| f \|_{\besov{p}{q}{\alpha}} \coloneqq \Big[ \sum_{j=-1}^\infty \big( 2^{\alpha k} \|\Delta_j f \|_{L^p}  \big)^q  \Big]^{1/q}.
\end{equation}
One can prove that this is indeed a norm on $X_\epsilon$
and that different choice of $\tilde \chi, \chi$ yield equivalent norms uniformly in $\epsilon >0$. 

We denote by $\besov{p}{q}{\alpha}(\Omega_\epsilon) = (X_\epsilon, \| \cdot \|_{\besov{p}{q}{\alpha}})$ the discrete Besov space with parameters $p$,$ q$ and $\alpha$.
Note that $H^\alpha(\Omega_\epsilon) = \besov{2}{2}{\alpha}(\Omega_\epsilon)$ 
which holds in the sense that the norms are equivalent uniformly in the lattice spacing. 
Moreover, for any $\alpha \in \R$, we write $\cC^\alpha(\Omega_\epsilon)\coloneqq \besov{\infty}{\infty}{\alpha}(\Omega_\epsilon)$. 

We now state useful properties of these spaces. In all estimates below we use $\lesssim$ to denote less than or equal to up to a constant that may depend on the parameters of the Besov space but does not depend on $\epsilon$.
For the proofs of these result, we refer to
\cite[Remark 8, 10 and 11]{MR3693966}	   
\cite[Lemma A.2]{MR4252872},  
\cite[Theorem 2.13]{BarashkovDeVecchi2021Elliptic},	   
\cite[Lemma A.3]{MR4252872},   
and \cite[Theorem 2.14]{BarashkovDeVecchi2021Elliptic}   
respectively.

\begin{proposition}[Immediate embeddings]
\label{prop:immediate-embeddings}
Let $\alpha_1, \alpha_2 \in \R$ and $p_1,p_2,q_1, q_2 \in [1,\infty]$. Then we have
\begin{align}
\label{eq:besov-immediate-1}
\|u \|_{\besov{p_1}{q_1}{\alpha_1}} &\lesssim \| u \|_{\besov{p_2}{q_2}{\alpha_2}}  && \text{~for~} \alpha_1 \leq \alpha_2 \text{~and~} p_1 \leq p_2 \text{~and~} q_1 \geq q_2, \\
\label{eq:besov-immediate-2}
\|u \|_{\besov{p_1}{q_1}{\alpha_1}}  &\lesssim \| u \|_{B_{p_1,\infty}^{\alpha_2}} &&\text{~for~} \alpha_1< \alpha_2, \\
\label{eq:besov-immediate-3}
\|u\|_{\besov{p_1}{\infty}{0}} &\lesssim \| u \|_{L^{p_1}} \lesssim \| u \|_{\besov{p_1}{1}{0}}.
\end{align}
\end{proposition}

\begin{proposition}[Duality]
\label{prop:duality}
Let $\alpha_1,\alpha_2 \in \R$ such that $\alpha_{1}+\alpha_{2}=0$ and let $p_1,p_2,q_1,q_2 \in [1,\infty]$ such that $\frac{1}{p_{1}}+\frac{1}{p_{2}}=\frac 1{q_1} + \frac 1{q_2} = 1$.
Then
  \begin{equs}
  \label{eq:besov-duality}
   \Big| \int_{\Omega_{\epsilon}} f g dx \Big|
    \leq 
    \|f\|_{\besov{p_1}{q_1}{\alpha_1} }\|g\|_{\besov{p_2}{q_2}{\alpha_2}}.
  \end{equs}
\end{proposition}

\begin{proposition}[Multiplication inequality]
\label{prop:besov-products}
Let $p,p_{1},p_{2},q,q_1,q_2\in[1,\infty]$ be such that $\frac{1}{p}=\frac{1}{p_{1}}+\frac{1}{p_{2}}$, $\frac 1q = \frac 1{q_1} + \frac 1{q_2}$, and let $\alpha_{1},\alpha_{2}\in \mathbb{R}$ be such that $\alpha_{1}+\alpha_{2}>0$.  Denote $\alpha=\min(\alpha_{1},\alpha_{2})$. Then, for any $\epsilon > 0$ and for all $f,g \in X_\epsilon$, 
\begin{equs} 
\label{eq:besov-multiplication-inequality}
\| fg \|_{\besov{p}{q}{\alpha}}
\lesssim
\|f\|_{\besov{p_1}{q_1}{\alpha_1}} \|g\|_{\besov{p_2}{q_2}{\alpha_2}}.
\end{equs}
In particular, for $\alpha > 0$ we obtain the following iterated multiplication inequality: for every $k\in \N$ such that $k \geq 1$,
\begin{equation}
\|f^k\|_{\besov{p}{q}{\alpha}} \lesssim \| f \|_{\besov{kp}{kq}{\alpha}}^k.
\end{equation}
\end{proposition}

\begin{proposition}[Interpolation]
\label{prop:besov-interpolation}
Let $\theta \in [0,1]$, $p,p_{1},p_{2},q,q_1,q_2\in [1,\infty]$ satisfying $\frac{1}{p}=\frac{\theta}{p_{1}}+\frac{1-\theta}{p_{2}}$, $\frac 1q = \frac\theta{q_1} + \frac{1-\theta}{q_2}$,
and $\alpha,\alpha_{1},\alpha_{2}\in \R$ satisfying $\alpha=\theta \alpha_{1}+(1-\theta)\alpha_{2}$.
Then, for any $\epsilon > 0$ and $f \in X_\epsilon$,
\begin{equs}
\label{eq:besov-interpolation}
\|f\|_{\besov{p}{q}{\alpha}}
\lesssim 
\|f\|^{\theta}_{\besov{p_1}{q_1}{\alpha_1}} \|f\|_{\besov{p_2}{q_2}{\alpha_2}}^{1-\theta}.
\end{equs}
\end{proposition}

\begin{proposition}[Besov embedding]
\label{prop:besov-embedding}
Let $\alpha_{1},\alpha_{2}\in \RR$ and $p_{1},p_{2},q\in [1,\infty]$. Assume that $\alpha_{2} \leq \alpha_{1}-2\big( \frac{1}{p_{1}}-\frac{1}{p_{2}}\big)$. Then, for any $\epsilon > 0$ and $f \in X_\epsilon$,
\begin{equs}
\label{eq:besov-embedding}
\|f\|_{\besov{p_2}{q}{\alpha_2}} 
\lesssim 
\|f\|_{\besov{p_1}{q}{\alpha_1}} .
\end{equs}
\end{proposition}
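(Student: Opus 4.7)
The statement is the standard Besov embedding transposed to the discrete setting, and the natural approach is to follow the classical proof based on a Bernstein inequality for the dyadic blocks $\Delta_k$, taking care that all estimates are uniform in $\epsilon$. I would split the argument into a Bernstein step, a block-wise absorption step, and a summation step.

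\textbf{Step 1 (uniform discrete Bernstein).} I would first show that for every $k \in \{-1, 0, \dots, j_\epsilon\}$ and any $1 \le p \le q \le \infty$,
\begin{equation}
\|\Delta_k f\|_{L^q(\Omega_\epsilon)} \lesssim 2^{kd\big(\frac{1}{p}-\frac{1}{q}\big)} \|\Delta_k f\|_{L^p(\Omega_\epsilon)},
\end{equation}
with a constant independent of $\epsilon$. To see this, write $\Delta_k f = K_k^\epsilon \ast_\epsilon f$ where $K_k^\epsilon = \cF_\epsilon^{-1}(\chi_k^\epsilon)$ and $\ast_\epsilon$ is the discrete convolution on $\Omega_\epsilon$. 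For $k < j_\epsilon$ the cutoff $\chi_k^\epsilon$ coincides with $\chi_k = \chi(\cdot / 2^k)$, so $K_k^\epsilon$ is the periodisation on $\Omega_\epsilon$ of the scaled Schwartz function $2^{kd} K_0(2^k \cdot)$, and $\|K_k^\epsilon\|_{L^r(\Omega_\epsilon)} \lesssim 2^{kd/r'}$ uniformly in $\epsilon$, where $r$ is conjugate via $1/r = 1 + 1/q - 1/p$. Young's inequality on the discrete torus then yields the desired bound. The boundary index $k = j_\epsilon$ is handled separately but identically, since $\supp \chi_{j_\epsilon}^\epsilon \subset (-\pi/\epsilon, \pi/\epsilon]^d$ is still contained in a ball of radius $\sim 2^{j_\epsilon} \simeq 1/\epsilon$, so the scaling of $K_{j_\epsilon}^\epsilon$ is of the same order.

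\textbf{Step 2 (block-wise absorption).} Assume first $p_1 \le p_2$. Applying Step 1 to $\Delta_k f$ gives
\begin{equation}
\|\Delta_k f\|_{L^{p_2}} \lesssim 2^{kd(1/p_1 - 1/p_2)} \|\Delta_k f\|_{L^{p_1}}.
\end{equation}
Multiplying by $2^{\alpha_2 k}$ and invoking the hypothesis $\alpha_2 + d(1/p_1 - 1/p_2) \le \alpha_1$ (with $d=2$) yields
\begin{equation}
2^{\alpha_2 k} \|\Delta_k f\|_{L^{p_2}} \lesssim 2^{\alpha_1 k} \|\Delta_k f\|_{L^{p_1}}.
\end{equation}
In the residual case $p_1 > p_2$, Jensen's inequality on the fixed-measure torus $\Omega_\epsilon$ gives the even simpler bound $\|\Delta_k f\|_{L^{p_2}} \lesssim \|\Delta_k f\|_{L^{p_1}}$, and the hypothesis again absorbs the dyadic weights.

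\textbf{Step 3 (sequence embedding).} Set $a_k = 2^{\alpha_1 k} \|\Delta_k f\|_{L^{p_1}}$. Since $p_1 \le p_2$ implies $\ell^{p_1} \hookrightarrow \ell^{p_2}$ for sequences indexed by $k \ge -1$, Step 2 gives
\begin{equation}
\|f\|_{\besov{p_2}{p_2}{\alpha_2}} = \Big(\sum_k (2^{\alpha_2 k}\|\Delta_k f\|_{L^{p_2}})^{p_2}\Big)^{1/p_2} \lesssim \Big(\sum_k a_k^{p_2}\Big)^{1/p_2} \le \Big(\sum_k a_k^{p_1}\Big)^{1/p_1} = \|f\|_{\besov{p_1}{p_1}{\alpha_1}}.
\end{equation}
The case $p_1 > p_2$ is handled analogously, using the reverse direction of the sequence embedding combined with the trivial bound from Step 2.

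\textbf{Main obstacle.} The only non-routine point is Step 1: ensuring that the kernel estimate $\|K_k^\epsilon\|_{L^r(\Omega_\epsilon)} \lesssim 2^{kd/r'}$ is truly uniform in $\epsilon$. This requires controlling the periodisation of $2^{kd} K_0(2^k \cdot)$ on $\Omega_\epsilon$ in the regime $2^k \lesssim 1/\epsilon$, and separately handling the modified top dyadic block indexed by $j_\epsilon$, which differs from a clean rescaling. Once this discrete Young/Bernstein estimate is secured, Steps 2 and 3 are immediate from the hypothesis and the monotonicity of $\ell^p$-norms in the sequence index.
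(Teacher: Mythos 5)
The paper does not give a proof of this proposition; it cites \cite[Lemma A.3]{1810.01700} and companions, so there is no internal proof to compare against. On its own merits, your argument is the standard Bernstein-plus-$\ell^p$-embedding route for $p_1 \le p_2$, and in that regime it is correct (modulo a small slip in Step~1: writing $\Delta_k f = K_k^\epsilon \ast_\epsilon f$ and applying Young gives $\|\Delta_k f\|_{L^q} \lesssim \|K_k^\epsilon\|_{L^r}\|f\|_{L^p}$, which controls $\|f\|_{L^p}$ rather than $\|\Delta_k f\|_{L^p}$; the usual fix is to insert a fattened cutoff $\tilde\chi_k$ with $\tilde\chi_k \equiv 1$ on $\supp\chi_k$, so $\Delta_k f = \cF^{-1}(\tilde\chi_k) \ast \Delta_k f$, and apply Young to that identity).

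The genuine gap is in your treatment of the residual case $p_1 > p_2$. First, the phrase ``reverse direction of the sequence embedding'' refers to something that does not exist: $\ell^{p_1} \hookrightarrow \ell^{p_2}$ holds only for $p_1 \le p_2$, and for $p_1 > p_2$ the inequality goes the other way, so it cannot close the sum in Step~3. Second, and more fundamentally, the block-wise absorption in Step~2 fails: Jensen on the unit torus indeed gives $\|\Delta_k f\|_{L^{p_2}} \lesssim \|\Delta_k f\|_{L^{p_1}}$, but then you need $2^{\alpha_2 k} \lesssim 2^{\alpha_1 k}$, i.e.\ $\alpha_2 \le \alpha_1$, which the hypothesis does \emph{not} guarantee when $p_1 > p_2$ — the hypothesis then reads $\alpha_2 \le \alpha_1 + 2(1/p_2 - 1/p_1)$, allowing $\alpha_2 > \alpha_1$. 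In fact the embedding is simply false in that regime with constants uniform in $\epsilon$. Taking $p_1 = \infty$, $p_2 = 2$, $\alpha_1 = 0$, $\alpha_2 = 1$ (so the displayed condition $\alpha_2 \le \alpha_1 + 2(1/2)$ holds with equality) would give $\cC^0 \hookrightarrow H^1$ on $\T^2$, and a sum of modulated bumps $\sum_j e^{i2^j x_1} g(x)$ localized in consecutive dyadic annuli has bounded $\cC^0$ norm but $H^1$ norm of order $2^{j_\epsilon} \sim 1/\epsilon$. The resolution is that the proposition, like its source \cite[Lemma A.3]{1810.01700}, should be read with the standard implicit restriction $p_1 \le p_2$; your Steps~1--3 then cover the full statement, and the ``residual case'' should be dropped rather than waved through.
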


\subsection{Regularity estimates on discrete Wick powers}

The relevance of Besov norms in our context comes from the fact that in dimension $d=2$ the Wick powers of log-correlated fields are distributions in $\besov{p}{p}{-\kappa}$ for any $\kappa>0$.  

\begin{lemma}
\label{lem:wick-powers-besov-expectation}
Let $Y_\infty^\epsilon$ be the discrete Gaussian free field on $\Omega_\epsilon$. Then, for any $\kappa>0$ and $ p \in [1,\infty)$,
\begin{equation}
\label{eq:wick-power-besov-expectation}
\sup_{\epsilon > 0} \E \big [\| \wick{(Y_\infty^\epsilon)^n} \|_{\besov{p}{p}{-\kappa}}^p\big] <\infty.
\end{equation}
Consequently, by Besov embedding \eqref{eq:besov-embedding}, for any $n \in \N$, $\kappa>0$, and $r > 0$,
\begin{equs}
\sup_{\epsilon>0} \E\big[\|\wick{(Y_\infty^\epsilon)^n} \|^{r}_{\cC^{-\kappa}}\big]
<
\infty.
\end{equs}
\end{lemma}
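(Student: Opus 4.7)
The plan is to reduce the Besov-norm estimate \eqref{eq:wick-power-besov-expectation} to an $L^2$ bound on each Littlewood--Paley block. Each block $\Delta_k \wick{(Y_\infty^\epsilon)^n}(x)$ lies in the $n$-th Wiener chaos generated by $Y_\infty^\epsilon$, so Gaussian hypercontractivity gives
$\E[|\Delta_k \wick{(Y_\infty^\epsilon)^n}(x)|^p]^{1/p} \lesssim_{n,p} \E[|\Delta_k \wick{(Y_\infty^\epsilon)^n}(x)|^2]^{1/2}$.
Combined with Fubini, this reduces the task to bounding the second moment by an at-most polynomial function of $k$, uniformly in $\epsilon$ and $x \in \Omega_\epsilon$.

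Next I would compute the second moment in Fourier space using the orthogonality of Wick powers, $\E[\wick{Y_\infty^\epsilon(y)^n}\wick{Y_\infty^\epsilon(z)^n}] = n!\, G_\epsilon(y-z)^n$, where $G_\epsilon$ is the Green's function of $-\Delta^\epsilon + m^2$. Translation invariance in $x$ then yields
\begin{equs}
\E[|\Delta_k \wick{(Y_\infty^\epsilon)^n}(x)|^2]
\;\lesssim\; \sum_{k' \in \Omega_\epsilon^*} |\chi_k^\epsilon(k')|^2 \, \widehat{G_\epsilon^n}(k').
\end{equs}
Since $\chi_k^\epsilon$ is supported in an annulus $\{|k'| \sim 2^k\}$ containing $O(2^{2k})$ points of $\Omega_\epsilon^*$, it suffices to establish the polylogarithmic decay $\widehat{G_\epsilon^n}(k') \lesssim_n (1+|k'|)^{-2}(\log(2+|k'|))^{n-1}$ uniformly in $\epsilon$. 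This gives $\E[|\Delta_k \wick{(Y_\infty^\epsilon)^n}(x)|^2] \lesssim_n (k \vee 1)^{n-1}$, and summation produces
\begin{equs}
\E\big[\|\wick{(Y_\infty^\epsilon)^n}\|_{\besov{p}{p}{-\kappa}}^p\big] \;\lesssim_{n,p,\kappa}\; \sum_{k \geq -1} 2^{-\kappa k p}(k \vee 1)^{p(n-1)/2} \;<\; \infty.
\end{equs}
For the corollary in $\cC^{-\kappa}$, given $\kappa, r > 0$ I would choose $p$ with $p > r$ and $2/p < \kappa/2$, apply the Besov embedding to pass from $\besov{p}{p}{-\kappa/2}$ into $\besov{\infty}{\infty}{-\kappa} = \cC^{-\kappa}$, and then extract the $r$-th moment by Jensen's inequality.

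The main obstacle is the polylogarithmic decay of $\widehat{G_\epsilon^n}$ uniformly in $\epsilon$. In the continuum this follows by induction on $n$ from $\widehat{G^n} = \hat G * \widehat{G^{n-1}}$, since the standard two-dimensional convolution estimate for $(m^2 + |k|^2)^{-1}$ produces exactly one extra $\log(2+|k|)$ factor at each step. On the lattice I would deduce the analogous bound from \eqref{eq:fourier-multipliers-laplacians-difference}: because $h \leq 1 - 4/\pi^2$ on $(-\pi,\pi]^2$ one has $-\hat\Delta^\epsilon(k') \geq c|k'|^2$ with an $\epsilon$-independent $c > 0$, hence $\hat G_\epsilon(k') \lesssim (m^2 + |k'|^2)^{-1}$, and the continuum convolution bound transfers verbatim to the discrete setting. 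The truncation at $|k'| \lesssim \epsilon^{-1}$ imposed by the cutoff $j_\epsilon$ in the partition of unity is harmless, since only modes with $|k'| \lesssim 2^k \leq 2^{j_\epsilon}$ ever contribute to the $k$-th block.
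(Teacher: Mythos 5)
Your reduction via stationarity, hypercontractivity, and Fubini to a second-moment bound on each Littlewood--Paley block is exactly the paper's first step. Where you diverge is in how you bound $\E[|\Delta_j\wick{(Y_\infty^\epsilon)^n}(0)|^2]$: you pass to Fourier space, write it as $\sum_{k'}|\chi_j^\epsilon(k')|^2\widehat{G_\epsilon^n}(k')$, and establish the polylogarithmic decay $\widehat{G_\epsilon^n}(k')\lesssim(1+|k'|)^{-2}(\log(2+|k'|))^{n-1}$ inductively via the convolution $\widehat{G^n}=\hat G*\widehat{G^{n-1}}$. The paper instead stays in real space, writes the second moment as $n!\int K_j(x)K_j(y)c^\epsilon(x-y)^n\,dxdy$, and applies H\"older with exponents $q_1$ close to $1$ and conjugate $q_2<\infty$: interpolating the kernel bounds $\|K_j\|_{L^1}\lesssim1$ and $\|K_j\|_{L^\infty}\lesssim2^{2j}$ gives $\|K_j\|_{L^{q_1}}\lesssim2^{j\kappa/2}$, while $\sup_\epsilon\|(c^\epsilon)^n\|_{L^{q_2}}<\infty$ follows directly from the at-most-logarithmic singularity of the $2$D Green's function. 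The paper's route is shorter and avoids any Fourier-side convolution analysis; yours is sharper in that it extracts the precise $(\log)^{n-1}$ decay of $\widehat{G^n}$, at the price of the inductive convolution estimate. Both give a block bound that is summable against $2^{-j\kappa p}$.

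One point in your argument is slightly glossed over. When you claim the continuum convolution bound transfers ``verbatim'' to the lattice, note that on $\Omega_\epsilon^*$ the convolution defining $\widehat{G_\epsilon^n}$ is taken modulo the dual lattice $(2\pi/\epsilon)\Z^2$, since $c^\epsilon$ is periodic on $\Omega_\epsilon$. So tuples $(k_1,\dots,k_n)$ with $k_1+\dots+k_n=k'+2\pi\epsilon^{-1}m$, $m\in\Z^2\setminus\{0\}$, also contribute. Your remark about the cutoff $j_\epsilon$ only controls the support of $\chi_j$, not these wrap-around terms. They are in fact negligible --- each forces at least one $|k_i|\gtrsim\epsilon^{-1}$, which kills a factor $\epsilon^2$ --- but this needs to be said. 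The paper's real-space argument sidesteps the issue entirely, which is part of why it is cleaner.
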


\begin{proof}
Throughout this proof we drop $\epsilon>0$ from the notation.
By the definition of the Besov norms, we have
\begin{equs} \label{eq:reg-wick-1} 
\begin{split}
\E \big[\| \wick{Y_\infty^{n}} \|_{\besov{p}{p}{-\kappa}}^{p} \big]
&=
\sum_{j=-1}^{\infty} 2^{-j \kappa p} \E \big[\|\Delta_{j}\wick{Y_\infty^{n}} \|^{p}_{L^{p}}\big]
\\
&\lesssim      
\sum_{j=-1}^{\infty}2^{-j \kappa p} \Big(\E\big[\big|\big(\Delta_{j}\wick{Y_\infty^{n}}\big)(0)\big|^{2}\big]\Big)^{p/2} 
\\
&= 
\sum_{j=-1}^{\infty}2^{-j \kappa p}\Big(\E \Big[\int_{\Omega_\epsilon\times\Omega_\epsilon} K_j(x)K_j(y)\wick{Y_\infty^{n}(x)}\wick{Y_\infty^{n}}(y) dx dy \Big] \Big)^{p/2}
\\
&=
\sum_{j=-1}^{\infty}2^{-j \kappa p} \Big(\int_{\Omega_\epsilon\times\Omega_\epsilon} K_j(x)K_j(y) \big( c(x,y)\big)^{n} dx dy \Big)^{p/2},
\end{split}
\end{equs}
where in the second line, we use stationarity together with a standard Wiener chaos estimate (which is a consequence of hypercontractivity),
in the third line, we denote the (real-space) kernel of $\Delta_j$ by $K_j=K_j^\epsilon$, and in the final line we use Wick's theorem. 
  
Recall that we have $\|K_j\|_{L^{1}(\Omega_\epsilon)}\lesssim 1$ and $\|K_j\|_{L^{\infty}(\Omega_\epsilon)}\lesssim 2^{2j}$.
This implies, by interpolation, that $\|K_j\|_{L^{q_{1}}(\Omega_\epsilon)}\lesssim 2^{j \kappa/2}$ if $q_{1}>1$ is chosen sufficiently close to $1$.
Observe that if $q_{2}<\infty$ is the H\"older conjugate of $q_{1}$, then $\sup_{\epsilon>0}\|c^n\|_{L^{q_{2}}(dxdy)}<\infty$. Thus,
  \begin{equs}\label{eq:reg-wick-2}
    \Big| \int K_{j}(x)K_{j}(y)c^{n}(x,y) dxdy \Big|
     &\leq 
     \|K_{j}(x)K_{j}(y)\|_{L^{q_{1}}(dxdy)}\|c^{n}\|_{L^{q_{2}}(dxdy)} \\
     &\leq 
     \|K_{j}\|^{2}_{L^{q_{1}}}\|c^{n}\|_{L^{q_{2}}(dxdy)}\lesssim 2^{j\kappa}.
  \end{equs}
  Plugging \eqref{eq:reg-wick-2} into the sum \eqref{eq:reg-wick-1}, we see that it converges and is bounded uniformly in $\epsilon$. This completes the proof.
\end{proof}

In the sequel we consider the Wick powers of the sum of the Gaussian free field and a regular field. 
Note that for $n \in \N$ and $\varphi \in X_\epsilon$ we have
\begin{equation}
\label{eq:y-infty-wick-binomial}
  \wick{(Y_\infty^\epsilon+\varphi)^n}
  =
  \sum_{k=0}^{n} {n \choose k} \wick{(Y_\infty^\epsilon)^{n-k}} \varphi^{k}.
\end{equation}
In practice, we apply this formula in the case where $\varphi$ admits a bound uniform in $\epsilon>0$  on the second moment of some positive regularity Sobolev norm.
In particular, it is mainly useful when $\varphi$ admits estimates in a positive regularity norm (rather than a negative regularity distribution norm) that are uniform in $\epsilon>0$. 

\begin{lemma} 
\label{lem:wick-varphi-estimate}
Let $Y_\infty^\epsilon$ be the discrete Gaussian free field on $\Omega_\epsilon$.
Then for any $n \in \N$ and $\kappa > 0$, and $\bar \kappa >0$ small enough
\begin{equation}
\label{eq:wick-gff-varphi-estimate}
\|\wick{(Y_\infty^\epsilon+\varphi)^{n}}\|_{\cC^{-\kappa}}
 \lesssim 
1 + \sum_{k=0}^{n-1}\| \wick{(Y_\infty^\epsilon)^{n-k}}\|_{\cC^{-\bar \kappa}}^{n/(n-k)} + \|\varphi\|_{H^{1}}^n
\end{equation}
uniformly in $\epsilon > 0$. 
\end{lemma}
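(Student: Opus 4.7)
The plan is to expand $\wick{(Y_\infty^\epsilon+\varphi)^n}$ by the binomial identity \eqref{eq:y-infty-wick-binomial}, apply the triangle inequality, and bound each of the $n+1$ resulting summands $\|\wick{(Y_\infty^\epsilon)^{n-k}}\varphi^k\|_{\cC^{-\kappa}}$ by the right-hand side of \eqref{eq:wick-gff-varphi-estimate}. The delicate issue is that in $d=2$ the embedding $H^1\hookrightarrow L^\infty$ fails, so a naive application of the multiplication inequality directly in $\cC^{-\kappa}=\besov{\infty}{\infty}{-\kappa}$ would demand an $L^\infty$-type control on $\varphi^k$ that is not available from $\|\varphi\|_{H^1}$ alone. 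To remedy this, I would trade Besov regularity for integrability by passing through mixed spaces $\besov{p}{p}{\alpha}$ with $p$ finite but large.

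First I would fix $\bar\kappa\in(0,\kappa/2)$ and choose auxiliary exponents $s\in(\bar\kappa,1)$, $r\in(1,2)$ close to $2$, and $p\in[2/(\kappa-\bar\kappa),2/\bar\kappa)$ consistently with the numerical constraints that appear below. For each $0\le k<n$, I would combine the Besov embedding $\besov{p}{p}{-\bar\kappa}\hookrightarrow\cC^{-\bar\kappa-2/p}\hookrightarrow\cC^{-\kappa}$ with the multiplication inequality of Proposition~\ref{prop:besov-products}, applied with $(p_1,p_2)=(\infty,p)$ and $(\alpha_1,\alpha_2)=(-\bar\kappa,s)$, to obtain
\begin{equation*}
\|\wick{(Y_\infty^\epsilon)^{n-k}}\varphi^k\|_{\cC^{-\kappa}}
\lesssim
\|\wick{(Y_\infty^\epsilon)^{n-k}}\varphi^k\|_{\besov{p}{p}{-\bar\kappa}}
\lesssim
\|\wick{(Y_\infty^\epsilon)^{n-k}}\|_{\cC^{-\bar\kappa}}\,\|\varphi^k\|_{\besov{p}{p}{s}}.
\end{equation*}
To control $\|\varphi^k\|_{\besov{p}{p}{s}}$ using only the $H^1$-norm of $\varphi$, I would first derive $\|\varphi^k\|_{W^{1,r}}\lesssim\|\varphi\|_{H^1}^k$ via the chain rule $\nabla(\varphi^k)=k\varphi^{k-1}\nabla\varphi$, H\"older's inequality with conjugate exponents $(2,2r/(2-r))$, and the 2D Sobolev embedding $H^1\hookrightarrow L^q$ for every $q<\infty$. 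A standard Littlewood--Paley argument based on Bernstein's inequality $\|\Delta_j f\|_{L^r}\lesssim 2^{-j}\|\nabla f\|_{L^r}$ then gives $\|\cdot\|_{\besov{r}{r}{s'}}\lesssim \|\cdot\|_{W^{1,r}}$ for any $s'<1$, and the Besov embedding $\besov{r}{r}{s'}\hookrightarrow\besov{p}{p}{s}$ can be arranged under the admissible condition $s\le s'-2/r+2/p$, producing $\|\varphi^k\|_{\besov{p}{p}{s}}\lesssim\|\varphi\|_{H^1}^k$. A final application of Young's inequality with conjugate exponents $n/(n-k)$ and $n/k$ separates the two factors into the two kinds of summands appearing in \eqref{eq:wick-gff-varphi-estimate}. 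The leftover $k=n$ term is handled more directly: $\|\varphi^n\|_{\cC^{-\kappa}}\lesssim\|\varphi^n\|_{\besov{p}{p}{-\kappa+2/p}}\lesssim\|\varphi^n\|_{L^p}=\|\varphi\|_{L^{np}}^n\lesssim\|\varphi\|_{H^1}^n$, using the trivial inclusion $L^p\hookrightarrow\besov{p}{p}{-\kappa+2/p}$ (valid once $-\kappa+2/p<0$) together with 2D Sobolev.

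The main obstacle is precisely the absence of an $L^\infty$-bound for $H^1$-functions in $d=2$, which rules out a cleaner argument working entirely in H\"older scales. Once this is resolved by routing through the mixed $\besov{p}{p}{\cdot}$ spaces, the remainder is careful bookkeeping on the exponents $(\bar\kappa,s,r,p)$; in particular the constraint $\bar\kappa<\kappa/2$ is what the ``$\bar\kappa$ small enough'' hypothesis in the statement is accommodating.
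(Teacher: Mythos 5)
Your proposal is correct in outline and shares the overall skeleton of the paper's argument: expand via the binomial identity \eqref{eq:y-infty-wick-binomial}, estimate each cross term $\wick{(Y_\infty^\epsilon)^{n-k}}\varphi^k$ by combining Besov embedding with Proposition~\ref{prop:besov-products} to split off a factor $\|\wick{(Y_\infty^\epsilon)^{n-k}}\|_{\cC^{-\bar\kappa}}$, reduce a positive-regularity Besov norm of $\varphi^k$ to $\|\varphi\|_{H^1}^k$, and finish with Young's inequality with exponents $n/(n-k)$ and $n/k$. The $k=n$ term and the choice $\bar\kappa<\kappa/2$ also match. Where you genuinely diverge is the middle step: to pass from $\|\varphi^k\|_{\besov{p}{p}{s}}$ to $\|\varphi\|_{H^1}^k$, the paper iterates the Besov multiplication inequality (Proposition~\ref{prop:besov-products}) $k$ times to get $\|\varphi^k\|_{\besov{p}{\infty}{\kappa-2/p+\delta}}\lesssim\|\varphi\|_{\besov{kp}{\infty}{\kappa-2/p+\delta}}^k$ and then applies the Besov embedding $H^1\hookrightarrow\besov{kp}{\infty}{\kappa-2/p+\delta}$ (under $p\le 2(1+1/k)/(\kappa+\delta)$), staying entirely within the Besov toolbox that has been set up with $\epsilon$-uniform constants. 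You instead route through $W^{1,r}$ using the chain rule $\nabla(\varphi^k)=k\varphi^{k-1}\nabla\varphi$, H\"older, 2D Sobolev, and a Bernstein-based embedding $W^{1,r}\hookrightarrow\besov{r}{r}{s'}$.

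This is a valid alternative in the continuum, but since the lemma must hold uniformly in $\epsilon>0$ for the discrete fields on $\Omega_\epsilon$, it imports tools that the paper has not established in the discrete setting. In particular, the identity $\nabla(\varphi^k)=k\varphi^{k-1}\nabla\varphi$ is not exact for the discrete gradient (the discrete Leibniz rule carries correction terms), and the trigonometric extension $I_\epsilon$ does not commute with taking powers, so you cannot simply transfer the continuum chain rule via $I_\epsilon$. Likewise, $W^{1,r}(\Omega_\epsilon)\hookrightarrow\besov{r}{r}{s'}(\Omega_\epsilon)$ with an $\epsilon$-independent constant is not among the stated propositions and would need its own verification. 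None of this is insurmountable, but it is exactly the kind of bookkeeping that the paper's iterated-multiplication route deliberately sidesteps, since Propositions~\ref{prop:besov-products}--\ref{prop:besov-interpolation} and the Besov embedding are the only ingredients and are all stated uniformly in $\epsilon$. If you want to keep your $W^{1,r}$ route, you should add a discrete Gagliardo--Nirenberg lemma (or a discrete Leibniz estimate with controlled remainder) in the same uniform-in-$\epsilon$ framework; otherwise the cleaner fix is to replace that block with the iterated Proposition~\ref{prop:besov-products} plus Besov embedding into $H^1$.
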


\begin{proof}
We estimate the terms on the right-hand side of \eqref{eq:y-infty-wick-binomial} separately.
For $k\in \{1,\ldots, n-1\}$ and $p >2/\kappa$, we have for any $\delta>0$
by Besov embedding \eqref{eq:besov-embedding} and the multiplication inequality \eqref{eq:besov-multiplication-inequality} 
\begin{align}
\label{eq:beov-product-estimate-1}
\| \wick{Y_\infty^{n-k}} \varphi^{k} \|_{\cC^{-\kappa}} 
\lesssim_{\kappa}
\| \wick{Y_\infty^{n-k} } \varphi^{k} \|_{\besov{p}{\infty}{-\kappa+ 2/p}}  
\lesssim_{\kappa, \delta}
\| \wick{Y_\infty^{n-k}} \|_{\besov{\infty}{\infty}{-\kappa + 2/p}} \|\varphi^{k} \|_{\besov{p}{\infty}{\kappa -2/p + \delta}}.
\end{align}
Now, by iterating the multiplication inequality \eqref{eq:besov-multiplication-inequality} and Besov embedding \eqref{eq:besov-embedding} we have for $2/\kappa < p \leq 2(1+ 1/k) / (\kappa + \delta)$
\begin{equation}
\| \varphi^k \|_{\besov{p}{\infty}{\kappa -2/p + \delta}} 
\lesssim_{k,p}
\| \varphi \|_{\besov{kp}{\infty}{\kappa-2/p + \delta}}^k 
\lesssim_{\delta, \kappa, k}
\| \varphi \|_{H^1}^k.
\end{equation}
Thus, we can further estimate \eqref{eq:beov-product-estimate-1} by
\begin{equation}
\| \wick{Y_\infty^{n-k}} \varphi^{k} \|_{\cC^{-\kappa}} 
\lesssim_{\kappa,\delta,k}
\| \wick{Y_\infty^{n-k}} \|_{\cC^{-\kappa + 2/p}} \| \varphi \|_{H^1}^k 
\lesssim
 \| \wick{Y_\infty^{n-k}} \|_{\cC^{-\bar \kappa}}^{n/(n-k)} +\| \varphi \|_{H^1}^n,
\end{equation}
where we used Young's inequality in the last step and set $\bar \kappa = \kappa-2/p$.
Summing over $k\in \{1,\ldots ,n-1\}$ and observing that
\begin{equation}
\| \varphi^n\|_{\cC^{-\kappa}} \leq \| \varphi\|_{H^1}^n,
\end{equation}
the estimate \eqref{eq:wick-gff-varphi-estimate} follows.
\end{proof}

\section{Stochastic representations of $\cP(\phi)_2$}

In this section, we present two stochastic representations of measures $\nu^{\cP_\epsilon}$: the Polchinski re\-nor\-ma\-li\-sa\-tion group approach and a stochastic control representation via the Bou\'e-Dupuis variational formula.
The former underlies the SDE \eqref{eq:pphi-sde-construction-intro} that we use to construct the process $\Phi^{\cP_\epsilon}$.
We show that there is an exact correspondence between these two approaches. In particular, the minimiser of the variational problem is explicitly related to the difference field of $\Phi^{\Delta_\epsilon}$ of the Polchinski dynamics.
For technical reasons, we introduce a potential cut-off to guarantee the well-posedness of the SDE and to ensure existence of minimisers for the variational problem.

\subsection{Pauli-Villars decomposition of the covariance}
\label{ssec:pauli-villars}

Let $(c_t^\epsilon)_{t\in[0,\infty]}$ be a continuously differentiable decomposition of the covariance of the GFF, i.e.\
\begin{equs}
c_t^\epsilon
=
\int_0^t \dot c_s^\epsilon ds,  \qquad c_\infty^\epsilon = (-\Delta^\epsilon + m^2)^{-1},
\end{equs}
where, for every $s > 0$, $\dot c_s^\epsilon$ is a positive-semidefinite operator acting on $X_\epsilon$. 
The choice of $\dot c_t^\epsilon$ is restricted by the condition on $c_\infty^\epsilon$. In this work we use the Pauli-Villars regularisation 
\begin{equs}
\label{eq:pauli-villars}
 c_t^\epsilon
 =
 ( -\Delta^\epsilon + m^2 + 1/t)^{-1}, \qquad \dot c_t^\epsilon = \frac{d}{dt}c_t^\epsilon.
\end{equs}

\begin{remark}\label{rem:pauli-villars}
The choice of the Pauli-Villars regularisation is twofold: first, it satisfies the required regularity estimates in Section \ref{sec:sobolev-norms-int-drift}. Second, it is technically convenient with regards to convergence of the maxima in Section \ref{sec:maximum}. In particular, it allows to avoid the introduction of an additional field in the approximation of the small scale field in Section \ref{sec:approx-small-scales}. This, however, is not necessary and we expect that some other choices which are more natural from an analytic perspective (i.e.\ satisfying the estimates of Section \ref{sec:sobolev-norms-int-drift}), such as a heat kernel decomposition, should also work.
\end{remark}

We view $c_t^\epsilon$ and $\dot c_t^\epsilon$ acting on $X_\epsilon$ as Fourier multipliers.
For instance, we have for $f\in X_\epsilon$
\begin{equation}
(\dot c_t^\epsilon f )(x) = \sum_{k\in \Omega_\epsilon^*}  \widehat{\dot c_t^\epsilon} (k)\hat f(k) e^{ikx},
\end{equation}
where the Fourier multipliers of $\dot c_t^\epsilon$ are given by
\begin{equation}
\widehat{ \dot c_t^\epsilon } =
\frac{1}{\big(t(-\hat \Delta^\epsilon(k)+m^2) + 1\big)^2},
\end{equation}
where $-\hat \Delta^\epsilon(k)$ is as in \eqref{eq:fourier-multipliers-discrete-laplacian}.
We also record the Fourier multipliers for $q_t^\epsilon$ which is the unique positive semi-definite operator on $X_\epsilon$ such that $\dot c_t^\epsilon =  q_t^\epsilon * q_t^\epsilon$,
where $*$ is the discrete convolution on $\Omega_\epsilon$.  From this defining relation, we immediately deduce that
\begin{equation}
\label{eq:fourier-multipliers-q}
\hat q_t^\epsilon (k) = \frac{1}{t(-\hat \Delta^\epsilon(k)+m^2) + 1}.
\end{equation}

\begin{remark}
For $x,y \in \Omega_\epsilon$, set $c_t^\epsilon(x,y) = (-\Delta^\epsilon + m^2 + 1/t)^{-1}(x,y)$, where $(x,y)\mapsto (-\Delta^\epsilon + m^2 + 1/t)^{-1}(x,y)$ is the Green's function of $(-\Delta^\epsilon + m^2 +1/t)$.
Note that $c_t^\epsilon(x,y) = C_{t/\epsilon^2}(x/\epsilon, y/\epsilon)$, where 
\begin{equs}
C_t^\epsilon
= 
(-\Delta + \epsilon^2 m^2 + 1/t )^{-1}
\end{equs}
is the Green's function for the massive unit lattice Laplacian.
It is easy to see that $c_t^\epsilon(x,y) = c_t^\epsilon(0,x-y)$, i.e.\ $c_t^\epsilon$ is stationary. We simply write $c_t^\epsilon(x-y)= c_t^\epsilon(x,y)$.
\end{remark}

We use the Pauli-Villars decomposition of $c_\infty^\epsilon$ to construct a process associated to the discrete Gaussian free field on $\Omega_\epsilon$ as follows.
Let $W$ be a cylindrical Brownian motion in $L^2(\Omega)$ defined on a probability space $(\mathcal{O}, \cF, \P)$ and denote by $(\cF_t)_{t \geq 0}$ and $(\cF^t)_{t \geq 0}$ the forward and backward filtrations generated  by the past $\{W_s-W_0\colon s \leq t\}$ and the future $\{W_s-W_t\colon s\geq t\}$.
We assume that $\cF$ is $\P$-complete and the filtrations are augmented by $\P$-null sets.
Moreover, we write expectation with respect to $\P$ as $\E$.
The cylindric Brownian motion $W$ can be almost surely represented as a random Fourier series via the so-called Karhunen-Lo\`eve expansion.
More precisely, for $k \in 2\pi \Z^2$ and $t \geq 0$, let $\hat W_t (k) = \int_\Omega W_t(x) e^{-i k \cdot x}dx$.
Then, almost surely $\{ \hat W(k) \colon k \in 2\pi \Z^2 \}$ is a set of complex standard Brownian motions, independent up to the constraint $\hat W(k)=\overline{\hat W(-k)}$
and $\hat W(0)$ is a real standard Brownian motion and we can write
\begin{equation}
W_t(x)
=
\sum_{k \in \Omega^*} e^{i k \cdot x} \hat W_t(k),
\qquad
x \in \Omega,
\end{equation}
where the sum converges uniformly on compact sets in $ C([0,\infty),H^{-1-\kappa})$ for any $\kappa>0$, i.e.\ 
for any $T\geq 0$, we have as $r\to \infty$
\begin{equation}
\E \Big [ \sup_{t\leq T} \Big \|  \sum_{|k|>r} e^{ik\cdot} \hat W_t(k)  \Big\|_{H^{-1-\kappa}}^2  \Big] \to 0 .
\end{equation}
To obtain a Brownian motion in $\Omega_\epsilon$ we restrict the formal Fourier series of $W$ to $k\in \Omega_\epsilon^*$, i.e.\
for $x\in \Omega_\epsilon$, we set 
\begin{equation}
W^\epsilon_t(x) \equiv \Pi_\epsilon W_t(x)=  \sum_{k \in \Omega_\epsilon^*} e^{ik\cdot x} \hat W_t(k),
\qquad x\in \Omega_\epsilon.
\end{equation}
Then $(W^\epsilon(x))_{x\in\Omega_\epsilon}$ are independent Brownian motions
indexed by $\Omega_\epsilon$ with quadratic variation $t/\epsilon^{2}$, see for instance \cite[Section 3.1]{MR4399156} for more details.
As in \eqref{eq:decomposed-gff-intro} we define the decomposed Gaussian free field $\Phi_t^{\GFF_\epsilon}$ by
\begin{equation} 
\label{eq:GFFepsFourier-pphi}
  \Phi_t^{\GFF_\epsilon} =
  \int_t^\infty q_s^\epsilon dW_s^\epsilon = 
  \sum_{k\in \Omega_\epsilon^*}  e^{ik\cdot(\cdot)}\int_t^\infty \hat q_u^\epsilon(k) d\hat W_u(k),
\end{equation}
where $\dot c_t^\epsilon = q_t^\epsilon * q_t^\epsilon$ and $\hat q_t (k)$ are as in \eqref{eq:fourier-multipliers-q}.
Note that $\Phi^{\GFF_\epsilon}$ has independent increments and that $\Phi_0^{\GFF_\epsilon}\sim \nu^{\GFF_\epsilon}$,
i.e.\ at $t=0$ the we obtain the discrete Gaussian free field on $\Omega_\epsilon$.
Moreover, we emphasise that the process $\Phi^{\GFF_\epsilon} = (\Phi^{\GFF_\epsilon}_t)_{t\geq 0}$ is adapted
to the \emph{backward} filtration $(\cF^t)$.

\subsection{Polchinski renormalisation group dynamics for $\cP(\phi)_2$}
\label{ssec:polchinski}

Let $v_0^\epsilon(\phi) = \epsilon^2 \sum_{x\in \Omega_\epsilon} \wick{\cP(\phi(x))}_\epsilon$ be the interaction for the measure $\nu^{\cP_\epsilon}$ in \eqref{eq:nu-p-eps}. We also refer to this object as Hamiltonian of the $\cP(\phi)_2$ field.
For the reasons described at the beginning of this section, we consider the following energy cut-off for $v_0^\epsilon$.
For $E>0$ let $\chi_E \in C^2(\R,\R)$ be concave and increasing such that
\begin{equation}
\chi_E(x)
=
\begin{cases}
x, \,\qquad & x \in (-\infty,E/2] 
\\ 
 E, \,\qquad & x > E
\end{cases}	
\end{equation}
and moreover
$\chi_E \leq \chi_{E'}$ on $[0,\infty)$ for $E \leq E'$ and $\sup_{E > 0} \| \chi_E' \|_{\infty} < 2$.
Then we define the cut-off Hamiltonian $v_0^{\epsilon,E} =\chi_E \circ v_0^{\epsilon}$ and the associated measure $\nu^{\cP_\epsilon, E}$ on $\Omega_\epsilon$ by
\begin{equation}
\label{eq:nu-p-eps-cut-off}
\nu^{\cP_\epsilon,E}(d\phi)
\propto
 e^{-v_0^{\epsilon,E}(\phi)} \nu^{\GFF_\epsilon}(d\phi)	.
\end{equation}
Next, we define the renormalised potential $v_t^{\epsilon,E}$ at scale $t\geq 0$ and for $E\in (0,\infty]$ by 
\begin{equation}
\label{eq:pphi-renormalised-potential-cut-off}
e^{-v_t^{\epsilon,E}(\phi)} = \EE_{c_t^\epsilon} [e^{-v_0^{\epsilon,E}(\phi + \zeta)}],
\end{equation}
where we recall that $\EE_{c_t^\epsilon}$ denotes the expectation with respect to the centred Gaussian measure with covariance $c_t^\epsilon$.

We think of  $v_t^{\epsilon,E}$ as the effective potential that sees fluctuations on the scales larger than the characteristic length scale $L_t=\sqrt{t} \wedge 1/m$.
Indeed, \eqref{eq:pphi-renormalised-potential-cut-off} can be interpreted as integrating out all small scale parts of the discrete GFF which are associated to the covariance $c_t^\epsilon$. 

The scale evolution of the renormalised potential is further encoded in the Polchinski equation, as stated in the following proposition. For a proof of this result, see for instance \cite{MR914427}.

\begin{lemma} 
\label{lem:polchinski-equation}
Let $\epsilon > 0$ and $E >0 $. Then $v^{\epsilon,E}$
 is a classical solution of the Polchinski equation
\begin{equs} 
\label{eq:polchinski-pphi-cut-off}
\partial_t v_t^{\epsilon,E}
=
\frac{1}{2} \Delta_{\dot c_t^\epsilon} v_t^{\epsilon,E} - \frac{1}{2} (\nabla v_t^{\epsilon,E})_{\dot c_t^\epsilon}^2
=
\frac{1}{2} \epsilon^4\sum_{x,y \in \Omega_\epsilon} \dot c_t^\epsilon(x,y)
  \qa{\ddp{^2v_t^\epsilon}{\varphi_x\partial\varphi_y}
    -
    \ddp{v_t^\epsilon}{\varphi_x}
    \ddp{v_t^\epsilon}{\varphi_y}
    }
\end{equs}
with initial condition $v_0^{\epsilon,E}$.
\end{lemma}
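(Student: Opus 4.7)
The plan is to differentiate the defining identity $e^{-v_t^{\epsilon,E}(\phi)} = \EE_{c_t^\epsilon}[e^{-v_0^{\epsilon,E}(\phi+\zeta)}]$ in $t$, which produces a heat-type equation for $u_t := e^{-v_t^{\epsilon,E}}$, and then to translate this into the stated equation for $v_t^{\epsilon,E}$ via the chain rule. For fixed $\epsilon > 0$ the space $X_\epsilon$ is finite-dimensional and, from the Fourier multipliers
\[
  \hat c_t^\epsilon(k) = \frac{1}{-\hat\Delta^\epsilon(k)+m^2+1/t},
\]
$c_t^\epsilon$ is strictly positive definite for each $t > 0$. The cutoff gives $v_0^{\epsilon,E} \leq E$, while the coercivity of $\cP$ together with the monotonicity of $\chi_E$ yields a lower bound for $v_0^{\epsilon,E}$, so that $F := e^{-v_0^{\epsilon,E}}$ is smooth and bounded on $X_\epsilon$. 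Writing $p_{c_t^\epsilon}$ for the Gaussian density with covariance $c_t^\epsilon$, I can represent $u_t = p_{c_t^\epsilon} * F$, which is smooth and strictly positive in $(t,\phi)\in (0,\infty)\times X_\epsilon$, and any exchange of $\partial_t$ with the Gaussian integral below is justified by dominated convergence on compact subintervals of $(0,\infty)$.

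Next I would verify that the Gaussian density itself solves the heat equation
\[
  \partial_t p_{c_t^\epsilon}(\zeta) = \tfrac12\,\Delta_{\dot c_t^\epsilon} p_{c_t^\epsilon}(\zeta).
\]
This is immediate in Fourier: $\widehat{p_{c_t^\epsilon}}(\xi) = \exp(-\tfrac12 \langle \xi, c_t^\epsilon \xi\rangle)$ and both sides transform to $-\tfrac12\langle \xi, \dot c_t^\epsilon \xi\rangle\,\widehat{p_{c_t^\epsilon}}(\xi)$. Since convolution with $F$ commutes with $\partial_t$ and with each $\partial^2_{\varphi_x\varphi_y}$ appearing in $\Delta_{\dot c_t^\epsilon}$, the same equation propagates to $u_t$, giving $\partial_t u_t = \tfrac12\,\Delta_{\dot c_t^\epsilon} u_t$.

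Finally I would substitute $v_t = -\log u_t$. The chain-rule identities
\[
  \partial_{\varphi_x} u_t = -u_t\,\partial_{\varphi_x} v_t, \qquad \partial^2_{\varphi_x\varphi_y} u_t = u_t\bigl(\partial_{\varphi_x} v_t\,\partial_{\varphi_y} v_t - \partial^2_{\varphi_x\varphi_y} v_t\bigr)
\]
yield $u_t^{-1}\,\Delta_{\dot c_t^\epsilon} u_t = (\nabla v_t)_{\dot c_t^\epsilon} - \Delta_{\dot c_t^\epsilon} v_t$, and dividing the heat equation for $u_t$ by $u_t$ and using $\partial_t v_t = -\partial_t u_t/u_t$ gives exactly \eqref{eq:polchinski-pphi-cut-off}. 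The initial condition $v_t^{\epsilon,E}\to v_0^{\epsilon,E}$ as $t\downarrow 0$ then follows from the weak convergence of $p_{c_t^\epsilon}$ to $\delta_0$ on the finite-dimensional $X_\epsilon$ together with continuity of $F$. The argument faces essentially no obstacle: the problem is finite-dimensional for each $\epsilon > 0$ and the cutoff eliminates any integrability concern; the only mildly delicate point is the degeneracy of $c_t^\epsilon$ at $t=0$, but this is harmless since the statement asks only for a classical solution on $(0,\infty)$ matching the initial datum by continuity.
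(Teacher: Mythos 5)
The paper does not actually prove this lemma; it defers to the reference \cite{MR914427}. Your argument is the standard self‑contained derivation and it is correct: writing $u_t = e^{-v_t^{\epsilon,E}} = p_{c_t^\epsilon}\ast F$ with $F=e^{-v_0^{\epsilon,E}}$, the Fourier identity $\partial_t \widehat{p_{c_t^\epsilon}} = -\tfrac12\langle \xi,\dot c_t^\epsilon\xi\rangle\,\widehat{p_{c_t^\epsilon}}$ gives the heat equation $\partial_t u_t = \tfrac12\Delta_{\dot c_t^\epsilon}u_t$, the cutoff makes $F$ bounded above and below by strictly positive constants so that $u_t$ is strictly positive and the exchange of differentiation and Gaussian integration is legitimate on the finite‑dimensional $X_\epsilon$, and substituting $v_t=-\log u_t$ produces \eqref{eq:polchinski-pphi-cut-off}; finally $c_t^\epsilon\to 0$ as $t\downarrow 0$ yields the initial condition by weak convergence of $p_{c_t^\epsilon}$ to $\delta_0$. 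One harmless inaccuracy: since $\chi_E$ is only assumed $C^2$, the map $F$ is $C^2$ rather than smooth; this is irrelevant because $u_t=p_{c_t^\epsilon}\ast F$ is automatically $C^\infty$ in $\phi$ for $t>0$, and $C^2$ suffices at $t=0$.
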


We record a priori estimates on the gradient and Hessian of $v_t^{\epsilon,E}$, where we make heavy use of the potential cut-off.
We emphasise that the bounds are uniform in $\phi\in X_\epsilon$, but not in $\epsilon>0$ and $E>0$.

\begin{lemma}
\label{lem:estimates-hess-nabla}
Let $\epsilon>0$ and $E \in (0,\infty)$. There exists $C=C(\epsilon,E)>0$ such that
\begin{align}
 \label{eq:gradient-bound}
\sup_{t \geq 0} \| \nabla v_t^{\epsilon,E} \|_{L^\infty(\Omega_\epsilon)}	
&\leq
C
\\
 \label{eq:hessian-bound}
\sup_{t \geq 0} \| \He v_t^{\epsilon,E} \|_{L^\infty(\Omega_\epsilon)}
&\leq
C,
\end{align}
where there is an implicit summation over the coordinates of $\nabla v_t^{\epsilon,E}$ and $\He v_t^{\epsilon,E}$ in the norms above.
\end{lemma}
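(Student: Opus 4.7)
The plan is to exploit the potential cut-off at the initial scale $t=0$ and propagate the resulting uniform bounds through the Polchinski flow by writing derivatives of $v_t^{\epsilon,E}$ as expectations against a tilted Gibbs measure. The key observation is that, although $v_0^\epsilon$ and its derivatives are unbounded polynomials in $\varphi$, the composition with $\chi_E$ localises all the $\varphi$-dependence to a compact set of $X_\epsilon$.

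\emph{Step 1: bounds at scale zero.} By the chain rule
\begin{equs}
\nabla v_0^{\epsilon,E}(\varphi) &= \chi_E'(v_0^\epsilon(\varphi)) \, \nabla v_0^\epsilon(\varphi), \\
\He v_0^{\epsilon,E}(\varphi) &= \chi_E''(v_0^\epsilon(\varphi)) \, \nabla v_0^\epsilon(\varphi) \otimes \nabla v_0^\epsilon(\varphi) + \chi_E'(v_0^\epsilon(\varphi)) \, \He v_0^\epsilon(\varphi).
\end{equs}
Since $\chi_E$ is constant on $(E,\infty)$, both $\chi_E'$ and $\chi_E''$ vanish outside the sub-level set $\{\varphi \in X_\epsilon : v_0^\epsilon(\varphi) \leq E\}$, and they are uniformly bounded by hypothesis on $\chi_E$. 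I then claim that on this sub-level set, $\|\varphi\|_{L^\infty(\Omega_\epsilon)} \leq M(E,\epsilon)$. Indeed, for fixed $\epsilon>0$ the Wick monomial $\wick{\cP(\cdot)}_\epsilon$ is an even-degree polynomial on $\R$ with positive leading coefficient, hence has a finite infimum $-K_\epsilon$ on $\R$. For any $x_0 \in \Omega_\epsilon$,
\begin{equation}
\epsilon^2 \wick{\cP(\varphi(x_0))}_\epsilon \leq v_0^\epsilon(\varphi) - \epsilon^2 \sum_{x \neq x_0} \wick{\cP(\varphi(x))}_\epsilon \leq E + K_\epsilon,
\end{equation}
so $\wick{\cP(\varphi(x_0))}_\epsilon \leq \epsilon^{-2}(E+K_\epsilon)$, which by coercivity of the polynomial bounds $|\varphi(x_0)|$ by a constant $M(E,\epsilon)$ independent of $x_0$. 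On the ball $\{\|\varphi\|_\infty \leq M\}$, the polynomials $\nabla v_0^\epsilon$ and $\He v_0^\epsilon$ are bounded by $C(E,\epsilon)$, and combining with the bounds on $\chi_E', \chi_E''$ yields
\begin{equation}
\sup_{\varphi \in X_\epsilon} \|\nabla v_0^{\epsilon,E}(\varphi)\|_\infty + \sup_{\varphi \in X_\epsilon} \|\He v_0^{\epsilon,E}(\varphi)\|_\infty \leq C(E,\epsilon).
\end{equation}

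\emph{Step 2: propagation to scale $t$.} From the definition \eqref{eq:pphi-renormalised-potential-cut-off}, writing $g_t(\phi) = \EE_{c_t^\epsilon}[e^{-v_0^{\epsilon,E}(\phi+\zeta)}]$ and differentiating $v_t^{\epsilon,E} = -\log g_t$ in $\phi$, one obtains, with $\mu_{t,\phi}(d\zeta) := g_t(\phi)^{-1} e^{-v_0^{\epsilon,E}(\phi+\zeta)} \nu_{c_t^\epsilon}(d\zeta)$ a probability measure,
\begin{equs}
\nabla v_t^{\epsilon,E}(\phi) &= \EE^{\mu_{t,\phi}}\!\big[\nabla v_0^{\epsilon,E}(\phi+\zeta)\big], \\
\He v_t^{\epsilon,E}(\phi) &= \EE^{\mu_{t,\phi}}\!\big[\He v_0^{\epsilon,E}(\phi+\zeta)\big] - \mathrm{Cov}^{\mu_{t,\phi}}\!\big(\nabla v_0^{\epsilon,E}(\phi+\zeta)\big).
\end{equs}
Since $\mu_{t,\phi}$ is a probability measure, the bounds from Step 1 pass through the expectation componentwise, and the covariance is controlled by the square of the $L^\infty$ bound on $\nabla v_0^{\epsilon,E}$. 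This gives \eqref{eq:gradient-bound} and \eqref{eq:hessian-bound} uniformly in $t \geq 0$.

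\emph{Main obstacle.} The conceptually delicate step is the coercivity bound in Step 1, which is where the finiteness of $\epsilon$ is used crucially: the constants $K_\epsilon$ and $M(E,\epsilon)$ both blow up as $\epsilon \to 0$ because of the logarithmic divergence \eqref{eq:variance-gff} of the Wick counterterms. One needs to be careful to extract a pointwise bound on each coordinate $\varphi(x_0)$ rather than a bound on some averaged norm, which is the role of the ``one-term-isolated'' argument above. Everything else is standard manipulation of the cumulant generating function.
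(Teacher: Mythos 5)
Your proof is correct and follows essentially the same path as the paper's: chain rule at $t=0$, a coercivity argument showing $\chi_E'(v_0^\epsilon)$ and $\chi_E''(v_0^\epsilon)$ have compact support in $\phi$ so that the polynomials $\nabla v_0^\epsilon$, $\He v_0^\epsilon$ are bounded there, and propagation to $t>0$ by differentiating the defining identity \eqref{eq:pphi-renormalised-potential-cut-off}. The paper condenses your Step~1 into the remark that $|v_0^\epsilon(\phi)|\to\infty$ as $\|\phi\|_\infty\to\infty$ forces $\nabla v_0^{\epsilon,E}$ to have compact support; your explicit one-term-isolated coercivity bound is a useful spelling-out of this. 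In Step~2 the paper writes $\nabla v_t^E(\phi)=e^{v_t^E(\phi)}\EE_{c_t}[\nabla v_0^E e^{-v_0^E}]$ and invokes $|v_t^{\epsilon,E}|\lesssim_\epsilon E$, whereas your reformulation via the tilted probability measure $\mu_{t,\phi}$ and the cumulant identity $\He v_t = \EE^{\mu}[\He v_0] - \mathrm{Cov}^{\mu}(\nabla v_0)$ is algebraically identical but slightly cleaner, since the normalisation is automatic and the covariance term is controlled directly by the $L^\infty$ bound on $\nabla v_0^{\epsilon,E}$. No gap.
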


\begin{proof}
We first observe that we have by the chain rule
\begin{equation}
\nabla v_0^{\epsilon,E}(\phi)
=
\chi_E'( v_0^\epsilon(\phi)) \, \nabla v_0^{\epsilon}(\phi).	
\end{equation}
Since $v_0^\epsilon$ is continuous in $\phi\in X_\epsilon$ and since $|v_0^\epsilon(\phi)| \to \infty$ as $\| \phi \|_{L^\infty(\Omega_\epsilon)} \to \infty$, 
we have that $\nabla v_0^{\epsilon,E}$ has bounded support.
Since the components of $\nabla v_0^\epsilon$ are polynomials in $\phi$, \eqref{eq:gradient-bound} follows for $t=0$.
For $t>0$, we obtain by differentiating \eqref{eq:pphi-renormalised-potential-cut-off}
\begin{equation}
\label{eq:nabla-v-t-E}
\nabla v_t^{\epsilon, E} (\phi) 
= e^{v_t^{\epsilon, E}(\phi)} \EE_{c_t^\epsilon} \big[\nabla v_0^{\epsilon, E}(\phi+\zeta) e^{-v_0^{\epsilon, E}(\phi+\zeta)} \big]. 
\end{equation}
Now the cut-off $\chi_E$ allows to bound $|v_t^{\epsilon, E}(\phi)| \lesssim_\epsilon E$ and hence, we can estimate
\begin{equation}
\| \nabla v_t^{\epsilon, E} (\phi) \|_{L^\infty(\Omega_\epsilon)} 
\lesssim_{\epsilon, E} \EE_{c_t^\epsilon} \big[ \| \nabla v_0^{\epsilon, E} (\phi+\zeta) e^{-v_0^{\epsilon, E}(\phi+\zeta)}  \|_{L^\infty(\Omega_\epsilon)} \big]
 \lesssim_{\epsilon, E} 
 \EE_{c_t^\epsilon} \big[ \| \nabla v_0^{\epsilon,E}(\phi+\zeta) \|_{L^2}  \big],
\end{equation}
from which \eqref{eq:gradient-bound} follows.
Similarly, we have
\begin{equation}
\He v_0^{\epsilon,E}  = \chi_E'' (v_0^\epsilon) \nabla v_0^\epsilon \cdot (\nabla v_0^\epsilon)^T + \chi_E'(v_0^\epsilon) \He v_0^
\epsilon,
\end{equation}
and thus, \eqref{eq:hessian-bound} follows from the same arguments for $t=0$. For $t>0$ the statement is obtained by differentiating \eqref{eq:nabla-v-t-E} and similar arguments and the bounds for $\nabla v_t^{\epsilon, E}$.
\end{proof}

We now introduce the Polchinski dynamics: a high-dimensional SDE driven by $\Phi_t^{\GFF_\epsilon}$ and with drift given by the gradient of the renormalised potential.
We then use the Polchinski equation to show that this provides coupling between the $\cP(\phi)_2$ field with cut-off $E$ and the discrete Gaussian free field on $\Omega_\epsilon$.
One of the key properties of this dynamic that makes it useful to study global probabilistic properties of the $\cP(\phi)_2$ measure is an independence property between small and large spatial scales.
Recall that we denote by $C_0([0, \infty), \cS)$ the space of continuous sample paths with values in a metric space $(\cS, \|\cdot\|_\cS)$ that vanish at infinity.

\begin{proposition} 
\label{prop:sde-well-posed-cut-off}
  For $\epsilon>0$ and $E>0$ there is a unique $\cF^t$-adapted process $\Phi^{\cP_\epsilon,E} \in C_0([0,\infty), X_\epsilon)$ such that
  \begin{equation}
    \label{eq:SDE-polchinski-cut-off}
    \Phi_{t}^{\cP_\epsilon,E}
    = - \int_t^\infty \dot c^\epsilon_u \nabla v_{u}^{\epsilon,E}(\Phi_u^{\cP_\epsilon,E}) \, du
      + \Phi_t^{\GFF_\epsilon}.
    \end{equation}
  In particular, for any $t>0$, $\Phi^{\GFF_\epsilon}_0-\Phi_t^{\GFF_\epsilon}$ is independent of $\Phi_t^{\cP_\epsilon,E}$.
Moreover,
$\Phi_0^{\cP_\epsilon,E}$ is distributed as the measure $\nu^{\cP_\epsilon,E}$
defined in \eqref{eq:nu-p-eps-cut-off}.
\end{proposition}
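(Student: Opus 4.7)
The plan is to verify well-posedness, the independence property, and the identification of the marginal of $\Phi^{\cP_\epsilon,E}$ at $t=0$ in sequence.

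For well-posedness, Lemma \ref{lem:estimates-hess-nabla} supplies $t$-uniform bounds on $\nabla v_t^{\epsilon,E}$ and $\He v_t^{\epsilon,E}$, so the drift $\dot c_u^\epsilon\,\nabla v_u^{\epsilon,E}(\cdot)$ is bounded and globally Lipschitz in its state variable. Combined with $\int_0^\infty \dot c_u^\epsilon\,du = c_\infty^\epsilon$ (as an operator bound on the finite-dimensional space $X_\epsilon$), a Picard iteration for the integral map
\begin{equation*}
  (\mathcal{T}\Phi)_t = -\int_t^\infty \dot c_u^\epsilon\, \nabla v_u^{\epsilon,E}(\Phi_u)\,du + \Phi_t^{\GFF_\epsilon}
\end{equation*}
on $C_0([0,\infty), X_\epsilon)$ (using a weighted sup-norm, or applied to successive short sub-intervals and glued) produces a unique fixed point. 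Adaptedness to the backward filtration $(\cF^t)$ is automatic, since $\Phi_t^{\GFF_\epsilon}$ and the integral over $[t,\infty)$ both depend only on $\{W_s - W_t : s \geq t\}$, and $\Phi_t^{\cP_\epsilon,E}\to 0$ as $t\to\infty$ is inherited from $\Phi_t^{\GFF_\epsilon}\to 0$ and the tail bound on the drift integral. The independence statement is then immediate: $\Phi_t^{\cP_\epsilon,E}$ is $\cF^t$-measurable, while $\Phi_0^{\GFF_\epsilon} - \Phi_t^{\GFF_\epsilon} = \int_0^t q_s^\epsilon\,dW_s^\epsilon$ is $\cF_t$-measurable, and the independent-increment property of $W$ yields $\cF_t \perp \cF^t$.

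The substantive part is identifying the law of $\Phi_0^{\cP_\epsilon,E}$. I would prove the stronger time-indexed statement that, for each $t \geq 0$, $\Phi_t^{\cP_\epsilon,E}$ has density proportional to $e^{-v_t^{\epsilon,E}(\phi)}$ with respect to the centred Gaussian measure on $X_\epsilon$ with covariance $c_\infty^\epsilon - c_t^\epsilon$; setting $t=0$ then recovers $\nu^{\cP_\epsilon,E}$ because $c_0^\epsilon = 0$ and $v_0^{\epsilon,E}$ is by definition the cut-off Hamiltonian. The argument is a comparison of Fokker--Planck-type evolutions: the law $\mu_t$ of $\Phi_t^{\cP_\epsilon,E}$ satisfies a backward Kolmogorov equation read off from the SDE, while the candidate density has an explicit $t$-derivative computed by differentiating in $t$ and invoking the Polchinski equation \eqref{eq:polchinski-pphi-cut-off} together with the standard evolution of centred Gaussian densities. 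The two evolutions coincide by the very design of the drift $-\dot c_t^\epsilon\,\nabla v_t^{\epsilon,E}$, and at $t=\infty$ both reduce to $\delta_0$ (since $\Phi_\infty^{\cP_\epsilon,E}=0$ and $c_\infty^\epsilon - c_\infty^\epsilon = 0$), so uniqueness for the resulting evolution forces agreement at every $t$. The main obstacle is to carry out the underlying backward-time It\^o calculus rigorously and to promote the candidate local martingales appearing in the comparison to genuine ones; this is precisely where the potential cut-off $E$ is indispensable, since without the uniform $L^\infty$ bounds of Lemma \ref{lem:estimates-hess-nabla} these manipulations could not be justified.
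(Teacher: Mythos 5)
Your well-posedness argument (Picard iteration using the uniform $L^\infty$ bounds from Lemma~\ref{lem:estimates-hess-nabla} and the integrability of $t\mapsto\|\dot c_t^\epsilon\|$), the adaptedness/independence observation, and the candidate marginal law (density proportional to $e^{-v_t^{\epsilon,E}}$ against the centred Gaussian of covariance $c_\infty^\epsilon-c_t^\epsilon$, which is exactly the paper's $\nu_t^{\cP,E}$) all match the paper's approach, and the Polchinski-equation/Fokker--Planck comparison you sketch is indeed the mechanism behind the identification.

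The gap is at the endpoint $t=\infty$. You claim both evolutions ``reduce to $\delta_0$'' there and that ``uniqueness for the resulting evolution forces agreement at every $t$,'' but this does not close as stated: the candidate density $e^{-v_t^{\epsilon,E}}$ is only a probability density against a Gaussian with covariance $c_\infty^\epsilon-c_t^\epsilon$, and this covariance degenerates to zero as $t\to\infty$. There is therefore no nontrivial terminal datum for the Kolmogorov/Fokker--Planck comparison at $t=\infty$, and uniqueness of a diffusion flow emanating from a Dirac mass at an \emph{infinite} time horizon is not automatic even with Lipschitz coefficients; the potential cut-off $E$ alone does not resolve the degeneracy of the reference Gaussian. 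The paper's way around this is to pick a finite $T$, reverse time, and run the forward SDE $d\tilde\Phi_t^T = -\dot c_{T-t}\nabla v_{T-t}^{\epsilon,E}(\tilde\Phi_t^T)\,dt + q_{T-t}\,d\tilde W_t^T$ on $[0,T]$ with \emph{initial} distribution $\nu_T^{\cP,E}$, a genuine non-degenerate probability measure on $X_\epsilon$. The Polchinski-semigroup computation --- your Fokker--Planck comparison --- is applied only on this finite, regular window, giving $\tilde\Phi_T^T\sim\nu^{\cP_\epsilon,E}$ for each finite $T$. Reversing time back, the finite-$T$ solutions $\Phi^T_t := \tilde\Phi_{T-t}^T$ are automatically coupled to $\Phi^{\cP_\epsilon,E}$ through the common driving Brownian motion $W$, and a Gronwall estimate (using again the Hessian/Lipschitz bound and the weak convergence $\nu_T^{\cP,E}\rightharpoonup\delta_0$) shows $\|\Phi_0^{\cP_\epsilon,E}-\Phi_0^T\|_{L^2}\to 0$ in probability as $T\to\infty$, which is what actually transports the finite-$T$ identification of the law to the infinite-horizon solution. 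This $T\to\infty$ approximation-and-coupling step is what your sketch is missing.
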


In removing the cut-off, the left-hand side of \eqref{eq:SDE-polchinski-cut-off} loses meaning as an adapted solution to a backwards SDE.
However, we show that the right-hand side can be made sense of in the limit and use this to define the left-hand side.
Moreover, the independence property in Proposition \ref{prop:sde-well-posed-cut-off} is preserved. 
As such, the finite variation integral in \eqref{eq:SDE-polchinski-cut-off} is one of the main quantities of interest in the remainder of this paper, and we record this notion in the following definition.

\begin{definition}
Let $\epsilon > 0$ and $E \in (0,\infty]$. The approximate difference field $\Phi^{\Delta_\epsilon,E} \in C([0,\infty), X_\epsilon)$ is the $(\cF^t)_{t \geq 0}$ adapted process defined by
\begin{equs}
\Phi^{\Delta_\epsilon, E}_t
=
-\int_t^\infty \dot c_s \nabla v_s^{\epsilon,E}(\Phi_s^{\cP_\epsilon, E}) ds	,
\qquad
t \geq 0,
\end{equs}
where $\Phi^{\cP_\epsilon, E}$ is the solution to \eqref{eq:SDE-polchinski-cut-off}.  In particular,
\begin{equs}
\label{eq:coupling-cut-off-fields}
\Phi_t^{\cP_\epsilon,E}
=
\Phi_t^{\Delta_\epsilon,E} + \Phi_t^{\GFF_\epsilon},
\qquad
t \geq 0.
\end{equs}
\end{definition}

\begin{proof}[Proof of Proposition \ref{prop:sde-well-posed-cut-off}]
We first prove that the SDE is well-defined and has a (pathwise) unique, strong solution on $[0,\infty)$. 
Then the independence property follows by independence of increments of the underlying cylindrical Brownian motion. 
Since $\epsilon>0$ is fixed throughout the proof, we suppress it from the notation when clear.
 
We first show that the coefficients of the SDE \eqref{eq:SDE-polchinski-cut-off} are uniformly Lipschitz thanks to the global Hessian bound \eqref{eq:hessian-bound}.
The proof is then completed by arguing as in \cite[Theorem 3.1]{MR4399156}. 
For $\cD,\cE \in C([0,\infty), X_\epsilon)$, let
\begin{equs}
\label{eq:F-Picard-def-pphi}
F_t(\cD,\cE) = -\int_t^\infty \dot c_s\nabla v_s^E(\cE_s+\cD_s) \, ds.
\end{equs}
By the mean value theorem and the bound \eqref{eq:hessian-bound}, we have, for $s\geq 0$ and $\Phi, \tilde \Phi \in X_\epsilon$, 
\begin{equs}
\label{eq:nabla-v-lipschitz}
\| \dot c_s \nabla v_s^E (\Phi) - \dot c_s \nabla v_s^E (\tilde \Phi)\|_{L^2} 
\leq 
\| \He v_s^E \|_{L^\infty} \| \dot c_s\| \|\Phi - \tilde \Phi \|_{L^2}
\lesssim_{\epsilon,E }
\| \dot c_s \| \|\Phi - \tilde \Phi \|_{L^2},
\end{equs}
where $\|\dot c_s\|$ denotes the spectral norm of the operator $\dot c_s \colon X_\epsilon \to X_\epsilon$. Thus, for $\cE \in C([0,\infty),X_\epsilon)$, we have that
\begin{equs} 
\label{eq:F-Lip-pphi}
\norm{F_t(\cD,\cE)-F_t(\tilde \cD,\cE)}_{L^2}
\leq 
  \int_t^\infty O_{\epsilon,E} (\| \dot c_s \| ) \norm{\cD_s-\tilde \cD_s}_{L^2} \, ds.
\end{equs}

Suppose first that there are two solutions $\Phi^{\cP}$ and $\tilde\Phi^{\cP}$ to \eqref{eq:SDE-polchinski-cut-off} satisfying $\cD\coloneqq\Phi^{\cP}-\Phi^{\GFF} \in C_0([0,\infty),X_\epsilon)$
and $\tilde \cD\coloneqq\tilde\Phi^{\cP}-\Phi^{\GFF} \in C_0([0,\infty),X_\epsilon)$.
Then, by \eqref{eq:F-Lip-pphi},
\begin{equs}
  \norm{\cD_t-\tilde \cD_t}_{L^2}
  &= 
  \norm{F_t (\cD,\Phi^{\GFF})-F_t(\tilde \cD,\Phi^{\GFF})}_{L^2}
    \nnb
  &\leq
   \int_t^\infty O_{\epsilon,E}(\|\dot c_s\|) \norm{\cD_s-\tilde \cD_s}_{L^2} \, ds.
\end{equs}
Thus, $f(t) = \norm{\cD_t-\tilde \cD_t}_{L^2}$ is bounded with $f(t)\to 0$ as $t\to\infty$ and additionally satisfies
\begin{equs}
  f(t) \leq a + \int_{t}^\infty O_{\epsilon, E} (\|\dot c_s\|) f(s) \, ds, \qquad a=0.
\end{equs}
Since $\| \dot c_s \| \lesssim_m \frac{1}{1+ s^2}$, we have $\int_{0}^\infty O(\| \dot c_s \|) \, ds < \infty$ and thus,
a version of Gronwall's inequality 
implies that for $t\geq 0$
\begin{equation}
f(t) \leq a \exp\pa{\int_t^\infty O_{\epsilon,E}( \|\dot c_s\|) \, ds} = 0,
\end{equation}
and hence, $\cD=\tilde \cD$ on $[0,\infty)$. 

That a solution to \eqref{eq:SDE-polchinski-cut-off} on $[0,\infty)$ exists follows from Picard iteration.
For $\cD\in C([0,\infty), X_\epsilon)$ and $t\geq 0$ let $\|\cD\|_t = \sup_{s\geq t} \norm{\cD_s}_{L^2}$.
Fix $\cE\in C_0([0,\infty), X_\epsilon)$ and
set $\cD^0=0$ and $\cD^{n+1}=F(\cD^n,\cE)$.
Then,
\begin{equs}
  \|\cD^1\|_t 
  = 
  \|F(0,\cE)\|_t &\leq \|\cE\|_t \int_t^\infty  O_{\epsilon,E}(\|\dot c_s\|)\,ds
  \\
  \| \cD^{n+1}-\cD^n \|_t
  &\leq 
  \int_t^\infty  O_{\epsilon,E}(\|\dot c_s\|) \| \cD^{n}-\cD^{n-1}\|_{s}\,ds ,
\end{equs}
and from the elementary identity 
\begin{equs}
  \int_{t}^\infty ds \, g(s) \pa{\int_s^\infty ds' \, g(s')}^{k-1} 
  = 
   \frac{1}{k} \pa{\int_{t}^\infty ds\, g(s)}^{k}
\end{equs}
applied with $g(s) = O( \|\dot c_s \|)$, we conclude that
\begin{equs}
\label{eq:picard-iterates-difference}
  \| \cD^{n+1}-\cD^n\|_t
  \lesssim_{\epsilon,E}
   \| \cE\|_{t} \frac{1}{n!} \pa{O(\| \dot c_t \|)}^n.
\end{equs}

Since the right-hand side of \eqref{eq:picard-iterates-difference} is summable, we have that $\cD^n \to \cD^*$ for some $\cD^* = \cD^*(\cE) \in C_0([0,\infty),X_\epsilon)$ in $\|\cdot\|_{0}$,
and the limit satisfies $F_s(\cD^*(\cE),\cE) = \cD^*(\cE)_s$ for $s\geq {0}$.
Now, we apply this result for $\cE=\Phi^\GFF$ noting that, from the representation \eqref{eq:GFFepsFourier-pphi},
\begin{equs}
 \E \big[ \|\Phi_t^\GFF\|_{L^2}^2 \big]  
 = 
 \sum_{k\in \Omega_\epsilon^*} \int_t^\infty \frac{1}{\big(s(-\hat \Delta^\epsilon(k) + m^2) +1)  \big)^2} ds \lesssim  1/t,
\end{equs}
which implies that $\Phi^\GFF \in C_0([0,\infty), X_\epsilon)$ a.s.
In summary, $\Phi^{\cP, E}_t = D^*(\Phi^\GFF)_t + \Phi^{\GFF}_t$ is the desired solution on $[0,\infty)$.

We now prove that $\Phi_0^{\cP,E}$ is distributed as $\nu^{\cP,E}$ as defined in \eqref{eq:nu-p-eps-cut-off}, for which we proceed similarly as in the proof of \cite[Theorem 3.2]{MR4399156}.
Let $\nu_t^{\cP,E}$ be the renormalised measure defined by 
\begin{equs}
\label{eq:ren-measure-cut-off}
\E_{\nu_t^{\cP,E}} [F] 
= 
e^{\vE_\infty(0)} \EE_{c_\infty -c_t}[e^{-\vE_t(\zeta)}F(\zeta)],
\end{equs}
where $\EE_{c_t}$ denotes the expectation of the Gaussian measure with covariance $c_t$.
Then, let $\tilde\Phi^T$ be the unique strong solution to the (forward) SDE
\begin{equation}
  d\tilde\Phi_t^T = -\dot c_{T-t} \nabla \vE_{T-t}(\tilde\Phi_t^T) \, dt + q_{T-t} \, d\tilde W_t^T,
  \qquad 0 \leq t \leq T
\end{equation}
with initial condition $\tilde \Phi_0\sim \nu_T^{\cP,E}$ and $\tilde W^{T}_t = W_T-W_{T-t}$.
Existence and uniqueness can be seen by the exact same arguments as above for the case $T=\infty$.
Using the Polchinski semigroup $\PP_{s,t}$, $s\leq t$ as defined in \cite[(1.3--1.4)]{MR4303014} to evolve the measure  $\nu_T^{\cP,E}$, it follows that $\tilde \Phi_T^T$ is distributed as the measure \eqref{eq:nu-p-eps-cut-off}.
Reversing the direction of $t$ and setting  $\Phi_t^T = \tilde\Phi_{T-t}^T$ we obtain 
\begin{align} 
 \label{e:varphiT-SDE}
  \Phi_t^T
  &= \Phi_T^T - \int_0^{T-t} \dot c_{T-s} \nabla v_{T-s}^E(\tilde\Phi_s^T) \, ds + \int_0^{T-t} q_{T-s} \, d\tilde W_s^T
    \nnb
  &= \Phi_T^T - \int_t^T \dot c_{s} \nabla v_{s}^E(\Phi_s^T) \, ds + \int_t^T q_{s} \, dW_s
    .
\end{align}
Note that this yields a coupling of all solutions $\PhiE$, $\Phi^T$, $T>0$.
Therefore, we have with $\PhiE_\infty=0$ that
\begin{align}
  \PhiE_t - \Phi_t^T
  &= (\PhiE_\infty-\Phi_T^T)
  - \int_t^T \qa{\dot c_{s} \nabla \vE_{s}(\PhiE_s)-\dot c_{s} \nabla \vE_{s}(\Phi_s^T)} \, ds  \nnb
  & - \int_T^\infty \dot c_{s} \nabla \vE_{s}(\PhiE_s) \, ds + \int_T^\infty q_{s} \, dW_s.
\end{align}
We will show that as $T\to \infty$, we have $\| \PhiE_0 - \Phi_0^T\|_{L^2}\to 0$ in probability, from which we deduce that $\PhiE_0\sim \nu^{\cP,E}$. 
In what follows, we denote by $\| \dot c_t\|$ the operator norm of $\dot c_t$ when seen as an operator $L^2(\Omega_\epsilon) \to L^2(\Omega_\epsilon)$. 

The first, third, and fourth terms on the right-hand side above are independent of $t$ and
they converge to $0$ in probability as $T \to\infty$.
Indeed, for the first term this follows from the weak convergence of the
measure $\nu_T^{\cP,E}$ to $\delta_0$, e.g.,
in the sense of \cite[(1.6)]{MR4303014}.
The third term is bounded by
\begin{equation}
  \int_T^\infty \|\dot c_s \nabla v_s^E(\PhiE_s) \|_{L^2} \, ds
\end{equation}
which converges to $0$ in $L^1$ as $T\to \infty$ by \eqref{eq:gradient-bound} and the fact that $\|\dot c_t \| = O(1/t^2)$.
The fourth term is a Gaussian field on $\Omega_\epsilon$ with covariance matrix
$c_\infty - c_T \to 0$
as $T\to\infty$. Since $\Omega_\epsilon$ is finite,
it is a trivial consequence that 
this Gaussian field convergences to $0$.

In summary, we have shown that there is $\cR_T$ such that $\norm{\cR_T} \to 0$ in probability, and
\begin{equation}
  \PhiE_t - \Phi_t^T
  = - \int_t^T \qa{\dot c_{s} \nabla \vE_{s}(\PhiE_s)-\dot c_{s} \nabla \vE_{s}(\Phi_s^T)} \, ds  + \cR_T.
\end{equation}
For any  $t\geq 0$, we have by \eqref{eq:nabla-v-lipschitz} that
\begin{equation}
\int_t^T \| \dot c_s \nabla v_s^E (\PhiE_s) - \dot c_s \nabla v_s^E ( \Phi_s^T)\|_{L^2} ds
\lesssim_{\epsilon,E} \int_t^T \| \dot c_t \| \|\PhiE_s - \Phi_s^T \|_{L^2} ds
\end{equation}
so that we have with $M_t= \| \dot c_t \|$ 
\begin{equation}
  \int_t^T \|\dot c_{s} \nabla v_{s}^E(\Phi_s)-\dot c_{s} \nabla v_{s}^E(\Phi_s^T) \|_{L^2} \, ds
  \lesssim_{\epsilon,E} \int_t^T  M_s \norm{\PhiE_s-\Phi_s^T}_{L^2}  \, ds.
\end{equation}
Thus, we have shown that $\cD_t = \PhiE_{t}-\Phi_t^T$ satisfies
\begin{align}
  \norm{\cD_t}_{L^2}
  \lesssim_{\epsilon,E} \norm{\cR_T}_{L^2} +  \int_{t}^T  M_s \norm{\cD_s}_{L^2}  \, ds.
\end{align}
Since $\int_{0}^\infty M_s \, ds < \infty$, the same version of Gronwall's inequality as above implies that
\begin{equation}
  \norm{\cD_t}_{L^2}
  \lesssim_{\epsilon,E} \norm{\cR_T}_{L^2} \exp\pa{\int_t^T M_s \, ds}
  \leq \norm{\cR_T}_{L^2} \exp\pa{\int_{t_0}^\infty M_s \, ds}
  \lesssim \norm{\cR_T}_{L^2}.
\end{equation}
Since the right-hand side is uniform in $t\geq 0$, we conclude that $\sup_{t\geq 0} \| \cD _t\|_{L^2} \to 0$ in probability as $T\to \infty$. 
\end{proof}

\subsection{A stochastic control representation via the Bou\'e-Dupuis formula}
\label{ssec:boue-dupuis}

We now turn to a stochastic control representation of the renormalised potential  $v_t^{\epsilon,E}$ defined in \eqref{eq:pphi-renormalised-potential-cut-off} based on the Bou\'e-Dupuis variational formula,
which allows to express the moment generating function for functionals of Brownian motion as an expectation of the shifted functional and a drift term. 
For this, we interpret the drift part of our SDE as a minimiser of the control problem.
This correspondence is known as the verification principle in stochastic control theory, see \cite[Section 4]{MR2976505}. 
To connect to the Polchinski renormalisation group approach of Section \ref{ssec:polchinski} and also to the notation in \cite{MR4173157} we write
\begin{equation}
\label{eq:Y-definition}
Y_t^\epsilon
=
\int_0^t q_s^\epsilon d W_s^\epsilon = \Phi_0^{\GFF_\epsilon} - \Phi_t^{\GFF_\epsilon}
\end{equation}
for the small scale of the Gaussian free field process $\Phi^{\GFF_\epsilon}$. Note that $Y_t^\epsilon$ is a Gaussian field with covariance $c_t^\epsilon$, which follows from standard stochastic analysis results.
Thus, for $\phi\in X_\epsilon$, the renormalised potential $v_t^{\epsilon,E}$ can be likewise expressed as
\begin{equation}
e^{-v_t^{\epsilon,E}(\phi)}
=
\E [e^{-v_0^{\epsilon,E}(Y_t^\epsilon + \phi)}].
\end{equation}
The right-hand side is now expressed as a measurable functional of Brownian motions.
One can then exploit continuous-time martingale techniques, in particular Girsanov's theorem, to analyse this expectation.
This underlies the stochastic control representation that we now present.

Let $\HH_a$ be the space of progressively measurable (with respect to the backward filtration $\cF^t$) processes that are a.s.\ in $L^2(\R_0^+ \times \Omega_\epsilon)$, i.e.\ 
$u \in \HH_a$ if and only if $u|_{[t,\infty)}$ is $\cB([t,\infty)) \otimes \cF^t$-measurable for every $t\geq 0$ and 
\begin{equation}
\int_{0}^\infty \|u_\tau\|_{L^2}^2 d\tau < \infty \quad \text{a.s.}
\end{equation}
Here $\cB([t,\infty))$ denotes the Borel $\sigma$-algebra on $[t,\infty)$. 
The restriction of $\mathbb{H}_a$ to a finite interval $[0,t]$ is denoted $\mathbb{H}_a[0,t]$ with the convention $\mathbb{H}_a[0,\infty]= \mathbb{H}_a$. 
We will refer to elements $u\in \HH_a$ as drifts. 
For $u\in \HH_a$ and $0\leq s \leq t \leq \infty$ define the integrated drift
\begin{align}
I_{s,t}^\epsilon(u) &= \int_s^t q_\tau^\epsilon u_\tau d\tau
\end{align}
with the convention $I_{0,t}^\epsilon(u) \equiv I_{t}^\epsilon(u)$.
The following proposition is the Bou\'e-Dupuis formula for the renormalised potential.
We state it in the conditional form in order to be able to draw the correct comparison to the Polchinski renormalisation group approach afterwards. 

\begin{proposition}[Bou\'e-Dupuis formula] 
\label{prop:boue-dupuis-formula}
Let $\epsilon>0$. Then the conditional Bou\'e-Dupuis formula holds for the renormalised potential $v_t^{\epsilon,E}$, i.e.\ for 
$t\in [0, \infty]$
\begin{equation}
\label{eq:boue-dupuis}
 -\log \E \big[e^{-v_0^{\epsilon,E}(Y_t^\epsilon +  \Phi_t^{\cP_\epsilon,E})} \bigm | \cF^t\big ]	 
=
 \inf_{u \in \mathbb{H}_a[0,t]} \E \Big[v_0^{\epsilon,E} \big (Y_t^\epsilon + \Phi_t^{\cP_\epsilon,E} +  I_t^\epsilon(u)\big) + \frac{1}{2} \int_0^t \|u_s\|_{L^2}^2 ds \bigm | \cF^t\Big].
\end{equation}
\end{proposition}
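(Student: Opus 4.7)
The plan is to first establish the variational identity for $v_t^{\epsilon,E}(\phi)$ at a deterministic $\phi\in X_\epsilon$ via the unconditional Bou\'e--Dupuis formula, and then lift this identity to the conditional statement using the independence of the forward small-scale field $Y_t^\epsilon$ from the backward filtration $\cF^t$.

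\emph{Step 1: Freezing the backward scales.} Because $Y_t^\epsilon = \int_0^t q_s^\epsilon\,dW_s^\epsilon$ is measurable with respect to the $\sigma$-algebra generated by $W|_{[0,t]}$, it is independent of $\cF^t$. Meanwhile $\Phi_t^{\cP_\epsilon,E}$ is $\cF^t$-measurable, since it is constructed in Proposition \ref{prop:sde-well-posed-cut-off} from the backward SDE initialised at $t=\infty$. Together with the defining identity \eqref{eq:pphi-renormalised-potential-cut-off} this yields
\begin{equs}
-\log \E\big[e^{-v_0^{\epsilon,E}(Y_t^\epsilon+\Phi_t^{\cP_\epsilon,E})}\,\big|\,\cF^t\big]
= v_t^{\epsilon,E}\big(\Phi_t^{\cP_\epsilon,E}\big),
\end{equs}
reducing the proof to establishing the variational identity for $v_t^{\epsilon,E}(\phi)$ at a deterministic $\phi\in X_\epsilon$.

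\emph{Step 2: Unconditional Bou\'e--Dupuis.} For fixed $\phi$, apply the classical Bou\'e--Dupuis formula to the functional $W\mapsto v_0^{\epsilon,E}(Y_t^\epsilon+\phi)$ of the Brownian path restricted to $[0,t]$. Under the Cameron--Martin shift $W\mapsto W+\int_0^\cdot u_s\,ds$ with drift $u\in\HH_a[0,t]$, the process $Y_t^\epsilon$ is translated by $I_t^\epsilon(u)=\int_0^t q_s^\epsilon u_s\,ds$, and the Girsanov cost is $\tfrac12\int_0^t\|u_s\|_{L^2}^2\,ds$, producing
\begin{equs}
v_t^{\epsilon,E}(\phi)
= \inf_{u\in\HH_a[0,t]}\E\Big[v_0^{\epsilon,E}\big(Y_t^\epsilon+\phi+I_t^\epsilon(u)\big)+\tfrac12\int_0^t\|u_s\|_{L^2}^2\,ds\Big].
\end{equs}

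\emph{Step 3: Lifting to the conditional version.} Evaluating both sides at $\phi=\Phi_t^{\cP_\epsilon,E}$ and using that $\Phi_t^{\cP_\epsilon,E}$ is $\cF^t$-measurable while the drifts and $Y_t^\epsilon$ are independent of $\cF^t$, the expectations on the right-hand side are unchanged when replaced by conditional expectations given $\cF^t$. A standard measurable-selection argument lets us pull the $\cF^t$-measurable random field $\Phi_t^{\cP_\epsilon,E}$ inside the infimum, producing \eqref{eq:boue-dupuis}.

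\emph{Main obstacle.} The functional $v_0^{\epsilon,E}$ is bounded above by $E$ but \emph{not} bounded from below, so the most elementary form of the Bou\'e--Dupuis formula (which presumes $F\geq -C$) does not apply verbatim. Nonetheless, $\Omega_\epsilon$ is finite and $v_0^{\epsilon,E}$ is the $\chi_E$-truncation of a Wick-renormalised polynomial of even degree with positive leading coefficient; hence Nelson-type Gaussian integrability bounds yield that $e^{-v_0^{\epsilon,E}}$ and all the terms appearing in the variational expression are integrable. I would therefore invoke the extension of the Bou\'e--Dupuis formula developed in \cite{BGVariational2019}, which is tailored precisely to polynomial functionals in the $\cP(\phi)_2$ setting, to rigorously justify Step~2.
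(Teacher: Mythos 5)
Your proposal follows essentially the same route as the paper: reduce the conditional expectation to a deterministic parameter $\phi$ via the independence of $Y_t^\epsilon$ from $\cF^t$ and the $\cF^t$-measurability of $\Phi_t^{\cP_\epsilon,E}$, invoke the unconditional Bou\'e--Dupuis formula for $-\log\E[e^{-v_0^{\epsilon,E}(Y_t^\epsilon+\phi)}]$, and then evaluate at $\phi=\Phi_t^{\cP_\epsilon,E}$. The paper's actual proof is exactly this, citing \cite[Theorem 8.3]{boue1998variational} for the unconditional formula.

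One remark on your ``main obstacle'': the concern is misplaced. You state that $v_0^{\epsilon,E}$ is bounded above by $E$ but not bounded from below, because $\chi_E(x)=x$ for $x\leq E/2$. But what matters is the composition $v_0^{\epsilon,E}=\chi_E\circ v_0^\epsilon$ as a function of $\phi\in X_\epsilon$, and $v_0^\epsilon(\phi)=\epsilon^2\sum_{x\in\Omega_\epsilon}\wick{\cP(\phi(x))}_\epsilon$ is itself bounded below: $\wick{\cP(\cdot)}_\epsilon$ is a one-variable polynomial of even degree with positive leading coefficient, hence bounded below by some $-C_\epsilon$, and $\Omega_\epsilon$ is a finite set. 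Therefore $v_0^{\epsilon,E}(\phi)\geq -|\Omega|C_\epsilon$ uniformly in $\phi$ (the constant depending on $\epsilon$, which is fine since $\epsilon$ is fixed here). Thus $v_0^{\epsilon,E}$ is globally bounded and the classical Bou\'e--Dupuis formula applies directly, without needing the extension from \cite{BGVariational2019}. That extension is genuinely needed for the $\epsilon$-uniform estimates later (e.g., in Section \ref{ssec:l2-estimates}), but not to establish the present identity at fixed $\epsilon>0$ and $E>0$. This is precisely why the paper introduced the cut-off $\chi_E$ in the first place: it tames the upper growth of the polynomial, which is what could otherwise spoil the integrability needed for the variational representation.
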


\begin{proof}
Since $Y_t^\epsilon$ is independent of $\Phi_t^{\cP_\epsilon,E}$ we have by standard properties of conditional expectation 
\begin{align}
-\log \E\big[ e^{-v_0^{\epsilon,E}(Y_t^{\epsilon}+\Phi_t^{\cP_\epsilon,E})} \bigm | \cF^t \big] = -\log \E\big[e^{-v_0^{\epsilon,E}(Y_t^{\epsilon}+\phi)}\big]_{\phi=\Phi_t^{\cP_\epsilon,E}}.
\end{align}
From \cite[Theorem 8.3]{MR1675051}, we have that for a deterministic $\phi\in X_\epsilon$ the unconditional expectation on right-hand side of this display is equal to
\begin{align} 
\label{bd1}
-\log \E\big[ e^{-v_0^{\epsilon,E}(Y_t^\epsilon+\phi)} \big] 
&=
\inf_{u \in \mathbb{H}_{a}[0,t]} \E\Big[ v_0^{\epsilon,E}(Y_t^\epsilon+\phi+I_t^\epsilon(u))+\frac{1}{2}\int_{0}^{t}\|u_s \|^{2}_{L^{2}(\Omega_{\epsilon})} ds\Big],
\end{align}
from which the claim follows.
\end{proof}

\subsection{Correspondence between the Polchinski and the Bou\'e-Dupuis representations}
\label{ssec:correspondence}

The following proposition describes the equivalence between the Polchinski renormalisation group dynamics and the stochastic control representation via the Bou\'e-Dupuis formula in the presence of a potential cut-off $E > 0$. 


%

\begin{proposition}
\label{prop:bd-polchinski-correspondence}
Let $\epsilon > 0$, $E\in (0,\infty)$ and
let $\Phi^{\cP_\epsilon,E} \in C_0([0,\infty), X_\epsilon)$ be the unique strong solution to 
\eqref{eq:SDE-polchinski-cut-off}. 
Let $\minimiserE\colon [0,t]\times\Omega_\epsilon \to \R$ denote the process defined by
\begin{equation}
\minimiserE_s
=
-q_s^\epsilon \nabla v_s^{\epsilon,E} (\Phi_s^{\cP_\epsilon,E}),
\qquad s\in [0,\infty).	
\end{equation}
Then $u^E|_{[0,s]}$ is a minimiser of the conditional Bou\'e-Dupuis variational formula \eqref{eq:boue-dupuis}.
In particular, the relation between $u^E$ and the difference field $\Phi^{\Delta_\epsilon,E}$ is given by
\begin{equation}
\Phi_t^{\Delta_\epsilon,E} = \int_t^\infty q_s^\epsilon \minimiserE_s ds.
\end{equation}
\end{proposition}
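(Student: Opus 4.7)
The plan is to verify directly that $u^E$ attains the infimum in Proposition \ref{prop:boue-dupuis-formula}. Since the left-hand side of \eqref{eq:boue-dupuis} equals $v_t^{\epsilon, E}(\Phi_t^{\cP_\epsilon, E})$ by the definition \eqref{eq:pphi-renormalised-potential-cut-off} of the renormalised potential, it suffices to show that the Bou\'e-Dupuis objective
\[
J(u) := \E\Big[v_0^{\epsilon, E}(Y_t^\epsilon + \Phi_t^{\cP_\epsilon, E} + I_t^\epsilon(u)) + \tfrac{1}{2}\int_0^t \|u_s\|_{L^2}^2\, ds \,\Big|\, \cF^t\Big]
\]
evaluated at the candidate $u^E$ equals $v_t^{\epsilon, E}(\Phi_t^{\cP_\epsilon, E})$ almost surely.

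\textbf{Reduction via the SDE.} Using $\dot c_s^\epsilon = q_s^\epsilon \ast q_s^\epsilon$ and the self-adjointness of $q_s^\epsilon$ (as a Fourier multiplier), we have $I_t^\epsilon(u^E) = -\int_0^t \dot c_s^\epsilon \nabla v_s^{\epsilon, E}(\Phi_s^{\cP_\epsilon, E})\, ds$. Subtracting the integral representation \eqref{eq:SDE-polchinski-cut-off} at times $0$ and $t$ yields $\Phi_0^{\cP_\epsilon, E} - \Phi_t^{\cP_\epsilon, E} = -\int_0^t \dot c_s^\epsilon \nabla v_s^{\epsilon, E}(\Phi_s^{\cP_\epsilon, E})\, ds + Y_t^\epsilon$, so
\[
Y_t^\epsilon + \Phi_t^{\cP_\epsilon, E} + I_t^\epsilon(u^E) = \Phi_0^{\cP_\epsilon, E}.
\]
Substituting into the objective and using $\|u_s^E\|_{L^2}^2 = \|q_s^\epsilon \nabla v_s^{\epsilon, E}(\Phi_s^{\cP_\epsilon, E})\|_{L^2}^2$ collapses $J(u^E)$ to
\[
J(u^E) = \E\Big[v_0^{\epsilon, E}(\Phi_0^{\cP_\epsilon, E}) + \tfrac{1}{2}\int_0^t \|q_s^\epsilon \nabla v_s^{\epsilon, E}(\Phi_s^{\cP_\epsilon, E})\|_{L^2}^2\, ds \,\Big|\, \cF^t\Big].
\]
The task thus reduces to proving that this conditional expectation coincides with $v_t^{\epsilon, E}(\Phi_t^{\cP_\epsilon, E})$.

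\textbf{It\^o identity via time reversal.} Since $\Phi^{\cP_\epsilon, E}$ is backward-adapted, a direct forward It\^o computation is unavailable, which is the main obstacle. We circumvent it by passing to a time-reversed frame. Conditional on $\cF^t$, the process $\tilde\Phi_\sigma := \Phi_{t-\sigma}^{\cP_\epsilon, E}$ for $\sigma \in [0, t]$ starts at $\tilde\Phi_0 = \Phi_t^{\cP_\epsilon, E}$ and satisfies the forward SDE
\[
d\tilde\Phi_\sigma = -\dot c_{t-\sigma}^\epsilon \nabla v_{t-\sigma}^{\epsilon, E}(\tilde\Phi_\sigma)\, d\sigma + q_{t-\sigma}^\epsilon\, d\bar W_\sigma,
\]
where $\bar W_\sigma := W^\epsilon_t - W^\epsilon_{t-\sigma}$ is a Brownian motion forward-adapted on $[0, t]$ and independent of $\cF^t$. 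Applying It\^o's formula to $v_{t-\sigma}^{\epsilon, E}(\tilde\Phi_\sigma)$ and substituting the Polchinski equation \eqref{eq:polchinski-pphi-cut-off} produces three contributions: the time derivative $\partial_\sigma v_{t-\sigma}^{\epsilon, E} = -\tfrac{1}{2}\Delta_{\dot c_{t-\sigma}^\epsilon}v_{t-\sigma}^{\epsilon, E} + \tfrac{1}{2}\|q_{t-\sigma}^\epsilon \nabla v_{t-\sigma}^{\epsilon, E}\|_{L^2}^2$, the drift cross-term $\langle \nabla v_{t-\sigma}^{\epsilon, E}, -\dot c_{t-\sigma}^\epsilon \nabla v_{t-\sigma}^{\epsilon, E}\rangle = -\|q_{t-\sigma}^\epsilon \nabla v_{t-\sigma}^{\epsilon, E}\|_{L^2}^2$, and the It\^o correction $+\tfrac{1}{2}\Delta_{\dot c_{t-\sigma}^\epsilon}v_{t-\sigma}^{\epsilon, E}$. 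The two $\Delta$-terms cancel, leaving
\[
d\,v_{t-\sigma}^{\epsilon, E}(\tilde\Phi_\sigma) = -\tfrac{1}{2}\|q_{t-\sigma}^\epsilon \nabla v_{t-\sigma}^{\epsilon, E}(\tilde\Phi_\sigma)\|_{L^2}^2\, d\sigma + \langle \nabla v_{t-\sigma}^{\epsilon, E}(\tilde\Phi_\sigma),\, q_{t-\sigma}^\epsilon\, d\bar W_\sigma\rangle.
\]
Integrating from $\sigma = 0$ to $t$, changing variables $s = t-\sigma$, and taking $\E[\cdot|\cF^t]$ produces the required identity; the stochastic integral has vanishing conditional expectation thanks to the uniform gradient bound \eqref{eq:gradient-bound} of Lemma \ref{lem:estimates-hess-nabla}. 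Combining with the previous step gives $J(u^E) = v_t^{\epsilon, E}(\Phi_t^{\cP_\epsilon, E})$, so $u^E$ is a minimiser. The representation $\Phi_t^{\Delta_\epsilon, E} = \int_t^\infty q_s^\epsilon u_s^E\, ds$ is immediate from the definitions and $\dot c_s^\epsilon = q_s^\epsilon \ast q_s^\epsilon$ once $u^E$ is extended to $[0, \infty)$ by the same formula.
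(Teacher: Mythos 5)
Your proof is correct and follows essentially the same route as the paper's: both verify the candidate minimiser directly by applying It\^o's formula in a time-reversed frame to $v^{\epsilon,E}_{t-\sigma}(\Phi^{\cP_\epsilon,E}_{t-\sigma})$, cancelling the Laplacian terms via the Polchinski equation \eqref{eq:polchinski-pphi-cut-off}, and taking conditional expectation given $\cF^t$ using the gradient bound of Lemma \ref{lem:estimates-hess-nabla} to kill the martingale. The only omission, which the paper makes explicit, is the one-line check that $u^E \in \HH_a[0,t]$ (progressive measurability from $\cF^t$-adaptedness of $\Phi^{\cP_\epsilon,E}$ and $L^2$-integrability from \eqref{eq:gradient-bound} together with $\|\dot c_t\| = O(1/t^2)$), which is needed so that the candidate is admissible in the infimum.
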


\begin{proof}
Since $(\Phi_t^{\cP_\epsilon,E})_{t\in [0,\infty]}$ is $\cF^t$-measurable and continuous, it follows that $u^{\epsilon,E} \in \HH_a^\epsilon[0,t]$ for any $t\in [0,\infty]$.
To ease the notation, we drop $\epsilon$ throughout this proof.
Applying  It\^o's formula to $\vE_{T-\tau}(\PhiE_{T-\tau})$ for $ \tau \leq T<\infty$ and substituting $t=T-\tau$ thereafter, we obtain
\begin{align}
d \vE_t(\PhiE_t) &= - \nabla \vE_t(\PhiE_t) \dot c_t \nabla \vE_t(\PhiE_t) dt - \partial_t \vE_t(\PhiE_t) dt \nnb 
& +\frac{1}{2} \tr\big(\He \vE_t \dot c_t\big) dt
+\nabla \vE_t(\PhiE_t) \dot c_t^{1/2} dW_t \nnb
&= -\big(\nabla \vE_t (\PhiE_t)\big)_{\dot c_t}^2 - \partial_t \vE_t(\PhiE_t) dt + \frac{1}{2} \Delta_{\dot c_t} \vE_t(\PhiE_t) dt  + \nabla \vE_t(\PhiE_t) \dot c_t^{1/2} dW_t\nnb
&= -\frac{1}{2} \big(\nabla \vE_t (\PhiE_t)\big)_{\dot c_t}^2 dt + \nabla \vE_t(\PhiE_t) \dot c_t^{1/2} dW_t,
\end{align}
where we used the Polchinski equation \eqref{eq:polchinski-pphi-cut-off} in the last step. Integrating from $0$ to $t$ and taking conditional expectation yields
\begin{equation}
\label{eq:ito-conditional-expectation}
\E \Big[ \vE_0(\PhiE_0) - \vE_t (\PhiE_t) \bigm | \cF^t \Big] = - \frac{1}{2}\E \Big[ \int_0^t \big(\nabla \vE_s (\PhiE_s)\big)_{\dot c_s}^2  ds \bigm | \cF^t \Big],
\end{equation}
where we used that for $t\in [0,\infty]$
\begin{equation}
\E \Big[ \int_0^t \nabla \vE_s(\PhiE_s) \dot c_s^{1/2} d W_s \bigm | \cF^t \Big] = 0
\end{equation}
which holds by \eqref{eq:gradient-bound} 
and the fact that $\|\dot c_t \| = O(1/t^2)$.
Since $Y_t$ is independent of $\cF^t$ and $\PhiE_t$ is $\cF^t$-measurable, we have by standard properties of conditional expectation
\begin{equation}
e^{-\vE_t(\PhiE_t)}  = \E\Big [e^{-\vE_0(Y_t + \PhiE_t)} \bigm | \cF^t \Big].
\end{equation}
Therefore, we obtain from \eqref{eq:ito-conditional-expectation}
\begin{align}
\E &\Big[ \frac{1}{2}\int_0^t \big(\nabla \vE_s (\PhiE_s)\big)_{\dot c_s}^2  ds \bigm | \cF^t \Big] =  \vE_t(\PhiE_t) - \E \Big[   \vE_0(\PhiE_0) \bigm | \cF^t\Big] \nnb
&=  -  \log \E\Big [e^{-\vE_0(Y_t^\epsilon + \PhiE_t)} \bigm | \cF^t \Big] - \E \Big[  \vE_0\Big(\int_0^\infty q_t dW_t - \int_0^\infty \dot c_t \nabla \vE_t(\PhiE_t)dt \Big) \bigm |\cF^t \Big].
\end{align}
Rearranging this and using \eqref{eq:Y-definition}  show that
\begin{align}
-  \log \E\Big [e^{-\vE_0(Y_t^\epsilon + \PhiE_t)} \bigm | \cF^t \Big] 
&= \E \Big[   \vE_0\Big(\int_0^\infty q_t dW_t - \int_0^\infty \dot c_t \nabla \vE_t(\PhiE_t)dt \Big) + \frac{1}{2}\int_0^t \big(\nabla \vE_s (\PhiE_s)\big)_{\dot c_s}^2  ds \bigm | \cF^t \Big] \nnb
& = \E \Big[   \vE_0\Big( Y_t^\epsilon + \PhiE_t - \int_0^t \dot c_s \nabla \vE_s(\PhiE_s)ds   \Big)+ \frac{1}{2}\int_0^t \big(\nabla \vE_s (\PhiE_s)\big)_{\dot c_s}^2  ds \bigm | \cF^t \Big],
\end{align}
which proves that $\minimiserE_s = -q_s \nabla \vE_s(\PhiE_s)$, $s\in [0,t]$ is a minimiser for  \eqref{eq:boue-dupuis}.
\end{proof}

\section{Fractional moment estimate on the renormalised potential}

In this section we prove a fractional moment estimate on the small-scale behaviour of the renormalised potential.
We exploit the connection between the Polchinski dynamics and a minimiser of the Bou\'e-Dupuis variational problem \eqref{eq:boue-dupuis} as in Proposition \ref{prop:bd-polchinski-correspondence} to transfer estimates on minimisers onto the renormalised potential.
First, we prove some a priori bounds on Sobolev norms of the integrated drift. 

\subsection{Sobolev norms of integrated drifts}
\label{sec:sobolev-norms-int-drift}

Recall that, for $0\leq s\leq t \leq \infty$ and $u\in \HH_a$, the integrated drift is defined by
\begin{equs}
I_{s,t}^\epsilon(u) 
= 
\int_s^t q_\tau^\epsilon u_\tau d\tau, \qquad q_\tau^\epsilon =\big (\tau (-\Delta^\epsilon + m^2) + 1\big)^{-1}
\end{equs}
and that  $u$ is  a.s.\ $L^2$ integrable on $[0,\infty]$, i.e.\ $\int_0^\infty \|u_\tau \|_{L^2}^2 d\tau<\infty$ a.s. In Fourier space this condition reads as 
\begin{equs}
\int_0^\infty \|u_\tau \|_{L^2}^2 d\tau = \sum_{k\in \Omega_\epsilon^*}\int_0^\infty |\hat u_\tau(k)|^2 d\tau <\infty \quad \text{a.s.,}
\end{equs}
where $\hat u_\tau(k)$, $k\in \Omega_\epsilon^*$ denote the Fourier coefficients of $u_\tau$. 

In what follows we will discuss the regularity of $I_{s,t}^\epsilon(u)$ for different choices of $s,t$, for which we use the Sobolev norm defined in \eqref{eq:sobolev-norm-def}. Note that we have
\begin{equs}
\|I_{s,t}^\epsilon(u) \|_{H^\alpha}^2= \sum_{k\in \Omega_\epsilon^*}    (1+|k|^2)^\alpha | \widehat{ I_{s,t}^\epsilon(u) } |^2 =\sum_{k\in \Omega_\epsilon^*}    (1+|k|^2)^\alpha \Big| \int_s^t \hat q_\tau^\epsilon(k) \hat u_\tau(k) d\tau\Big|^2.
\end{equs}
In addition, recall that the Fourier multipliers of $q_\tau^\epsilon$ are given by
\begin{equs}
\hat q_\tau^\epsilon(k) 
= 
\frac{1}{\tau (- \hat \Delta^\epsilon (k) + m^2) + 1}, \qquad k\in \Omega_\epsilon^*,
\end{equs}
where $-\hat \Delta^\epsilon(k)$ are as in \eqref{eq:fourier-multipliers-discrete-laplacian}. Using \eqref{eq:fourier-multipliers-laplacians-difference}, we have that
\begin{equs}
-\hat \Delta^\epsilon (k) 
\geq
 c |k|^2 
\end{equs}
for $k\in \Omega_\epsilon^*$ with $c=\frac{4}{\pi^2}$. Hence,  we have for $k\in \Omega_\epsilon^*$
\begin{equs}
\label{eq:q-fourier-multipliers}
\hat q_\tau^\epsilon(k) 
\leq
 \frac{1}{\tau (c|k|^2 + m^2) + 1}.
\end{equs}

The following results establish bounds for Sobolev norms of  the large  and small scales of  $I^\epsilon(u)$ for a given drift $u\in \HH_a$. For the rest of this subsection we drop $\epsilon>0$ from the notation.

\begin{lemma}
\label{lem:integrated-drift-large-scales-h2}
Let $\alpha \in [1,2]$. Then,  for {$0<t<1$},
\begin{equs}
\| I_{t,\infty} (u) \|_{H^\alpha}^2 
\lesssim_{\alpha} 
\frac{1}{t^{\alpha -1}} \int_t^\infty \| u_\tau\|_{L^2}^2 d\tau.
\end{equs}
\end{lemma}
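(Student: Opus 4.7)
The plan is to work in Fourier space and reduce the estimate to a pointwise bound on the Fourier side. Writing
\[
\|I_{t,\infty}(u)\|_{H^\alpha}^2 = \sum_{k \in \Omega_\epsilon^*} (1+|k|^2)^\alpha \left| \int_t^\infty \hat q_\tau(k)\, \hat u_\tau(k)\, d\tau \right|^2,
\]
Cauchy--Schwarz in the $\tau$ variable gives
\[
\left| \int_t^\infty \hat q_\tau(k)\, \hat u_\tau(k)\, d\tau \right|^2 \leq \left(\int_t^\infty \hat q_\tau(k)^2\, d\tau\right) \left(\int_t^\infty |\hat u_\tau(k)|^2\, d\tau\right).
\]
Therefore, swapping sum and $\tau$-integral by Tonelli and using Parseval on the $u$-factor, it suffices to establish the uniform pointwise bound
\[
\sup_{k \in \Omega_\epsilon^*} (1+|k|^2)^\alpha \int_t^\infty \hat q_\tau(k)^2\, d\tau \lesssim_{m,\alpha} \frac{1}{t^{\alpha-1}}, \qquad 0 < t < 1.
\]

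To prove this, I would set $a = a(k) := -\hat\Delta^\epsilon(k) + m^2$, noting that by \eqref{eq:fourier-multipliers-laplacians-difference} and the mass term one has $a \gtrsim_m 1 + |k|^2$. An explicit computation then yields
\[
\int_t^\infty \hat q_\tau(k)^2\, d\tau = \int_t^\infty \frac{d\tau}{(a\tau + 1)^2} = \frac{1}{a(at+1)},
\]
so the required pointwise estimate reduces to
\[
\frac{(1+|k|^2)^\alpha}{a(at+1)} \lesssim_{m,\alpha} \frac{1}{t^{\alpha-1}}.
\]

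The verification of this splits into two complementary regimes. In the low-frequency regime $|k|^2 \leq 1/t$ one has $at \lesssim_m 1$ so that $at + 1 \lesssim_m 1$, and $a \gtrsim_m 1 + |k|^2$ gives
\[
\frac{(1+|k|^2)^\alpha}{a(at+1)} \lesssim_m (1+|k|^2)^{\alpha-1} \leq (1 + 1/t)^{\alpha-1} \lesssim_\alpha \frac{1}{t^{\alpha-1}},
\]
using $t < 1$ to absorb the constant. In the high-frequency regime $|k|^2 > 1/t$ one has $at \gtrsim_m 1$, hence $at + 1 \lesssim_m at$, so
\[
\frac{(1+|k|^2)^\alpha}{a(at+1)} \lesssim_m \frac{(1+|k|^2)^{\alpha-2}}{t}.
\]
Since $\alpha - 2 \leq 0$ and $1 + |k|^2 > 1/t$, the factor $(1+|k|^2)^{\alpha-2}$ is bounded by $t^{2-\alpha}$, yielding $t^{1-\alpha} = 1/t^{\alpha-1}$ again. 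The condition $\alpha \in [1,2]$ is used precisely here: $\alpha \geq 1$ makes $(1+|k|^2)^{\alpha-1}$ increasing (low-frequency case) and $\alpha \leq 2$ makes $(1+|k|^2)^{\alpha-2}$ decreasing (high-frequency case). No single step is genuinely hard; the main thing to get right is matching the two endpoint exponents of $(1+|k|^2)$ against the right power of $t$, which is why the argument is sharpest at the crossover scale $|k|^2 \simeq 1/t$.
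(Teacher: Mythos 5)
Your proposal is correct and follows the same route as the paper: Cauchy--Schwarz in $\tau$, an explicit computation of $\int_t^\infty \hat q_\tau(k)^2\,d\tau$, and then a pointwise bound on the resulting Fourier multiplier using $a(k)\gtrsim_m 1+|k|^2$. The only cosmetic difference is that where you split into low- and high-frequency regimes at $|k|^2\simeq 1/t$, the paper compresses the same observation into the single elementary inequality $\sup_{x\geq 0}\frac{(1+x)^\beta}{1+tx}\lesssim_\beta t^{-\beta}$ for $t<1$.
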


\begin{proof}
By the Cauchy-Schwarz inequality and \eqref{eq:q-fourier-multipliers} we have
\begin{equs}
\| I_{t,\infty} (u) \|_{H^\alpha}^2 
&\leq
 \sum_{k\in \Omega_\epsilon^*}    (1+|k|^2)^\alpha  \big( \int_t^\infty |\hat q_\tau(k) \hat u_\tau(k)| d\tau \big)^2 
 \nnb
&\leq 
\sum_{k\in \Omega_\epsilon^*}(1+|k|^2)^\alpha \big( \int_t^\infty |\hat q_\tau(k) |^2 d\tau \big) \big ( \int_t^\infty |\hat u_\tau(k)|^2 d\tau \big) 
\nnb
& \leq 
\sum_{k\in \Omega_\epsilon^*}(1+|k|^2)^\alpha \frac{1}{(c|k|^2+ m^2)}\frac{1}{t(c|k|^2+ m^2) +1} \int_t^\infty |\hat u_\tau(k)|^2 d\tau 
\nnb
& \leq 
\sum_{k\in \Omega_\epsilon^*} \frac{(1+|k|^2)}{(c|k|^2+ m^2)}\frac{(1+|k|^2)^{\alpha-1}}{t(c|k|^2+ m^2) +1} \int_t^\infty |\hat u_\tau(k)|^2 d\tau 
\nnb
& \lesssim_{\alpha}
 \sum_{k\in \Omega_\epsilon^*} \frac{1}{t^{ \alpha-1}} \int_t^\infty |\hat u_\tau(k)|^2 d \tau = \frac{1}{t^{\alpha-1}} \int_t^\infty \|u_\tau\|_{L^2}^2 d\tau,
\end{equs}
where we used that $\sup_{x\geq 0}\frac{(1+x)^\beta}{1+tx} {\lesssim_\beta} \frac{1}{t^\beta}$ for $t<1$ and $\beta<1$.
\end{proof}

\begin{lemma}
\label{lem:integrated-drift-large-scales-h1}
Let $\alpha \in [0,1]$. Then,
\begin{equs}
\sup_{t\geq 0} \| I_{t,\infty} (u) \|_{H^\alpha}^2 \lesssim  \int_0^\infty \| u_\tau\|_{L^2}^2 d\tau.
\end{equs}
\end{lemma}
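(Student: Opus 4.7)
The plan is to follow essentially the same strategy as in Lemma \ref{lem:integrated-drift-large-scales-h2}, namely apply Cauchy--Schwarz in the time variable to peel off the $u_\tau$ factor, and then show that the resulting Fourier-space weight is uniformly bounded (rather than merely integrable with a $t$-dependent prefactor) once $\alpha \leq 1$.

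First, I would write, for any $t \geq 0$,
\begin{equs}
\|I_{t,\infty}(u)\|_{H^\alpha}^2
&\leq
\sum_{k \in \Omega_\epsilon^*} (1+|k|^2)^\alpha \Big(\int_t^\infty |\hat q_\tau(k)|^2\, d\tau\Big) \Big(\int_t^\infty |\hat u_\tau(k)|^2\, d\tau\Big).
\end{equs}
Using the bound \eqref{eq:q-fourier-multipliers} on $\hat q_\tau(k)$, the time integral can be evaluated:
\begin{equs}
\int_t^\infty |\hat q_\tau(k)|^2\, d\tau
\leq
\int_t^\infty \frac{d\tau}{(\tau(c|k|^2+m^2)+1)^2}
\leq
\frac{1}{c|k|^2+m^2}\cdot\frac{1}{t(c|k|^2+m^2)+1}.
\end{equs}

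Next I would use the regime $\alpha \in [0,1]$ to control the $k$-dependent weight. Since $\alpha \leq 1$,
\begin{equs}
\frac{(1+|k|^2)^\alpha}{c|k|^2+m^2}
\leq
\frac{1+|k|^2}{c|k|^2+m^2}
\lesssim_m 1,
\end{equs}
uniformly in $k\in \Omega_\epsilon^*$, and the remaining factor $1/(t(c|k|^2+m^2)+1)$ is bounded by $1$ for all $t\geq 0$. Plugging these bounds in gives
\begin{equs}
\|I_{t,\infty}(u)\|_{H^\alpha}^2
\lesssim_m
\sum_{k\in\Omega_\epsilon^*} \int_t^\infty |\hat u_\tau(k)|^2\, d\tau
=
\int_t^\infty \|u_\tau\|_{L^2}^2\, d\tau
\leq
\int_0^\infty \|u_\tau\|_{L^2}^2\, d\tau,
\end{equs}
and since the right-hand side is independent of $t$, taking the supremum over $t \geq 0$ yields the claim.

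There is no real obstacle here; the key simplification compared to Lemma \ref{lem:integrated-drift-large-scales-h2} is that for $\alpha \leq 1$ the mass term $m^2$ in the denominator fully absorbs $(1+|k|^2)^\alpha$, so one does not need to pay any $t^{-(\alpha-1)}$ factor and the bound is uniform in $t$. The only mild care needed is to note that the estimate holds at $t=0$ as well, which is automatic because the final bound does not blow up there.
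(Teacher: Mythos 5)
Your proof is correct and follows essentially the same route as the paper: apply Cauchy--Schwarz in time, compute $\int_t^\infty |\hat q_\tau(k)|^2\, d\tau$, and then observe that for $\alpha\leq 1$ the weight $(1+|k|^2)^\alpha/(c|k|^2+m^2)$ is bounded uniformly in $k$ with constant depending only on $m$, so the factor $1/(t(c|k|^2+m^2)+1)\leq 1$ can simply be discarded. No discrepancies to report.
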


\begin{proof}
Using the same calculations as above, we see that
\begin{equs}
\| I_{t,\infty} (u) \|_{H^\alpha}^2 &\leq \sum_{k\in \Omega_\epsilon^*}    (1+|k|^2)^\alpha  \big( \int_t^\infty |\hat q_\tau(k) \hat u_\tau(k)| d\tau \big)^2 \nnb
&\leq \sum_{k\in \Omega_\epsilon^*}(1+|k|^2)^\alpha \big( \int_t^\infty |\hat q_\tau(k) |^2 d\tau \big) \big ( \int_t^\infty |\hat u_\tau(k)|^2 d\tau \big) \nnb
& \leq \sum_{k\in \Omega_\epsilon^*}(1+|k|^2)^\alpha \frac{1}{(c|k|^2+ m^2)}\frac{1}{t(c|k|^2+ m^2) +1} \int_t^\infty |\hat u_\tau(k)|^2 d\tau \nnb
& \lesssim \sum_{k\in \Omega_\epsilon^*} \frac{(1+|k|^2)^\alpha}{(c|k|^2+ m^2)}\int_t^\infty |\hat u_\tau(k)|^2 d\tau \leq \int_t^\infty \|u_\tau\|_{L^2}^2 d\tau.
\end{equs}
Taking the supremum over $t\geq 0$ establishes the estimate.
\end{proof}

\begin{lemma}
\label{lem:integrated-drift-small-scales-h1}
For $0\leq \alpha \leq 1$ and $0\leq s\leq t$, we have
\begin{equs}
\| I_{s,t} (u) \|_{H^\alpha}^2
 \lesssim_{\alpha} 
 (t-s)^{1-\alpha} \int_s^t \|u_\tau\|_{L^2}^2 d\tau.
\end{equs}
\end{lemma}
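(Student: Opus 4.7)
The plan is to follow the Fourier-analytic strategy used in the two preceding lemmas, with the key new ingredient being an interpolation between two crude bounds on $\int_s^t |\hat q_\tau(k)|^2\,d\tau$ that produces the desired factor $(t-s)^{1-\alpha}$ while still carrying enough decay in $k$ to absorb the Sobolev weight.

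First I would expand the Sobolev norm in Fourier space and apply Cauchy--Schwarz in the time variable frequency by frequency, reducing the problem to
\[
\|I_{s,t}(u)\|_{H^\alpha}^2 \leq \sum_{k \in \Omega_\epsilon^*} (1+|k|^2)^\alpha \Big(\int_s^t |\hat q_\tau(k)|^2\,d\tau\Big)\Big(\int_s^t |\hat u_\tau(k)|^2\,d\tau\Big).
\]
It then suffices to show the uniform (in $k$) bound $(1+|k|^2)^\alpha \int_s^t |\hat q_\tau(k)|^2\,d\tau \lesssim_{m,\alpha} (t-s)^{1-\alpha}$, after which the outer sum telescopes into $\int_s^t \|u_\tau\|_{L^2}^2\,d\tau$ by Parseval, yielding the claim.

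The core step is the bound on $\int_s^t |\hat q_\tau(k)|^2 d\tau$. Setting $A(k) := c|k|^2 + m^2$ with $c=4/\pi^2$ (using the ellipticity bound \eqref{eq:q-fourier-multipliers}), I compute directly
\[
\int_s^t \frac{d\tau}{(\tau A(k)+1)^2} = \frac{t-s}{(sA(k)+1)(tA(k)+1)}.
\]
Two elementary upper bounds are immediate: the integral is at most $t-s$ (from $(sA+1)(tA+1)\geq 1$) and at most $1/A(k)$ (from $(tA+1)\geq (t-s)A(k)$). The crucial observation is then the elementary inequality $\min(x,y) \leq x^{1-\alpha} y^\alpha$, applied with $x = t-s$ and $y = 1/A(k)$, which gives $\int_s^t |\hat q_\tau(k)|^2\,d\tau \leq (t-s)^{1-\alpha} A(k)^{-\alpha}$.

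To close the argument I use the trivial bound $A(k) \gtrsim_m 1+|k|^2$, obtained by splitting into the cases $|k|\leq 1$ and $|k|\geq 1$, so that $(1+|k|^2)^\alpha$ is absorbed into $A(k)^\alpha$ up to a constant depending only on $m$ and $\alpha$. No real obstacle is expected here; the one point to be mildly careful about is that the interpolation must be uniform across the compact interval $\alpha \in [0,1]$ (it is, since both crude bounds are deterministic) and that the $m$-dependence in the final constant traces cleanly back to $A(k)\gtrsim_m 1+|k|^2$, which is consistent with the stated $\lesssim_{m,\alpha}$.
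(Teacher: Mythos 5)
Your proof is correct and follows essentially the same route as the paper's: expand $\|I_{s,t}(u)\|_{H^\alpha}^2$ in Fourier space, apply Cauchy--Schwarz in time at each frequency, and compute $\int_s^t|\hat q_\tau(k)|^2\,d\tau = \frac{t-s}{(sA(k)+1)(tA(k)+1)}$ explicitly. The only cosmetic difference is the final interpolation: you use $\min(t-s,1/A)\le (t-s)^{1-\alpha}A^{-\alpha}$ and then absorb $(1+|k|^2)^\alpha$ into $A^\alpha$, whereas the paper bounds $\frac{(1+|k|^2)^\alpha}{tA+1}\lesssim t^{-\alpha}$ and then uses $(t-s)/t^\alpha\le(t-s)^{1-\alpha}$ --- two equivalent ways of packaging the same estimate.
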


\begin{proof}
By the same arguments that were used in the proof of the previous statements, we obtain
\begin{equs}
\| I_{s,t} (u) \|_{H^\alpha}^2 
&\leq 
\sum_{k\in \Omega_\epsilon^*} (1+|k|^2)^{\alpha}  \Big ( \int_s^t |\hat q_\tau(k) \hat u_\tau(k)| d\tau \Big)^2  
\nnb
&\leq 
\sum_{k\in \Omega_\epsilon^*} (1+|k|^2)^{\alpha} \int_s^t \frac{1}{\tau(c|k|^2 + m^2) + 1  )^2} d\tau \int_s^t | \hat u_\tau(k)|^2 d\tau
 \nnb
& = 
\sum_{k\in \Omega_\epsilon^*} (1+|k|^2)^{\alpha} \Big [ \frac{1}{(c|k|^2 +m^2)} \Big( \frac{1}{s(c|k|^2 +m^2)+1} - \frac{1}{t(c|k|^2 +m^2)+1}\Big)\Big] \int_s^t |\hat u_\tau(k)|^2 d\tau 
\nnb
&= 
\sum_{k\in \Omega_\epsilon^*} (1+|k|^2)^{\alpha} \Big [ \frac{t-s}{\big(s(c|k|^2 +m^2) + 1\big)\big( t(c|k|^2 +m^2) + 1 \big) } \Big] \int_s^t |\hat u_\tau(k)|^2 d\tau 
\nnb
&  \leq 
( t -s) \sum_{k \in \Omega_\epsilon^*} \frac{(1 + |k|^2)^\alpha}{t(c|k|^2 +m^2)+1} \int_s^t | \hat u_\tau(k)|^2 d\tau 
\nnb
&\lesssim_{\alpha} 
(t-s) \frac{1}{t^\alpha} \int_s^t \|u_\tau\|_{L^2}^2 d\tau \leq (t-s)^{1-\alpha} \int_s^t \|u_\tau\|_{L^2}^2 d\tau.
\end{equs}
\end{proof}

\begin{lemma}
\label{lem:integrated-drift-small-scales-h2}
For $\alpha \in(1, 2]$ and $0<s\leq t$, we have
\begin{equs}
\| I_{s,t} (u) \|_{H^\alpha}^2 
\lesssim_{\alpha} 
\frac{t-s}{s^\alpha} \int_s^t \|u_\tau\|_{L^2}^2 d\tau .
\end{equs}
\end{lemma}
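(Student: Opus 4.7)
\medskip

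My plan is to carry out the same Fourier-side computation as in the proof of Lemma \ref{lem:integrated-drift-small-scales-h1}, but for the range $\alpha \in (1,2]$ the multiplier $(1+|k|^2)^{\alpha}$ no longer competes with just one factor of $s(c|k|^2+m^2)+1$ in the denominator, and I will need to extract the remaining power of $s^{-(\alpha-1)}$ from a simple one-variable calculus bound.

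First, starting from
\begin{equs}
\| I_{s,t}(u) \|_{H^\alpha}^2
=
\sum_{k \in \Omega_\epsilon^*} (1+|k|^2)^\alpha \Big|\int_s^t \hat q_\tau(k) \hat u_\tau(k)\, d\tau\Big|^2,
\end{equs}
I would apply the Cauchy--Schwarz inequality in $\tau$, then use the pointwise bound $\hat q_\tau(k) \leq (\tau(c|k|^2+m^2)+1)^{-1}$ from \eqref{eq:q-fourier-multipliers} together with the explicit antiderivative
\begin{equs}
\int_s^t \frac{d\tau}{(\tau(c|k|^2+m^2)+1)^2}
=
\frac{t-s}{(s(c|k|^2+m^2)+1)(t(c|k|^2+m^2)+1)},
\end{equs}
exactly as in the previous lemma. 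This brings out the factor $t-s$ and reduces the whole proof to the pointwise Fourier-multiplier bound
\begin{equs}
\label{eq:plan-fourier-multiplier}
M(k) \coloneqq \frac{(1+|k|^2)^\alpha}{(s(c|k|^2+m^2)+1)(t(c|k|^2+m^2)+1)} \lesssim_{m,\alpha} \frac{1}{s^\alpha},
\end{equs}
uniformly in $k \in \Omega_\epsilon^*$.

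Since $s \leq t$ it suffices to establish \eqref{eq:plan-fourier-multiplier} with both denominators replaced by $(s(c|k|^2+m^2)+1)^2$, so the task becomes
\begin{equs}
\sup_{y \geq 0} \frac{(1+y)^\alpha}{(s(cy+m^2)+1)^2} \lesssim_{m,\alpha} \frac{1}{s^\alpha}
\end{equs}
with $y = |k|^2$. I would split on the size of $s$: for small $s$ (say $s \leq 1$) a direct differentiation shows the maximum is attained at an interior point $y^* \sim 1/s$ at which $1+y^* \sim s^{-1}$ and $s(cy^*+m^2)+1 = O_{m,\alpha}(1)$, giving the supremum $\lesssim_{m,\alpha} s^{-\alpha}$; for $s \geq 1$ one has both $(sm^2+1)^{-2}$ and the $y\to\infty$ limit $(sc)^{-2}$ bounded by $s^{-2} \leq s^{-\alpha}$, and a monotonicity argument shows this suffices on the whole range of $y$. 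The case $\alpha = 2$ is included since $(1+y)^2/(s(cy+m^2)+1)^2$ is monotone with limit $\lesssim s^{-2} = s^{-\alpha}$.

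Plugging the uniform bound \eqref{eq:plan-fourier-multiplier} back into the Cauchy--Schwarz step yields
\begin{equs}
\| I_{s,t}(u) \|_{H^\alpha}^2
\lesssim_{m,\alpha}
\frac{t-s}{s^\alpha} \sum_{k \in \Omega_\epsilon^*} \int_s^t |\hat u_\tau(k)|^2\, d\tau
=
\frac{t-s}{s^\alpha} \int_s^t \|u_\tau\|_{L^2}^2\, d\tau,
\end{equs}
by Parseval. The only nontrivial ingredient is the elementary supremum bound above; the rest is a direct transcription of the Fourier manipulations from Lemmas \ref{lem:integrated-drift-large-scales-h2}--\ref{lem:integrated-drift-small-scales-h1}.
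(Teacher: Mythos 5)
Your proof is correct and essentially matches the paper's argument: both apply Cauchy--Schwarz in $\tau$ and reduce to the elementary bound $(1+y)^\alpha/(\sigma(cy+m^2)+1)^2\lesssim_m\sigma^{-\alpha}$. The only difference is the order of operations — you compute the $\tau$-integral exactly and then invoke the supremum bound at $\sigma=s$, while the paper bounds the integrand by $\tau^{-\alpha}$ for each $\tau$ and integrates; both yield the same factor $(t-s)/s^\alpha$.
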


\begin{proof}
The following calculations are similar to before. We include them here for completeness.
\begin{equs}
\| I_{s,t}(u) \|_{H^\alpha}^2 
&\leq 
\sum_{k\in \Omega_\epsilon^*} (1+ |k|^2)^\alpha  \int_s^t \frac{1}{(\tau(c|k|^2 +m^2) + 1)^2} d\tau \int_s^t |\hat u_\tau(k)|^2 d \tau 
\nnb
&\lesssim_{\alpha}
 \sum_{k\in \Omega_\epsilon^*} \int_s^t \frac{1}{\tau^\alpha} d\tau \int_s^t |\hat u_\tau(k)|^2 d \tau 
 =
  \int_s^t \frac{1}{\tau^\alpha} d\tau \sum_{k\in \Omega_\epsilon^*} \int_s^t |\hat u_\tau(k)|^2 d \tau 
 \nnb
& = 
\frac{1}{s^\alpha} (t-s) \int_s^t \|u_\tau\|_{L^2}^2 d\tau .
\end{equs}
\end{proof}

\subsection{Uniform $L^2$ estimate for minimisers} 
\label{ssec:l2-estimates}

As an application of the estimates established in the previous section we now show that if a drift minimises the functional in \eqref{eq:boue-dupuis} for $t=\infty$,
then the expectation of its $L^2$ norm is finite as stated in the following proposition. 

\begin{proposition}
\label{prop:minimiser-l2}
Assume that $\bar u$ minimises the functional 
\begin{equation}
F^{\infty,E} (u) =  \E\big [  v_0^{\epsilon,E}(Y_\infty^\epsilon+ I_\infty^\epsilon (u) ) + \frac{1}{2} \int_0^\infty \| u_t\|_{L^2}^2 dt   \big] 
\end{equation}
in $\mathbb{H}_a$. Then
\begin{equation}
\label{eq:minimiser-l2}
\sup_{\epsilon>0}\sup_{E>0} \E\Big[\frac{1}{2} \int_0^\infty \|\bar u_t\|_{L^2}^2 dt\Big] < \infty.
\end{equation}
\end{proposition}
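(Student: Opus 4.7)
The plan is a standard min--comparison argument from the $\cP(\phi)_2$ literature (see \cite{BGVariational2019}): upper bound $F^{\infty,E}(\bar u)$ using admissibility of the zero drift, and match this by a coercive lower bound that produces $\E[\int_0^\infty\|\bar u_\tau\|_{L^2}^2 d\tau]$. For the upper bound, by minimality of $\bar u$,
\begin{equs}
F^{\infty,E}(\bar u) \leq F^{\infty,E}(0) = \E\bigl[\chi_E\bigl(v_0^{\epsilon}(Y_\infty^\epsilon)\bigr)\bigr] \leq C\,\E\bigl[|v_0^{\epsilon}(Y_\infty^\epsilon)|\bigr],
\end{equs}
where we used $|\chi_E(x)| \leq C|x|$ (a consequence of $\chi_E(0) = 0$ and $\sup_{E>0}\|\chi_E'\|_\infty < \infty$). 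The remaining expectation is uniformly bounded in $\epsilon$ by standard Wick calculus for $\int_{\Omega_\epsilon}\wick{(Y_\infty^\epsilon)^k}dx$, entirely analogous to Lemma \ref{lem:wick-powers-besov-expectation}.

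The main step is a coercive lower bound on $v_0^{\epsilon}(Y_\infty^\epsilon + I)$ where $I \coloneqq I_\infty^\epsilon(\bar u)$. Using \eqref{eq:y-infty-wick-binomial} with $\varphi = I$ and summing over the monomials of $\cP$,
\begin{equs}
v_0^{\epsilon}(Y_\infty^\epsilon + I) = a_N \int_{\Omega_\epsilon} I^N\, dx + \sum_{n \leq N,\, 0 \leq k < n} c_{n,k} \int_{\Omega_\epsilon} \wick{(Y_\infty^\epsilon)^{n-k}}\, I^k \, dx .
\end{equs}
The leading term is non-negative since $N$ is even and $a_N > 0$. Each cross term is estimated using the duality form of Proposition \ref{prop:besov-products}, iterated multiplication to obtain $\|I^k\|_{\besov{p}{p}{\alpha}} \lesssim \|I\|_{\besov{kp}{kp}{\alpha}}^k$, Besov interpolation (Proposition \ref{prop:besov-interpolation}) between $L^N$ and $H^1$, and Sobolev embedding in $d = 2$. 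A careful application of Young's inequality then absorbs all $\|I\|_{L^N}$-contributions into a small fraction of the leading $\int I^N\,dx$ and leaves, for arbitrary $\eta > 0$,
\begin{equs}
v_0^{\epsilon}(Y_\infty^\epsilon + I) \geq -\mathcal{R}_\eta(Y_\infty^\epsilon) - \eta \|I\|_{H^1}^2,
\end{equs}
where $\mathcal{R}_\eta$ is a polynomial in $\cC^{-\kappa}$-norms of Wick powers of $Y_\infty^\epsilon$ satisfying $\sup_\epsilon \E[\mathcal{R}_\eta(Y_\infty^\epsilon)] < \infty$ by Lemma \ref{lem:wick-powers-besov-expectation}.

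To pass this bound through the cut-off, note that concavity of $\chi_E$ together with $\chi_E(x) = x$ on $(-\infty, E/2]$ yields $\chi_E(x) \geq x \wedge (E/2) \geq x \wedge 0$, and therefore $\chi_E(v_0^{\epsilon}(Y_\infty^\epsilon+I)) \geq -\mathcal{R}_\eta(Y_\infty^\epsilon) - \eta \|I\|_{H^1}^2$. Combining with Lemma \ref{lem:integrated-drift-large-scales-h1} at $\alpha = 1$, which gives $\|I\|_{H^1}^2 \lesssim_m \int_0^\infty \|\bar u_\tau\|_{L^2}^2 d\tau$, and inserting into the upper bound $F^{\infty,E}(\bar u) \leq C$ yields
\begin{equs}
\Bigl(\tfrac{1}{2} - C'\eta\Bigr) \E\Bigl[\int_0^\infty \|\bar u_\tau\|_{L^2}^2 d\tau\Bigr] \leq 2C
\end{equs}
uniformly in $(\epsilon, E)$, and choosing $\eta < 1/(4C')$ concludes.

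\textbf{Main obstacle.} The lower bound on $v_0^{\epsilon}$ is the technical heart of the argument: while the strategy of absorbing cross-terms into the $L^N$-leading term is classical, keeping constants uniform in the lattice spacing $\epsilon$ requires the full discrete harmonic-analysis framework of Section \ref{sec:discrete-calculus}, and the Young exponents must be tracked carefully so that every cross-term is of strictly sub-$L^N$ order relative to $a_N \int_{\Omega_\epsilon} I^N \,dx$. By contrast, uniformity in the cut-off parameter $E$ comes cheaply from the monotone observation $\chi_E \geq (\cdot) \wedge 0$.
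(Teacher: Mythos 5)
Your proposal is correct and follows essentially the same strategy as the paper's proof: upper bound $F^{\infty,E}(\bar u)$ by testing with the zero drift, lower bound $v_0^{\epsilon}(Y_\infty^\epsilon + I)$ by expanding with the Wick binomial formula, estimating cross terms via Besov duality/multiplication/interpolation and Young's inequality to absorb them into the positive leading term and a small multiple of $\int\|u\|_{L^2}^2$, controlling $\|I\|_{H^1}^2$ via Lemma \ref{lem:integrated-drift-large-scales-h1}, and passing through the cut-off using monotonicity together with $\chi_E(x) = x$ for $x \leq 0$. The only substantive differences are cosmetic: the paper's upper bound is the sharper $\leq 0$ via Jensen's inequality and $\E[v_0^\epsilon(Y_\infty^\epsilon)]=0$ rather than your $|\chi_E(x)|\leq C|x|$, and your displayed decomposition of $v_0^\epsilon(Y_\infty^\epsilon + I)$ forgets the pure $a_n I^n$ terms for $1\leq n < N$ (handled in the paper by a pointwise Young inequality, \eqref{eq:bg-poly-4}); both are easily repaired and do not affect the argument.
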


Before we prove Proposition \ref{prop:minimiser-l2}, we establish the following auxiliary result to fix the numerology in the analysis when using interpolation inequalities,
which will allow to treat all polynomials $\cP$ as in \eqref{eq:wick-polynomials} simultaneously.

\begin{lemma}
\label{lem:numerology}
Let $l,N \in \N $ such that $1\leq l \leq N-1$. There exist $\theta_{l,N}\in (0,1)$ and $\alpha_{l,N},\beta_{l,N},\gamma_{l,N} \in (0,\infty)$ satisfying
\begin{equs}
\label{eq:numerology-definitions}
\alpha_{l,N} = \frac{2}{\theta_{l,N} l},
\qquad 
\beta_{l,N}=\frac{N}{(1-\theta_{l,N})l},
\qquad 
\frac{1}{\gamma_{l,N}}=1-\frac{1}{\alpha_{l,N}}-\frac{1}{\beta_{l,N}}
\end{equs}
such that the following inequalities hold:
\begin{equs}
\label{eq:conditions-interpolation}
  \frac{1}{\alpha_{l,N}}+\frac{1}{\beta_{l,N}}<1
  \qquad
  \text{and}
  \qquad
  \gamma_{l,N} \geq \alpha_{l,N}.
\end{equs}
\end{lemma}
 
 \begin{proof}
The first inequality can be rewritten as
\begin{equs}
\frac{\theta_{l,N} l}{2}+\frac{(1-\theta_{l,N})l}{N} < 1
\quad \iff \quad
\theta_{l,N} < \frac{2(N-1)}{l(N-2)}.
\end{equs}
To reformulate the second condition, we compute
\begin{equs}
\frac{1}{\gamma_{l,N}} = 1 - \frac{1}{\alpha_{l,N}} - \frac{1}{\beta_{l,N}}
=
1 - \frac{\theta_{l,N}l}2 - \frac{(1-\theta_{l,N} )l}{N}
 =
\frac{2N - \theta_{l,N} Nl - 2(1-\theta_{l,N})l}{2N}.
\end{equs}        
Thus, to satisfy the second inequality in \eqref{eq:conditions-interpolation}, we need to choose $\theta_{l,N}$ such that
\begin{equs}
\frac{N}{2N - \theta_{l,N} Nl - 2(1-\theta_{l,N})l}
\geq
\frac{1}{\theta_{l,N}l},
\end{equs}
which is equivalent to 
\begin{equs}
2 N \theta_{l,N}l \geq 2N -2l + 2l \theta_{l,N}
\quad \iff \quad 
\theta_{l,N} \geq \frac{N-l}{(N+1)l}.
\end{equs}
Hence, both conditions in \eqref{eq:conditions-interpolation} are satisfied, if we choose $\theta_{l,N} \in (0,1)$ satisfying
\begin{equs}
\frac{2(N-1)}{ l(N-2)} > \theta_{l,N} \geq \frac{N-l}{ l(N+1)},
\end{equs}
which is possible for all specified values of $l,N$.
\end{proof}

\begin{lemma}
\label{lem:duality-power-integrated-drift}
For every $l,N \in \N$ such that $1 \leq l \leq N-1$ and for every $\kappa>0$, there exists $\tilde\kappa_{l,N} >0$ such that the following estimate holds.
For any $\delta > 0$ sufficiently small there exists $C>0$ such that, for $f\in \cC^{- \tilde\kappa_{l,N}} $ and $u\in L^{2}(\R_0^+ \times \Omega_{\epsilon})$, we have, for $t \in (0,\infty]$,
\begin{equs}
\label{eq:duality-power-integrated-drift}
\Big|\int_{\Omega_\epsilon} f I_t^l(u)dx \Big| \leq C (t\wedge 1)^{1-\kappa}\|f\|^{\gamma_{l,N}}_{\cC^{-\tilde \kappa_{l,N}} } + \delta a_N \|I_{t}(u)\|^{N}_{L^N}+ \delta \int_{0}^{t} \| u_s \|^{2}_{L^{2}} ds.
\end{equs}
\end{lemma}

\begin{proof}
We only prove the statement for $t \in (0,1]$ - the case $t=\infty$ follows similarly.
Let $\theta_{l,N}$ and $\alpha_{l,N},\beta_{l,N}, \gamma_{l,N}$ be as in Lemma \ref{lem:numerology}.
Multiplying the first inequality in \eqref{eq:conditions-interpolation} by $1/l$ we have
\begin{equation}
\frac{1}{\tilde{l}} \equiv \frac{\theta_{l,N}}{2}+\frac{1-\theta_{l,N}}{N}<\frac{1}{l}.
\end{equation}
Now, the standard\footnote{Note that this embedding is \textit{not} true if $s' = s$, since the condition $q' \geq q$ must be replaced by $q' \leq q$. However, in allowing for the strict inequality in $s$, we are allowed to trade for any $q$.}
Besov embedding $\besov{p'}{q'}{s'} \hookrightarrow \besov{p}{q}{s}$ if $s' > s, p' \geq p$, and $q' \geq q$, together with interpolation \eqref{eq:besov-interpolation} and Lemma \ref{lem:integrated-drift-small-scales-h2}, implies that
\begin{equs}
\|I_t(u)\|^{l}_{\besov{l}{l}{\tilde \kappa}} 
&\lesssim 
\|I_{t}(u)\|^{l}_{\besov{\tilde l }{\tilde l }{\tilde \kappa}}
\lesssim
\|I_{t}(u)\|^{l}_{\besov{\tilde l}{\infty}{2\tilde \kappa}}
\lesssim
\|I_t(u)\|_{\besov{N}{\infty}{0}}^{(1-\theta_{l,N}) l }\|I_t(u)\|_{\besov{2}{\infty}{2\tilde \kappa/\theta_{l,N}}}^{\theta_{l,N}l}
\\
&\lesssim
\|I_{t}(u)\|^{(1-\theta_{l,N})l}_{L^{N}} \|I_{t}(u)\|^{\theta_{l,N} l}_{H^{2\tilde \kappa/\theta_{l,N}}}
\\
&\lesssim 
t^{(1-\kappa)\theta_{l,N}l/2} \| I_{t}(u) \|_{L^{N}}^{(1-\theta_{l,N})l}\big( \int_0^t \|u_s \|_L^2 ds\big)^{\theta_{l,N}l/2}
\end{equs}
 where, for a given $\kappa$, we chose $\tilde \kappa_{l,N} \equiv \tilde \kappa = 2\theta_{l,N} \kappa$.
    
 Then, by duality \eqref{eq:besov-duality}, the iterated multiplication inequality \eqref{eq:besov-multiplication-inequality}
 and the definitions of $\alpha_{l,N}, \beta_{l,N}$ in \eqref{eq:numerology-definitions}, there exists $C>0$ such that
\begin{equs}
\Big| \int_{\Omega_\epsilon} f I_t^l(u) dx \Big|
&\leq C \| f \|_{\cC^{\tilde \kappa}} \| I_t^l(u) \|_{\besov{1}{1}{\tilde \kappa}} 
\leq  C \| f \|_{\cC^{\tilde \kappa}} \| I_t(u) \|_{\besov{l}{l}{\tilde \kappa}}^l \\
&\leq 
\bar C t^{(1-\kappa)/\alpha_{l,N}}\|f\|_{\cC^{-\tilde\kappa}} \|I_{t}(u)\|_{L^N}^{N/{\beta_{l,N}}} \big( \int_0^t \|u_s\| ds\big)^{2/\alpha_{l,N}}.
\end{equs}
Now, applying Young's inequality with $\alpha_{l,N},\beta_{l,N},\gamma_{l,N}$ and using the fact that $\gamma_{l,N}/ \alpha_{l,N}>1$, the estimate in \eqref{eq:duality-power-integrated-drift} follows.
\end{proof}

\begin{proof}[Proof of Proposition \ref{prop:minimiser-l2}]
Throughout this proof $\epsilon > 0$ is fixed and therefore dropped from the notation. The goal is to prove that for any $\delta > 0$ sufficiently small, there exists $\bar C > 0$ such that, for all $u \in \HH$,
\begin{equs}
\label{eq:bg-poly-7}
F^{\infty,E}(u)
\geq
- \bar C + \frac{1-2\delta}2 \E \Big[ \int_0^\infty \| u_s \|_{L^2}^2 \Big].
\end{equs}
This allows us to compare the cost function evaluated at a minimiser $\minimiser$ against the cost function evaluated at a competitor drift.
For our purposes it suffices to choose this competitor as the zero drift.
Indeed, since $\minimiser$ is a minimiser and since $\chi_E$ is concave and satisfies $\chi_E(0)=0$, we have
\begin{equs}
F^{\infty,E}(\minimiser)
\leq
F^{\infty,E}(0)	
= 
\E [v_0^{\epsilon,E}(Y_\infty)]
\leq
\chi_E \big( \E [ v_0(Y_\infty^\epsilon) ] \big)
\leq
0.
\end{equs}
Hence, by \eqref{eq:bg-poly-7},
\begin{equs}
\E \Big[ \int_0^\infty \| \bar u_s \|_{L^2}^2 ds \Big]
\leq
\frac{2 \bar C}{1-2\delta}.
\end{equs}
Taking the supremum over $E >0$ and $\epsilon > 0$ establishes \eqref{eq:minimiser-l2}.

It remains to establish \eqref{eq:bg-poly-7}. Fix $u \in \HH_a$.
For $\cP$ as in \eqref{eq:wick-polynomials}, by repeated use of the triangle inequality, we have that
\begin{equs} \label{eq:bg-poly-0}
\begin{split}
\Big|\int_{\Omega_\epsilon} \wick{\cP(Y_\infty + I_{\infty}(u))} dx\Big|
&\geq
-\Big|\int_{\Omega_\epsilon} \wick{\cP(Y_\infty + I_{\infty}(u))} dx
- \int_{\Omega_\epsilon} \wick{\cP(Y_\infty)} dx - \int_{\Omega_\epsilon} \cP(I_{\infty}(u)) dx \Big|
\\
&\qquad
- \Big|  \int_{\Omega_\epsilon} \wick{\cP(Y_\infty)} dx \Big| - \Big| \int_{\Omega_\epsilon} [\cP(I_{\infty}(u)) - a_N I_\infty^N(u)] dx	\Big|
\\
&\qquad 
 + a_N \|I_\infty(u)\|_{L^N}^N.
 \end{split}
\end{equs}
First, observe that there exists $c_1 > 0$, depending on the coefficients of $\cP$, such that
\begin{equs} \label{eq:bg-poly-3.5}
\Big|  \int_{\Omega_\epsilon} \wick{\cP(Y_\infty)} dx \Big| 
\leq
c_1 \sum_{k=1}^N \| \wick{Y_\infty^k}   \|_{\cC^{-\kappa}}.
\end{equs}
Second, observe that, for any $\delta > 0$, there exists $c_2=c_2(\delta) > 0$,  also depending on the coefficients of $\cP$, such that
\begin{equs}
\label{eq:bg-poly-4}
\Big|\int_{\Omega_\epsilon} \big[ \cP \big(I_{\infty}(u)\big ) - a_N I_{\infty}^N(u)\big] dx \Big|
\leq
c_2
+ \delta \int_{\Omega_\epsilon } a_N I_{\infty}^N(u) dx.
\end{equs}

Let us estimate the first term on the right-hand side of \eqref{eq:bg-poly-0}. By the triangle inequality and binomial theorem for Wick powers \eqref{eq:y-infty-wick-binomial},
\begin{equs}
\label{eq:bg-poly-1}
\begin{split}
\Big|\int_{\Omega_\epsilon} \wick{\cP(Y_\infty + I_{\infty}(u))} dx
&
- \int_{\Omega_\epsilon} \wick{\cP(Y_\infty)} dx - \int_{\Omega_\epsilon} \cP(I_{\infty}(u)) dx \Big|
\\
&\leq
 \sum_{k = 1}^{N} |a_{k}| \Big| \int_{\Omega_\epsilon}\big[ \wick{\big(Y_\infty + I_{\infty}(u) \big)^k}  - \wick{Y_\infty^k} - I_{\infty}^k(u)  \big] dx  \Big| 
\\
&\leq
\sum_{k = 1}^{N} \sum_{l = 1}^{k-1} |a_{k}| {k \choose l} \Big| \int_{\Omega_\epsilon} \wick{Y_\infty^{k-l}} I_{\infty}^{l} (u) dx \Big|.
\end{split}
\end{equs}

An application of Lemma \ref{lem:duality-power-integrated-drift} and setting $\tilde  \kappa= \min_{l\leq N} \tilde \kappa_{l,N}$ and $\tilde \kappa_{l,N}$ as in \eqref{eq:duality-power-integrated-drift} now yields
\begin{equation}
\Big| \int_{\Omega_{\epsilon}}\wick{Y^{k-l}_{\infty}} I_{\infty}^{l} dx \Big|
\leq C_{3} \|\wick{Y^{k-l}_{\infty} }\|_{\cC^{-\tilde \kappa}}^{\gamma_{l,N}}+\delta a_N \|I_{t}(u)\|^{N}_{L^{N}} +\delta \int_{0}^{\infty}\|u_{s}\|^{2}_{L^{2}}ds
\end{equation}
Set $\Gamma=\max_{l \leq N}\gamma_{l,N}$ and define
$Q =  \sum_{k=1}^N \sum_{l=1}^{k} \big(\| \wick{Y_\infty^l} \|_{\cC^{-\tilde \kappa}} + 1 \big)^\Gamma + \sum_{k=1}^N \| \wick{Y_\infty^k}   \|_{\cC^{-\tilde \kappa}}$.
Inserting this estimate into \eqref{eq:bg-poly-1} and combining it with \eqref{eq:bg-poly-3.5} and \eqref{eq:bg-poly-4}, we thus obtain the estimate:
there exists $c> 0$ depending on the mass $m$ only such that there exists $c_4,c_5 > 0$ such that
\begin{equs}
\label{eq:bg-poly-9}
\begin{split}
\int_{\Omega_\epsilon} \wick{\cP(Y_\infty + I_{\infty}(u))} dx
&\geq
-c_4 \big({ \sum_{k=1}^N }\sum_{l=1}^{k-1} (\| \wick{ Y_\infty^l } \|^{\gamma_{l,N}}_{\cC^{-\tilde \kappa}} + 1)  - \sum_{k=1}^N \| \wick{Y_\infty^k} \|_{\cC^{-\tilde \kappa}} \big)
\\
&\qquad 
+ a_N(1 - \delta) \|I_{\infty}(u)\|^{N}_{L^{N}}-\tilde \delta \|I_\infty(u)\|_{H^1}^2
 \\
&\geq -c_5 Q -\delta c \int_{0}^\infty \|u_s\|^{2}_{L^{2}} ds,
\end{split}
\end{equs}
where the last inequality follows by Lemma \ref{lem:integrated-drift-large-scales-h1}. 

Thus, by the monotonicity of the cut-off, and the fact that $ \chi_E(x) = x$ for $x \leq 0$, \eqref{eq:bg-poly-9} yields
\begin{equs} 
\label{eq:bg-poly-5}
\begin{split}
  \chi_{E}\Big( &\int_{\Omega_{\epsilon}}\wick{\cP\big(Y_{\infty}+I_{\infty}(u)\big)} dx \Big)
 &\geq - c_5 Q -\delta c \int_{0}^\infty \|u_s\|^{2}_{L^{2}} ds.
  \end{split}
\end{equs}
Hence, by Lemma \ref{lem:wick-varphi-estimate}, and the a priori estimate on the drift Lemma \ref{lem:integrated-drift-large-scales-h1}, we have that there exists $\bar C > 0$ such that,
\begin{equs} 
\label{eq:bg-poly-6}
\begin{split}
F^{\infty,E}(u)
&\geq
\E \Big[ -C Q + \frac{1-2\delta c}{2} \int_0^\infty \| u_s \|_{L^2}^2 ds  \Big]
\geq
- \bar C + \frac{1-2\delta c }2 \E \Big[ \int_0^\infty \| u_s \|_{L^2}^2 \Big],
\end{split}
\end{equs}
which, after a redefinition of $\delta$, establishes  \eqref{eq:bg-poly-7}.
\end{proof}

\begin{remark} \label{rem: 0 competitor}
It may seem surprising that the naive choice of comparing the minimiser $\overline{u}$ with the $0$ drift yields useful information, as opposed to a more carefully chosen competitor.
This is a manifestation of the mild ultraviolet divergences in dimension $2$ for these field theories.
Indeed, a trivial lower bound on the partition functions uniform in the cutoff can be obtained via Jensen's inequality and using that Wick polynomials are martingales - note that Jensen's inequality coincides \textit{exactly} with choosing the $0$ drift.
To extend these techniques to more singular field theories, such as the sine-Gordon model beyond a certain threshold, a more informed choice of competitor would need to be made. 
\end{remark}

\begin{remark}
\label{rem:lp-integrability}
The above argument also establishes uniform $L^{p}$ bounds for the density of the $\cP(\phi)_{2}$ measure with respect to the Gaussian.
Indeed, observe that we have proved that $F^{\infty,E}$ is bounded from below uniformly in $E,\epsilon$.
Recall that by \eqref{eq:boue-dupuis} we have
\begin{equs}
\inf_{u\in \HH_{a}} F^{\infty,E} (u)=
    -\log \E[\exp(-v_{0}^{\epsilon,E}(Y^{\epsilon}_{\infty}))].
\end{equs}
  So, a uniform lower bound on $F^{\infty,E}$ establishes a uniform bound on the expectation of the density of $\nu^{\cP_\epsilon,E}$. 
  The argument in the proof of Proposition \ref{prop:minimiser-l2} is also valid when $\cP$ is replaced by $p\cP$,
since Wick renormalisations are linear in the coefficients, which implies a uniform $L^{p}$ bound.
  \end{remark}

\subsection{A fractional moment estimate for small scales}

We now turn to a more refined estimate on the conditional second moment of minimisers restricted to finite time-horizons. 
Here and henceforth, we denote
\begin{equation}
F^{t,E}(u,\varphi) =  \E\Big [  v_0^{\epsilon,E}(Y_\infty^\epsilon +  I_t^\epsilon (u)+\varphi ) + \frac{1}{2} \int_0^t \| u_s\|_{L^2}^2 ds  \bigm | \cF^t \Big], \qquad \cF^t = \sigma( W_s\colon s\geq t  ) 
\end{equation}
for $t\in [0,\infty]$ and $E\in (0,\infty]$ with the convention $F^{t,E}(u,0) = F^{t,E}(u)$.

\begin{proposition}
\label{prop:cond-exp-small-scales-min}
Assume that $\minimiser$ minimises the functional $F^{t,E}(u,\varphi)$
in $\mathbb{H}_a[0,t]$ and that $\varphi\in X_\epsilon$ is $\cF^t$-measurable.
Then, as $t\to 0$ and for $\kappa$ sufficiently small,
\begin{equation}
\sup_{\epsilon>0}\sup_{E>0}\E \Big [ \int_0^t \| \minimiser_s \|_{L^2}^2 ds \bigm | \cF^t \Big ] \lesssim t^{1 -\kappa} ( 1 + \cW_t^\epsilon + \| \varphi \|_{H^1}^2 )^{L},
\end{equation}
where $\cW_t^\epsilon \geq 0$ a.s.\ for all $t\geq 0$ and  $\sup_{\epsilon>0}\sup_{t \geq 0} \E[\cW_t^\epsilon] <\infty$, and $L>0$ only depends on the maximal degree of $\cP$ and the interpolation parameters chosen in Lemma \ref{lem:numerology}.
\end{proposition}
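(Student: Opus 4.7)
The strategy is to test the minimiser $\bar u$ against the zero drift in the conditional variational formula, expand the resulting Wick polynomial difference binomially, and harvest a small-scale gain from Lemma \ref{lem:integrated-drift-small-scales-h1} using the fact that $I_t^\epsilon(\bar u)$ is only integrated over $[0,t]$. Writing $J=\int_0^t\|\bar u_s\|_{L^2}^2\,ds$, comparing $\bar u$ with $u\equiv 0$ in $F^{t,E}(\cdot,\varphi)$ yields
\begin{equation*}
\tfrac12\E[J\mid\cF^t] \;\leq\; \E\bigl[v_0^{\epsilon,E}(Y_\infty^\epsilon+\varphi) - v_0^{\epsilon,E}(Y_\infty^\epsilon+I_t^\epsilon(\bar u)+\varphi)\,\bigm|\,\cF^t\bigr].
\end{equation*}

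Since $\sup_E\|\chi_E'\|_\infty<\infty$, the right-hand side is dominated in absolute value by the corresponding difference of un-cut Wick polynomials. Applying the binomial identity \eqref{eq:y-infty-wick-binomial} produces
\begin{equation*}
\wick{(Y_\infty^\epsilon+\varphi+I_t^\epsilon(\bar u))^k}_\epsilon - \wick{(Y_\infty^\epsilon+\varphi)^k}_\epsilon = \sum_{j=0}^{k-1}\sum_{l=1}^{k-j}\binom{k}{j}\binom{k-j}{l}\wick{(Y_\infty^\epsilon)^j}_\epsilon\,\varphi^{k-j-l}\,I_t^\epsilon(\bar u)^l,
\end{equation*}
so every summand carries at least one power of $I_t^\epsilon(\bar u)$. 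Each integral against $dx$ is then controlled by Besov duality, the multiplication inequality (Proposition \ref{prop:besov-products}), and Besov embedding, giving a bound of the form
\begin{equation*}
\Big|\int_{\Omega_\epsilon}\wick{(Y_\infty^\epsilon)^j}_\epsilon \varphi^{k-j-l} I_t^\epsilon(\bar u)^l\,dx\Big| \;\lesssim\; \|\wick{(Y_\infty^\epsilon)^j}\|_{\cC^{-\kappa}}\,\|\varphi\|_{H^1}^{k-j-l}\,\|I_t^\epsilon(\bar u)\|_{H^{\kappa'}}^l
\end{equation*}
for any $\kappa'>\kappa$ sufficiently small that the iterated multiplication inequality closes. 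Lemma \ref{lem:integrated-drift-small-scales-h1} applied with $\alpha=\kappa'$ now supplies the small-scale gain $\|I_t^\epsilon(\bar u)\|_{H^{\kappa'}}^l\lesssim t^{l(1-\kappa')/2}J^{l/2}$, and assembling everything produces
\begin{equation*}
\E[J\mid\cF^t] \;\lesssim\; \sum_{k,j,l\geq 1} t^{l(1-\kappa')/2}\,\|\varphi\|_{H^1}^{k-j-l}\,\E\bigl[\|\wick{(Y_\infty^\epsilon)^j}\|_{\cC^{-\kappa}}\,J^{l/2}\,\bigm|\,\cF^t\bigr].
\end{equation*}

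For the $l=1$ contributions, conditional Cauchy-Schwarz yields $\E[\|\wick{(Y_\infty^\epsilon)^j}\|^2\mid\cF^t]^{1/2}\E[J\mid\cF^t]^{1/2}$, and Young's inequality absorbs $\E[J\mid\cF^t]^{1/2}$ into the left-hand side, leaving a contribution of the desired form; the random factor $\E[\|\wick{(Y_\infty^\epsilon)^j}\|_{\cC^{-\kappa}}^2\mid\cF^t]$ is built into $\cW_t^\epsilon$, whose expectation is finite uniformly in $\epsilon$ by Lemma \ref{lem:wick-powers-besov-expectation} and the conditional Jensen inequality.

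The main obstacle will be the $l\geq 2$ terms, because closing the estimate naively requires a priori control of the conditional moments $\E[J^n\mid\cF^t]$. The plan to obtain such a bound uniformly in $\epsilon,E$ is to rerun the reasoning of Proposition \ref{prop:minimiser-l2} and Remark \ref{rem:lp-integrability} conditionally on $\cF^t$, namely by applying the conditional Bou\'e-Dupuis formula to the modified exponent $n\,v_0^{\epsilon,E}$ together with the Wick bounds of Lemma \ref{lem:wick-varphi-estimate} and the interpolation numerology of Lemma \ref{lem:numerology}, to produce a bound $\E[J^n\mid\cF^t]\lesssim (1+\cW_t^\epsilon+\|\varphi\|_{H^1}^2)^{L_n}$. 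Equipped with this, conditional H\"older separates $\E[\|\wick{(Y_\infty^\epsilon)^j}\|_{\cC^{-\kappa}} J^{l/2}\mid\cF^t]$ into the product of a Wick moment (absorbed into $\cW_t^\epsilon$) and a conditional moment of $J$. Since $t^{l(1-\kappa')/2}\leq t^{1-\kappa'}$ for $l\geq 2$ and $t\in(0,1)$, every such contribution is of the required form after renaming $\kappa:=\kappa'$.
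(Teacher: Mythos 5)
Your starting point---testing the minimiser against the zero drift and expanding the Wick polynomial binomially---matches the paper's strategy, and your $l=1$ absorption via conditional Cauchy--Schwarz and Young's inequality is sound. However, the way you have organised the expansion and the Besov estimates creates a genuine gap at $l\geq 2$, and the workaround you propose for it does not follow from the tools cited.

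The paper does not expand everything binomially and then estimate each monomial in $H^{\kappa'}$. It first \emph{separates} the purely drift-dependent term $\int_{\Omega_\epsilon}\cP(I_t^\epsilon(u))\,dx$, defining $\cA$ to consist only of the cross terms $\sum_{k}\sum_{l=1}^{k-1}\binom{k}{l}a_k\int\wick{(Y_\infty+\varphi)^l}\,I_t^{k-l}(u)\,dx$. This separation matters because $a_N\int I_t^N(u)\,dx\geq 0$ supplies coercivity. Each cross term is then estimated by Besov duality and multiplication \emph{followed by interpolation} between $L^k$ and $H^{\kappa/\theta_{k,l}}$, with the exponents of Lemma~\ref{lem:numerology} chosen precisely so that a three-way Young's inequality sends (a) the $\|I_t(u)\|_{L^k}^{k}$ piece into $\delta\,a_N\|I_t(u)\|_{L^N}^N$, absorbed by the coercive term, (b) the $\|I_t(u)\|_{H^{\kappa/\theta_{k,l}}}^2$ piece into $\delta\int_0^t\|u_s\|_{L^2}^2\,ds$ via Lemma~\ref{lem:integrated-drift-small-scales-h1}, absorbed into the left-hand side, and (c) a deterministic factor $t^{1-\kappa/\theta_{k,l}}$ in front of the Wick-power norm. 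Thus the drift-dependent terms on the right-hand side all carry the small coefficient $\delta$, and the estimate closes at first conditional moment, independently of $l$.

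Your version skips both ingredients. You keep the $j=0,\,l=k$ term $a_k\int I_t^k(u)\,dx$ in the expansion and bound it by $t^{k(1-\kappa')/2}J^{k/2}$, discarding the favourable sign of $a_N\int I_t^N(u)\,dx$. More critically, your Besov estimate $\lesssim\|\wick{(Y_\infty^\epsilon)^j}\|_{\cC^{-\kappa}}\|\varphi\|_{H^1}^{k-j-l}\|I_t^\epsilon(\bar u)\|_{H^{\kappa'}}^l$ puts the whole $l$-th power into the $H^{\kappa'}$ norm, so after Lemma~\ref{lem:integrated-drift-small-scales-h1} the drift appears as $J^{l/2}$. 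For $l\geq2$ this power is at least one and cannot be reabsorbed by Young without introducing a higher conditional moment of $J$. The fallback you sketch---rerunning the Bou\'e--Dupuis argument for $n\,v_0^{\epsilon,E}$ to get $\E[J^n\mid\cF^t]$---does not deliver that: the variational formula for $n\,v_0^{\epsilon,E}$ controls a different functional and a \emph{different} minimiser, whereas $J=\int_0^t\|\bar u_s\|_{L^2}^2\,ds$ is built from the minimiser of $F^{t,E}(\cdot,\varphi)$; the minimiser property yields control only of the first conditional moment of $J$, and Proposition~\ref{prop:minimiser-l2} and Remark~\ref{rem:lp-integrability} likewise give only first-moment and density bounds. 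The fix is to restructure the expansion as in the paper: separate $\int\cP(I_t(u))$ and interpolate $\|I_t(u)\|_{\besov{k-l}{k-l}{\kappa}}$ between $L^k$ and $H^{\kappa/\theta_{k,l}}$ so that Young's inequality spreads all drift contributions into terms with coefficient $\delta$, eliminating the need for higher moments.
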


\begin{proof}
From the definition of $F^{t,E}(u,\varphi)$ we see that we need to bound the conditional expectation of $v_0^{\epsilon,E} (Y_\infty^\epsilon + I_t^\epsilon (u) + \varphi) $.
Similarly to the proof of Proposition \ref{prop:minimiser-l2}, we compare the cost function evaluated at the minimiser against the cost function evaluated at a competitor,
which is again the zero drift because of similar reasons outlined in Remark \ref{rem: 0 competitor}.
To simplify the notation, we will drop $\epsilon$ from here on.
Since $\minimiser$ is a minimiser for $F^{t,E}(u,\varphi)$, we have $ F^{t,E}(\minimiser, \varphi) - F^{t,E}(0,\varphi) \leq 0$, and therefore
\begin{equs}
\label{eq:A-est-0}
\begin{split}
         0
          &\geq 
          F^{t,E}(\minimiser, \varphi)-F^{t,E}(0, \varphi)
          \\
         &= \E \Big[\chi_{E}\big(v_0(Y_{\infty}+I_{t}(\minimiser) +\varphi)\big)  +\frac{1}{2}\int_0^t \| \minimiser_{s}\|_{L^2}^{2}ds -\chi_{E}\big(v_0(Y_{\infty}+\varphi)\big) \Bigm | \cF^t\Big]
         \\
         &= 
              \E \Big[\chi_{E}\Big(v_0\big(Y_{\infty}+\varphi\big)  +\cA(Y_\infty,\minimiser,\varphi) + a_N \int_{\Omega_\epsilon} ( I_{t}(\minimiser)^N dx \Big)
				\\ & \qquad \qquad \qquad \qquad   \qquad + \frac{1}{2}\int_{0}^{t}\| \bar u_{s}\|^{2}_{L^2}ds
   		-\chi_{E}\Big(v_0\big(Y_{\infty}+\varphi\big)\Big) \Bigm | \cF^t\Big]
\end{split}
\end{equs}
where, for a generic drift $u \in \HH_a[0,t]$, we set
\begin{equs}
\label{eq:def-A}
\begin{split}
\cA(Y_\infty,u,\varphi)
&=
\int_{\Omega_\epsilon }\wick{\cP\big(Y_\infty + \varphi + I_{t}(u)\big)} dx - \int_{\Omega_\epsilon} \wick{\cP(Y_\infty + \varphi)}  \,  dx - a_N \int_{\Omega_\epsilon} I_t(u)^N dx
\\
&=
\sum_{k=1}^N \sum_{l=1}^{k-1} a_k {k \choose l} \int_{\Omega_\epsilon} \wick{(Y_\infty + \varphi)^{k-l}} I_{t}^{l}(u) dx + \sum_{l=1}^{N-1} a_k \int_{\Omega_{\epsilon}} I_t(u)^l dx.
\end{split}
\end{equs}
Note that, above, we tacitly use that the polynomial $\cP$ has no constant term.

The heart of the proof is to show the following estimate on $\cA$: there exists $\kappa > 0$ sufficiently small and $L>0$ such that, for any $\delta > 0$, there exists $C>0$ such that, for any $u \in \HH_a[0,t]$,

\begin{equs} \label{eq:A-est-goal}
  \begin{split}
    \cA(Y_\infty,u,\phi) &+ \delta a_N \int_{\Omega_\epsilon} I_t(u)^{N}  dx
    \\
    &\geq 
    -C t^{1-\kappa}  \big(  1+  \sum_{k=1}^N\| \wick{Y_{\infty}^j}\|_{\cC^{-\kappa}}^{2L} + \|\varphi\|_{H^{1}}^{2L}\big) -\delta \int_{0}^{t}\|u_{s}\|_{L^{2}}^{2}ds.
\end{split}
\end{equs}
Then, inserting \eqref{eq:A-est-goal} into \eqref{eq:A-est-0} and using that $\chi_E(a-b) \geq \chi_E(a) -b$ for $a \in \R$ and $b\geq 0$, we obtain
\begin{equs}
\label{eq:A-est-4}
\begin{split}
         0
          &\geq 
          \E \Big[\chi_{E}\Big(v_0\big(Y_{\infty}+\varphi\big) +\cA(Y_\infty,\minimiser,\varphi) + a_N \int_{\Omega_\epsilon}\ I_{t}(\minimiser)^N  dx \Big)
				\\ & \qquad \qquad \qquad \qquad   \qquad + \frac{1}{2}\int_{0}^{t}\| \bar u_{s}\|^{2}_{L^2}ds
   		-\chi_{E}\Big(v_0\big(Y_{\infty}+\varphi\big)\Big) \Bigm | \cF^t\Big]
        \\
         &\geq 
        \E\Big [-Ct^{1-\kappa}\Big(1 + \sum_{k=1}^N \| \wick{Y_{\infty}^k}\|_{\cC^{- \kappa}}^{2L} +\|\varphi\|_{H^{1}}^{2L} \Big) + \big(\frac{1}{2}-\delta\big)\int_{0}^{t}\|\minimiser_{s}\|^{2}_{L^2} ds \Bigm | \cF^t \Big]
        \\
        &\geq 
        -Ct^{1- \kappa} \Big( 1 + \cW_t + \|\varphi\|^2_{H^1} \Big)^{L} + \E \Big[ \big(\frac{1}{2}-\delta\big) \int_{0}^{t} \|\minimiser_{s}\|^{2}_{L^2} ds \Bigm | \cF^t\Big],
\end{split}
\end{equs}
where 
\begin{equs}
  \cW_t
 =
  \E \Big[ \sum_{k=1}^N \| \wick{Y_{\infty}^k} \|_{\cC^{-\bar \kappa}}^{2L}  \bigm | \cF^t \Big]^\frac 1L.
\end{equs}        
Finally, observe that by Jensen's inequality, the tower property of conditional expectation, and Lemma \ref{lem:wick-varphi-estimate}, we have
\begin{equs}
\begin{split}
\sup_{\epsilon > 0} \sup_{t \geq 0} \E[\cW_t]
&=
\sup_{\epsilon > 0} \sup_{t \geq 0} \E \Big[ \Big(\E \Big[ \sum_{k=1}^N \| \wick{Y_{\infty}^k} \|_{\cC^{-\bar \kappa}}^{2L}  \bigm | \cF^t \big] \Big)^\frac 1L \Big]
\\
&\leq
\sup_{\epsilon > 0} \sup_{t \geq 0} \Big( \sum_{k=1}^N \E \big[ \| \wick{Y_{\infty}^k}\|_{\cC^{-\bar \kappa}}^{2L} \big] \Big)^\frac 1L
<
\infty.
\end{split}
\end{equs}
Thus, rearranging \eqref{eq:A-est-4} completes the proof conditional on \eqref{eq:A-est-goal}.

We now focus on \eqref{eq:A-est-goal}. 
Applying Lemma \ref{lem:duality-power-integrated-drift} with  $f=\wick{(Y_{\infty}+\varphi)^{k-l}}$ and $f=1$, we know that there exists $\kappa, \tilde \kappa > 0$ sufficiently small and $L>0$ such that, for any $\delta > 0$, there exists $C>0$ such that, for any $u \in \HH_a[0,t]$,
\begin{equs} 
\label{eq:A-est-2}
\Big| \int_{\Omega_\epsilon} \wick{(Y_\infty &+ \varphi)^{k-l}} I_{t}^{l} (u)dx \Big|
\\
&\leq
C t^{1-\kappa}   \| \wick{(Y_\infty + \varphi)^{k-l}} \|_{\cC^{-\tilde\kappa}}^{\gamma_{l,N}} + \delta \Big(a_N \int_{\Omega_\epsilon} I_t(u)^{N} dx  + \frac{1}{2} \int_0^t \| u_s \|_{L^2}^2 ds \Big)
\end{equs}
and
\begin{equs}
  \label{bound:I}
  \int I_t(u)^l dx \leq C t^{1-\kappa}+ \delta a_N\|I_t(u)\|^N_{L^N} + \delta \int_0^t \|u_s\|_L^2 ds.
  \end{equs}

In addition, by Lemma \ref{lem:wick-varphi-estimate}, there exists $\bar \kappa>0$ sufficiently small, such that 
 \begin{equs} 
 \label{eq:A-est-3}
 \|\wick{(Y_{\infty}+\varphi)^{k-l}} \|_{\cC^{-\tilde\kappa}}
 \lesssim 
 1 + \sum^{k-l}_{j=1} \|\wick{Y_{\infty}^{k-l-j} }\|_{\cC^{-\bar \kappa}}^{ (k-l)/(k-l-j)}+ \|\varphi\|_{H^{1}}^{k-l}.
 \end{equs}
Therefore, inserting \eqref{eq:A-est-3} into \eqref{eq:A-est-2}, and summing this with \eqref{bound:I}, there exists $L>0$ such that
  \begin{equs}
  \begin{split}
    &\cA(Y_\infty,u,\phi) + \delta a_N \|I_{t} (u)\|_{L^{N}}^{N}
    \\
    &\geq
    -C t^{1-\kappa}  \big(  1+  \sum_{k=1}^N \sum_{l=1}^{k-1} \sum_{j=1}^{k-l} \big( 1 + \|\wick{Y_{\infty}^{k-l-j} }\|_{\cC^{-\bar \kappa}}^{ (k-l)/(k-l-j)}+ \|\varphi\|_{H^{1}}^{k-l} \big)^{\gamma_{l,N}}
    -\delta \int_{0}^{t}\|u_{s}\|_{L^{2}}^{2}ds
    \\
    & 
    \geq 
    -C t^{1-\kappa}  \big(  1+  \sum_{j=1}^N\| \wick{Y_{\infty}^j}\|_{\cC^{-\bar \kappa}}^{2L} + \|\varphi\|_{H^{1}}^{2L}\big) -\delta \int_{0}^{t}\|u_{s}\|_{L^{2}}^{2}ds
    \end{split}
  \end{equs}
  thereby establishing \eqref{eq:A-est-goal} up to a redefinition of $\kappa$.
\end{proof}

We now prove the main estimate of this section.
\begin{proposition}
\label{prop:minimiser-small-scales}
Let $\minimiserE_t=-q_t^\epsilon \nabla v_t^{\epsilon,E}(\Phi_t^{\Delta_\epsilon,E})$ and let $L$ be as in Proposition \ref{prop:minimiser-small-scales}.
Then for any $ \kappa >0$ sufficiently small, we have
\begin{equation}
\sup_{E>0}  \sup_{\epsilon>0} \sup_{t\leq 1} \E\Big[ \big( t^{ -1 + \kappa} \int_0^t \| \minimiserE_s \|_{L^2}^2 ds\big)^{1/L} \Big] < \infty.
\end{equation}
\end{proposition}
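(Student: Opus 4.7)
The approach is to combine the Polchinski--Bou\'e-Dupuis correspondence with the conditional fractional moment estimate of Proposition~\ref{prop:cond-exp-small-scales-min}, applied with the shift $\varphi = \Phi_t^{\Delta_\epsilon, E}$. The starting observation is that by the decomposition $Y_\infty^\epsilon = Y_t^\epsilon + \Phi_t^{\GFF_\epsilon}$ together with $\Phi_t^{\cP_\epsilon, E} = \Phi_t^{\Delta_\epsilon, E} + \Phi_t^{\GFF_\epsilon}$, the argument of $v_0^{\epsilon,E}$ inside $F^{t,E}(u, \Phi_t^{\Delta_\epsilon, E})$ reduces to $Y_t^\epsilon + \Phi_t^{\cP_\epsilon, E} + I_t^\epsilon(u)$, i.e.\ the argument appearing in the conditional Bou\'e-Dupuis formula of Proposition~\ref{prop:boue-dupuis-formula}. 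Proposition~\ref{prop:bd-polchinski-correspondence} therefore identifies the restriction of $u^E$ to $[0,t]$ as a minimiser of $F^{t,E}(\cdot, \Phi_t^{\Delta_\epsilon, E})$. Since $\Phi_t^{\Delta_\epsilon, E}$ is manifestly $\cF^t$-measurable, Proposition~\ref{prop:cond-exp-small-scales-min} applies and yields, for $t$ sufficiently small,
\begin{equation}
  \E\Big[\int_0^t \|u_s^E\|_{L^2}^2 \, ds \,\Big|\, \cF^t \Big]
  \lesssim
  t^{1-\kappa}\big(1 + \cW_t^\epsilon + \|\Phi_t^{\Delta_\epsilon,E}\|_{H^1}^2\big)^L.
\end{equation}

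Conditional Jensen's inequality applied to the concave map $x \mapsto x^{1/L}$ (for $L \geq 1$), followed by multiplication by $t^{(-1+\kappa)/L}$, makes the $t$-powers cancel exactly and gives
\begin{equation}
  \E\Big[\Big(t^{-1+\kappa}\int_0^t \|u_s^E\|_{L^2}^2 \, ds\Big)^{1/L} \,\Big|\, \cF^t \Big]
  \lesssim
  1 + \cW_t^\epsilon + \|\Phi_t^{\Delta_\epsilon, E}\|_{H^1}^2.
\end{equation}
After taking unconditional expectation, the claim reduces to bounding $\E[\cW_t^\epsilon]$ and $\E[\|\Phi_t^{\Delta_\epsilon,E}\|_{H^1}^2]$ uniformly in $\epsilon, E$ and $t \leq 1$. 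The former is already established in the last display of the proof of Proposition~\ref{prop:cond-exp-small-scales-min}. For the latter, I would use the identity $\Phi_t^{\Delta_\epsilon, E} = I_{t,\infty}^\epsilon(u^E)$ together with Lemma~\ref{lem:integrated-drift-large-scales-h1} at $\alpha=1$ to get
\begin{equation}
  \|\Phi_t^{\Delta_\epsilon, E}\|_{H^1}^2 \lesssim \int_0^\infty \|u_s^E\|_{L^2}^2 \, ds,
\end{equation}
and then invoke Proposition~\ref{prop:bd-polchinski-correspondence} at $t = \infty$ (where $\Phi_\infty^{\cP_\epsilon,E} = 0$) to identify $u^E$ as a global minimiser of $F^{\infty,E}$, so that Proposition~\ref{prop:minimiser-l2} supplies the desired uniform integrability. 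The regime where $t$ is bounded away from $0$ is handled directly by this global $L^2$ bound and Jensen's inequality, so the refined estimate is only needed for small $t$.

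The main obstacle is the preliminary identification in the first paragraph, namely recognising $F^{t,E}(\cdot, \Phi_t^{\Delta_\epsilon, E})$ as the variational functional whose minimiser is delivered by the Polchinski dynamics, by reconciling the $Y_\infty^\epsilon$-based formulation of $F^{t,E}$ with the $Y_t^\epsilon + \Phi_t^{\cP_\epsilon, E}$ form of the conditional Bou\'e-Dupuis formula. Once this is in place, the proof is a matter of bookkeeping: the fractional exponent $1/L$ in the statement is tailored precisely to compensate the $L$-th power on the right-hand side of Proposition~\ref{prop:cond-exp-small-scales-min} via Jensen, and the residual $H^1$-norm of the difference field is absorbed into the a priori $L^2$ integrability of the minimiser.
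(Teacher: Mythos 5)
Your proposal follows the same route as the paper's proof: identify $u^E$ restricted to $[0,t]$ as a minimiser of $F^{t,E}(\cdot, \Phi_t^{\Delta_\epsilon,E})$ via the correspondence of Proposition~\ref{prop:bd-polchinski-correspondence} and the identity $Y_t^\epsilon + \Phi_t^{\cP_\epsilon,E} = Y_\infty^\epsilon + \Phi_t^{\Delta_\epsilon,E}$, invoke Proposition~\ref{prop:cond-exp-small-scales-min} with $\varphi = \Phi_t^{\Delta_\epsilon,E}$, apply conditional Jensen to the concave $1/L$-power, and close the loop with the uniform bounds on $\E[\cW_t^\epsilon]$ and $\E[\|\Phi_t^{\Delta_\epsilon,E}\|_{H^1}^2]$ (the latter via Lemma~\ref{lem:integrated-drift-large-scales-h1} and Proposition~\ref{prop:minimiser-l2}). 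The remark about $t$ bounded away from zero is an extra but harmless observation, since the bound of Proposition~\ref{prop:cond-exp-small-scales-min} is in fact uniform over $t\leq 1$.
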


\begin{proof}
From Proposition \ref{prop:bd-polchinski-correspondence} we know that $u^E$ is a minimiser of the Bou\'e-Dupuis variational formula \eqref{eq:boue-dupuis}.
Since $\Phi_t^{\cP_\epsilon,E} = \Phi_t^{\Delta_\epsilon,E} + \Phi_t^{\GFF_\epsilon}$ and $Y_t^\epsilon+ \Phi_t^{\GFF_\epsilon} = Y_\infty^\epsilon$, we also have that $u^E$ minimises
\begin{equation}
F^{t,E}(u, I_{t,\infty}^\epsilon(u^E))
= \E \Big[ v_0^{\epsilon,E}\big(Y_\infty^\epsilon + I_t^\epsilon(u) + I_{t,\infty}^\epsilon(\minimiserE) \big) + \frac{1}{2}\int_0^t \|u\|_{L^2}^2 \bigm | \cF^t \Big]
\end{equation}
in $\HH_a[0,t]$, where $I_{t,\infty}^\epsilon(\minimiserE) = \Phi_t^{\Delta_\epsilon,E}$ is $\cF^t$-measurable. Hence, by Proposition \ref{prop:cond-exp-small-scales-min}  we have for any $\kappa$ sufficiently small
\begin{equation}
\E \Big[ \int_0^t \|\minimiserE_s \|_{L^2}^2 \bigm | \cF^t \Big] 
\lesssim
 t^{1-\kappa} \big(  1+   \cW_t^\epsilon + \|I_{t,\infty}^\epsilon(\minimiserE)\|_{H^1}^2 \big)^L.
\end{equation}
Thus, for every such $\kappa$
\begin{align}
\E\big[ \big(   t^{-1+\kappa } \int_0^t \| \minimiserE_s\|_{L^2}^2 ds \big)^{1/L}   \big] &= \E\Big[ \E\big[ \big(   t^{-1+\kappa} \int_0^t \| \minimiserE\|_{L^2}^2 ds \big)^{1/L} \bigm | \cF^t \big]   \Big] \nnb
&\leq \E \Big[ \E\big[ t^{-1+\kappa } \int_0^t \| \minimiserE_s \|_{L^2}^2 \bigm |\cF^t\big]^{1/L}\Big] \leq \E\Big[ 1+  \cW_t^\epsilon + \|I_{t,\infty}^\epsilon(\minimiserE) \|_{H^1}^2  \Big] \leq C
\end{align}
for some universal constant $C>0$. 
Taking the supremum over $t\leq 1$ and $\epsilon>0$, the claim follows.
\end{proof}

\section{Proof of the coupling to the GFF}

This section is devoted to the proof of the main results Theorem \ref{thm:coupling-pphi-to-gff-eps} and Corollary \ref{cor:pphi-coupling-continuum}.
Recall that we introduced the notion $\minimiserE_t = -q_t^\epsilon \nabla v_t^{\epsilon,E} (\Phi_t^{\cP_\epsilon,E})$.
By Proposition \ref{prop:bd-polchinski-correspondence} we know that this is a minimiser of the variational problem \eqref{eq:boue-dupuis}.
Hence, it  satisfies the estimate in Proposition \ref{prop:minimiser-l2} and also the estimates in Proposition \ref{prop:minimiser-small-scales}.

\subsection{Estimates on the difference field $\Phi^{\Delta_\epsilon,E}$}

First, we show how the bound for the minimiser $\minimiserE$ in Proposition \ref{prop:minimiser-l2}  implies bounds on the fractional moments of Sobolev norms of the difference field $\Phi_t^{\Delta_\epsilon,E} = \int_t^\infty q_\tau^\epsilon \minimiserE_\tau d\tau$ in \eqref{eq:SDE-polchinski-cut-off}.
The main result is the following proposition.

\begin{proposition}
\label{prop:Sobolev-norm-phi-delta}
Let $\alpha \in (0,2)$. Then
\begin{equation}
\label{eq:Sobolev-norm-phi-delta}
\sup_{E>0} \sup_{\epsilon>0} \sup_{t\geq 0}\E \big[ \| \Phi_0^{\Delta_\epsilon,E} \|^{2/L}_{H^{\alpha}} \big ] < \infty.
\end{equation}
\end{proposition}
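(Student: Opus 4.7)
Since $\Phi_t^{\Delta_\epsilon,E} = I_{t,\infty}^\epsilon(u^E)$ by Proposition~\ref{prop:bd-polchinski-correspondence}, the task reduces to controlling Sobolev norms of integrated drifts at the minimiser. For $\alpha \in (0,1]$, the result is immediate: Lemma~\ref{lem:integrated-drift-large-scales-h1} combined with Proposition~\ref{prop:minimiser-l2} gives a uniform full second-moment bound on $\|\Phi_t^{\Delta_\epsilon,E}\|_{H^\alpha}$ for all $t\geq 0$, hence (by Jensen) a fortiori the desired $2/L$-moment bound. For $\alpha \in (1,2)$ and $t\geq 1$, revisiting the proof of Lemma~\ref{lem:integrated-drift-large-scales-h2} shows that the relevant supremum $\sup_k (1+|k|^2)^{\alpha-1}/(1+t(c|k|^2+m^2))$ is $O(1)$ when $t$ is bounded below, so $\|I_{t,\infty}(u^E)\|_{H^\alpha}^2 \lesssim \int_0^\infty \|u^E_\tau\|_{L^2}^2\,d\tau$, again yielding a full second-moment estimate via Proposition~\ref{prop:minimiser-l2}. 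The delicate regime is $\alpha \in (1,2)$ with $t \in [0,1)$.

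In this regime I would pass to dyadic scales. Write
\begin{equation*}
I_{t,\infty}(u^E) = I_{1,\infty}(u^E) + \sum_{j} I_{s_j, s_{j+1}}(u^E),
\end{equation*}
where $\{s_j\}$ is a dyadic partition of $[t,1]$: $s_j = 2^j t$ for $j = 0, \dots, j_0 \sim \log_2(1/t)$ when $t>0$, and $\{[s_{j+1},s_j]\}_{j\geq 0} = \{[2^{-(j+1)},2^{-j}]\}_{j\geq 0}$ when $t=0$. On each dyadic piece Lemma~\ref{lem:integrated-drift-small-scales-h2} gives
\begin{equation*}
\|I_{s_j, s_{j+1}}(u^E)\|_{H^\alpha}^2 \lesssim s_j^{\,1-\alpha} \int_0^{2 s_j} \|u^E_\tau\|_{L^2}^2\,d\tau,
\end{equation*}
and Proposition~\ref{prop:minimiser-small-scales} bounds the right-hand integral by $(2s_j)^{1-\kappa}(X^E_{2s_j})^L$ with $\sup_{\epsilon,E,s\leq 1}\E X^E_s < \infty$. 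Combining, $\|I_{s_j, s_{j+1}}(u^E)\|_{H^\alpha}^{2/L} \lesssim s_j^{(2-\alpha-\kappa)/L} X^E_{2s_j}$. Taking $L\geq 2$ so that $x \mapsto x^{2/L}$ is subadditive, applying this to the triangle inequality for the dyadic decomposition, and summing the geometric series $\sum_j s_j^{(2-\alpha-\kappa)/L}$---which is uniformly bounded in $t$ once $\kappa$ is small enough that $\beta := (2-\alpha-\kappa)/L$ is positive---completes the estimate after taking expectations.

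The main obstacle is the balance between the two Sobolev estimates on integrated drifts: neither Lemma~\ref{lem:integrated-drift-large-scales-h2} (with its $t^{-(\alpha-1)}$ factor) nor Lemma~\ref{lem:integrated-drift-small-scales-h2} (with its $s^{-\alpha}$ factor) alone produces a uniform-in-$t$ bound when $\alpha > 1$, and a single-scale split between ``small'' and ``large'' scales cannot reconcile the two blow-ups. The dyadic decomposition succeeds precisely because on each dyadic interval the $s_j^{-\alpha}$ factor is exactly compensated by the smallness $\int_0^{2s_j}\|u^E_\tau\|_{L^2}^2\,d\tau \lesssim s_j^{1-\kappa}$ provided by Proposition~\ref{prop:minimiser-small-scales}, leaving a benign power $s_j^{2-\alpha-\kappa}$ whose geometric sum over dyadic scales is uniformly bounded. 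This compensation crucially relies on passing to the $2/L$-th fractional moment, since Proposition~\ref{prop:minimiser-small-scales} only controls the $1/L$-th moment of $X^E_s$.
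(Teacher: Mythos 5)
Your proposal is correct and follows essentially the same strategy as the paper's proof: decompose $[t,1]$ (or $[0,1]$) into dyadic subintervals, bound each dyadic piece $I_{s_j,s_{j+1}}(u^E)$ in $H^\alpha$ via Lemma~\ref{lem:integrated-drift-small-scales-h2}, feed the resulting $\int_0^{2s_j}\|u^E_\tau\|_{L^2}^2\,d\tau$ into the fractional-moment bound of Proposition~\ref{prop:minimiser-small-scales}, use subadditivity of $x\mapsto x^{2/L}$ (i.e.\ $L\geq 2$), and sum the geometric series in the exponent $2-\alpha-\kappa>0$. The paper proceeds directly with $t_n=2^{-n}$ at $t=0$ (wrapping Lemma~\ref{lem:integrated-drift-small-scales-h2} inside Lemma~\ref{lem:phi-delta-small-scales} and Lemma~\ref{lem:phi-delta-large-scales} for the large-scale tail), whereas you separate the regimes $\alpha\in(0,1]$ (handled by Lemma~\ref{lem:integrated-drift-large-scales-h1} and Proposition~\ref{prop:minimiser-l2} alone), $\alpha\in(1,2)$ with $t\geq 1$, and the genuinely delicate regime $\alpha\in(1,2)$ with $t\in[0,1)$; your explicit treatment of the $t>0$ starting point also confirms the uniformity in $t$ that the statement (and its use in the proof of Theorem~\ref{thm:coupling-pphi-to-gff-eps}) requires, a point the paper's displayed proof glosses over by working at $t=0$ only. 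The identification of the key compensation — the $s_j^{-\alpha}$ blowup from the Sobolev estimate cancelled against the $s_j^{1-\kappa}$ smallness from Proposition~\ref{prop:minimiser-small-scales}, leaving a summable power $s_j^{2-\alpha-\kappa}$ — is precisely the mechanism the paper uses.
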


The first step towards \eqref{eq:Sobolev-norm-phi-delta} is the following estimate for  the large scale of $\Phi^{\Delta_\epsilon,E}$.

\begin{lemma}
\label{lem:phi-delta-large-scales}
Let $t_0>0$ and $\alpha \in [1,2)$. Then, for $t\geq t_0$, 
\begin{equation}
\label{eq:sobolev-difference-field-expectation}
\sup_{E>0} \sup_{\epsilon >0} \E[\|\Phi_t^{\Delta,E} \|_{H^{\alpha}} ] \lesssim_{t_0, \alpha} \frac{1}{t^{(\alpha-1)/2}}.
\end{equation}
\end{lemma}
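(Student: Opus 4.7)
The plan is to exploit Proposition \ref{prop:bd-polchinski-correspondence}, which identifies
$\Phi_t^{\Delta_\epsilon,E} = I_{t,\infty}^\epsilon(u^E)$ with $u^E_s = -q_s^\epsilon \nabla v_s^{\epsilon,E}(\Phi_s^{\cP_\epsilon,E})$ a minimiser of the (unconditional, $t=\infty$) Bou\'e--Dupuis variational problem. Then one feeds this identification into the deterministic Sobolev estimates of Lemmas \ref{lem:integrated-drift-large-scales-h2} and \ref{lem:integrated-drift-large-scales-h1} for integrated drifts, and closes the argument using the uniform $L^2$-bound on minimisers from Proposition \ref{prop:minimiser-l2}.

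I would split the argument into two regimes. In the regime $t_0 \leq t < 1$, Lemma \ref{lem:integrated-drift-large-scales-h2} applies directly to the drift $u^E$ and yields
\[
\|\Phi_t^{\Delta_\epsilon,E}\|_{H^\alpha}^2 \lesssim_{m,\alpha} \frac{1}{t^{\alpha-1}}\int_t^\infty \|u^E_\tau\|_{L^2}^2\,d\tau.
\]
In the regime $t \geq 1$, Lemma \ref{lem:integrated-drift-large-scales-h2} is not stated, so one proves a companion estimate by a direct Fourier-side computation: for $\tau \geq 1$ one has the pointwise bound $\hat q_\tau^\epsilon(k) \lesssim_m \tau^{-1}(1+|k|^2)^{-1}$, and Cauchy--Schwarz then gives
\[
\|I_{t,\infty}^\epsilon(u)\|_{H^\alpha}^2 \lesssim_m \sum_{k \in \Omega_\epsilon^*} (1+|k|^2)^{\alpha-2}\,\frac{1}{t}\int_t^\infty |\hat u_\tau(k)|^2\,d\tau \leq \frac{1}{t}\int_t^\infty \|u_\tau\|_{L^2}^2\,d\tau,
\]
using $(1+|k|^2)^{\alpha-2} \leq 1$ since $\alpha \leq 2$. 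For $t \geq 1$ and $\alpha \in [1,2)$, $1/t \leq 1/t^{\alpha-1}$, so this matches the decay rate of the first regime.

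Combining the two regimes, taking expectations, and applying Proposition \ref{prop:minimiser-l2} gives
\[
\E[\|\Phi_t^{\Delta_\epsilon,E}\|_{H^\alpha}^2] \lesssim_{m,\alpha,t_0} \frac{1}{t^{\alpha-1}} \sup_{\epsilon,E} \E\Big[\int_0^\infty \|u^E_\tau\|_{L^2}^2\,d\tau\Big] \lesssim_{m,\alpha,t_0} \frac{1}{t^{\alpha-1}}
\]
uniformly in $\epsilon>0$ and $E>0$. A final application of Jensen's inequality, $\E[\|\Phi_t^{\Delta_\epsilon,E}\|_{H^\alpha}] \leq \E[\|\Phi_t^{\Delta_\epsilon,E}\|_{H^\alpha}^2]^{1/2}$, yields the claimed bound $\lesssim_{t_0,\alpha} t^{-(\alpha-1)/2}$.

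There is no substantial obstacle here; the estimate is essentially a routine consequence of the earlier a priori tools once the Polchinski--Bou\'e-Dupuis identification is invoked. The only minor point to watch is the mild mismatch between the statement of Lemma \ref{lem:integrated-drift-large-scales-h2} (formulated for $t<1$) and the range $t\geq t_0$ of the present lemma, which is handled by the elementary direct computation sketched above for $t\geq 1$.
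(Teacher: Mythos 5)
Your proof is correct and follows the same strategy as the paper: identify $\Phi_t^{\Delta_\epsilon,E} = I_{t,\infty}^\epsilon(u^E)$ via Proposition \ref{prop:bd-polchinski-correspondence}, apply the Sobolev estimate for integrated drifts (Lemma \ref{lem:integrated-drift-large-scales-h2}), invoke the uniform $L^2$-bound on minimisers from Proposition \ref{prop:minimiser-l2}, and finish with Jensen. You correctly observe that Lemma \ref{lem:integrated-drift-large-scales-h2} is stated only for $0<t<1$ and supply the missing $t \geq 1$ regime with a direct Fourier computation; this is a small gap that the paper's one-line proof leaves implicit, and your patch is valid.
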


\begin{proof}
Since $\Phi_t^{\Delta_\epsilon,E} = I_{t,\infty}^\epsilon(u^E)$ and since $\E[\int_0^\infty \|\minimiserE_\tau\|_{L^2}^2 d\tau] <\infty$ uniformly in $\epsilon>0$ and $E>0$, the claim follows from Lemma \ref{lem:integrated-drift-large-scales-h2} .
\end{proof}

\begin{lemma}
\label{lem:phi-delta-small-scales}
 Set $\Phi_{s,t}^{\Delta_\epsilon,E} = \int_s^t \dot c_\tau^\epsilon \nabla v_\tau^{\epsilon,E}(\Phi_\tau^{\cP_\epsilon,E}) d\tau$. Then for $\alpha \in (1,2)$
 \begin{equation}
\sup_{E>0}\sup_{\epsilon >0} \E  \Big[ \| \Phi_{s,t}^{\Delta_\epsilon,E}  \|_{H^{\alpha}}^{2/L} \Big] \lesssim \E\Big[ \big( \frac{t-s}{s^\alpha} \int_s^t \| \minimiserE_\tau\|_{L^2}^2 d\tau  \big)^{1/L}\Big].
 \end{equation}
\end{lemma}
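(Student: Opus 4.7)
The statement is essentially a pathwise bound obtained by identifying $\Phi_{s,t}^{\Delta_\epsilon,E}$ with the integrated drift $I_{s,t}^\epsilon(\minimiserE)$ and then invoking the deterministic Sobolev estimate from Lemma \ref{lem:integrated-drift-small-scales-h2}.

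The plan is the following. First, I would recall from Proposition \ref{prop:bd-polchinski-correspondence} that $\minimiserE_\tau = -q_\tau^\epsilon \nabla v_\tau^{\epsilon,E}(\Phi_\tau^{\cP_\epsilon,E})$, and from the Pauli--Villars decomposition that $\dot c_\tau^\epsilon = q_\tau^\epsilon * q_\tau^\epsilon$. Combining these two identities gives the pointwise (in $\tau$) relation
\begin{equs}
\dot c_\tau^\epsilon \nabla v_\tau^{\epsilon,E}(\Phi_\tau^{\cP_\epsilon,E}) = -q_\tau^\epsilon \minimiserE_\tau ,
\end{equs}
so that, inserting into the definition of $\Phi_{s,t}^{\Delta_\epsilon,E}$,
\begin{equs}
\Phi_{s,t}^{\Delta_\epsilon,E} = \int_s^t \dot c_\tau^\epsilon \nabla v_\tau^{\epsilon,E}(\Phi_\tau^{\cP_\epsilon,E}) d\tau = -\int_s^t q_\tau^\epsilon \minimiserE_\tau d\tau = -I_{s,t}^\epsilon(\minimiserE) .
\end{equs}
In particular, $\|\Phi_{s,t}^{\Delta_\epsilon,E}\|_{H^\alpha} = \|I_{s,t}^\epsilon(\minimiserE)\|_{H^\alpha}$ pathwise.

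Second, I would apply the deterministic Sobolev bound on integrated drifts from Lemma \ref{lem:integrated-drift-small-scales-h2}, which, for $\alpha \in (1,2]$ and $0 < s \leq t$, gives
\begin{equs}
\|I_{s,t}^\epsilon(\minimiserE)\|_{H^\alpha}^2 \lesssim_{m,\alpha} \frac{t-s}{s^\alpha} \int_s^t \|\minimiserE_\tau\|_{L^2}^2 d\tau .
\end{equs}
The hypothesis $\minimiserE \in \HH_a$ is ensured (a.s.) by the continuity of $\Phi^{\cP_\epsilon,E}$ and the uniform $L^2$ bound on $\minimiserE$ from Proposition \ref{prop:minimiser-l2}, so the lemma applies.

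Finally, raising both sides to the power $1/L$ (which preserves the inequality since both sides are nonnegative) and taking expectation yields
\begin{equs}
\E\bigl[\|\Phi_{s,t}^{\Delta_\epsilon,E}\|_{H^\alpha}^{2/L}\bigr] \lesssim \E\Bigl[\Bigl(\frac{t-s}{s^\alpha} \int_s^t \|\minimiserE_\tau\|_{L^2}^2 d\tau\Bigr)^{1/L}\Bigr],
\end{equs}
with an implicit constant depending only on $m$ and $\alpha$, and in particular uniform in $\epsilon>0$ and $E>0$. There is no real obstacle here: once the identification $\Phi_{s,t}^{\Delta_\epsilon,E} = -I_{s,t}^\epsilon(\minimiserE)$ is made, everything reduces to a direct application of the Fourier-side estimate already proved in Lemma \ref{lem:integrated-drift-small-scales-h2}. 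The nontrivial work has been done upstream, both in establishing the correspondence (Proposition \ref{prop:bd-polchinski-correspondence}) and in the fractional moment control on $\int_s^t \|\minimiserE_\tau\|_{L^2}^2 d\tau$ from Proposition \ref{prop:minimiser-small-scales}, which will later be combined with this lemma to close the estimate \eqref{eq:Sobolev-norm-phi-delta} in Proposition \ref{prop:Sobolev-norm-phi-delta}.
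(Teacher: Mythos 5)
Your proof is correct and takes essentially the same route as the paper: identify $\Phi_{s,t}^{\Delta_\epsilon,E}$ with $\pm I_{s,t}^\epsilon(\minimiserE)$, apply the pathwise Fourier-side bound of Lemma~\ref{lem:integrated-drift-small-scales-h2}, raise to the power $1/L$, and take expectation. (The sign is a minor bookkeeping discrepancy between the paper's definition of $\Phi_{s,t}^{\Delta_\epsilon,E}$ and its earlier definition of $\Phi_t^{\Delta_\epsilon,E}$; you have it right, and it is immaterial since only the norm enters.)
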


\begin{proof}
Again, the proof is an application of the regularity estimates at the end of Section \ref{ssec:boue-dupuis}. 
Noting that $\Phi_{s,t}^{\Delta_\epsilon,E} = I_{s,t}^\epsilon(\minimiserE)$, by  Lemma \ref{lem:integrated-drift-small-scales-h2} that
\begin{equation}
\| \Phi_{s,t}^{\Delta_\epsilon,E}  \|_{H^\alpha}^{2/L} \lesssim \Big(\frac{t-s}{s^\alpha} \int_s^t \|\minimiserE_\tau\|_{L^2}^2 d\tau\Big)^{1/L},
\end{equation}
from which the claim follows by taking expectation. 
\end{proof}

Combining Lemma \ref{lem:phi-delta-large-scales} and Lemma \ref{lem:phi-delta-small-scales} we can now give the proof of Proposition \ref{prop:Sobolev-norm-phi-delta}.

\begin{proof}[Proof of Proposition \ref{prop:Sobolev-norm-phi-delta}]
Let $\alpha \in (0,2)$ and let $\kappa = (2-\alpha)/r$ for $r$ large enough.
Further, let $(t_n)_n$ be a decreasing sequence of numbers with $t_n\to 0$ as $n\to \infty$ that will be determined later.
Then, using Lemma \ref{lem:phi-delta-small-scales} and Proposition \ref{prop:minimiser-small-scales},
\begin{align}
\E[\| \Phi_0^{\Delta_\epsilon,E} \|_{H^{\alpha}}^{2/L} ] 
&\leq \sum_{n \geq 1} \E \| \Phi_{t_{n+1},t_n}^{\Delta_\epsilon,E} \|_{H^{\alpha}}^{2/L}  + \E \| \Phi_{t_0}^{\Delta_\epsilon,E} \|_{H^{\alpha}}^{2/L} \nnb
& \leq \sum_{n\geq 1} \E\Big[\Big( \frac{t_n-t_{n+1}}{t_{n+1}^\alpha}\int_{t_{n+1}}^{t_{n}} \|\minimiserE_\tau \|_{L^2}^2 d\tau \Big)^{1/L}\Big]  + C  \nnb
&\leq \sum_{n\geq 1} \Big( \frac{t_n-t_{n+1}}{t_{n+1}^\alpha} t_n^{1-\kappa}\Big)^{1/L} \E\Big[\Big( t_n^{-1+\kappa} \int_0^{t_n} \|\minimiserE_\tau \|_{L^2}^2 d\tau \Big)^{1/L}\Big] + C \nnb
&\lesssim \sum_{n \geq 1} \Big( \frac{t_n-t_{n+1}}{t_{n+1}^\alpha} t_n^{1-\kappa}\Big)^{1/L}  + C.
\end{align}
Now, choosing $t_n=2^{-n}$ we see that
\begin{equation}
\frac{t_n-t_{n+1}}{t_{n+1}^\alpha} t_n^{1-\kappa} = \frac{2^{-(n+1)}}{2^{-\alpha(n+1)}} 2^{-n(1-\kappa)} = 2^{-(2-\alpha-\kappa)n + \alpha} 
\end{equation}
and thus, the sum in the last display is finite for the specified values of $\alpha$ and $\kappa$.
\end{proof}

\subsection{Removal of the cut-off: proof of Theorem \ref{thm:coupling-pphi-to-gff-eps}}
\label{ssec:removal-cut-off}

We have collected all results to give the proof of Theorem \ref{thm:coupling-pphi-to-gff-eps} and Corollary \ref{cor:pphi-coupling-continuum}.
The main task in this section is to remove the cut-off by taking the limit $E\to \infty$.

\begin{proof}[Proof of Theorem \ref{thm:coupling-pphi-to-gff-eps}]
We first prove that the sequence of processes $(\Phi^{\Delta_\epsilon,E})_E$ is tight. 
For $R>0$ and $\alpha<1$, let
\begin{align}
\cX_R
=
\Big \{ \Phi \in C_0([0,\infty),X_\epsilon) \colon \sup_{t \in [0,\infty)} \|\Phi\|_{H^\alpha}^2 \leq R \text{ and } \sup_{s< t} \frac{\|\Phi_t - \Phi_s\|_{H^\alpha}^2}{(t-s)^{1-\alpha}} \leq R \Big\}.	
\end{align}
Note that $\cX_R$ is totally bounded and equicontinuous.
Therefore, by the Arz\`ela-Ascoli theorem, $\cX_R \subset C_0([0,\infty),X_\epsilon)$ is precompact, and the closure $\overline{\cX_R}$ is compact.
Moreover, we have by Lemma \ref{lem:integrated-drift-large-scales-h1} and Lemma \ref{lem:integrated-drift-small-scales-h1} that 
\begin{align}
\sup_{t\geq 0}\|\Phi_t^{\Delta_\epsilon,E} \|_{H^\alpha}^2 &\lesssim \int_0^\infty \|\minimiserE_\tau\|_{L^2}^2 d\tau   \\
 \|\Phi_t^{\Delta_\epsilon,E} - \Phi_s^{\Delta_\epsilon,E}\|_{H^\alpha}^2 &\lesssim (t-s)^{1-\alpha} \int_{0}^\infty \| \minimiserE_\tau\|_{L^2}^2 d\tau. 
\end{align}
Therefore, we have for some constant $C>0$, which is independent of $\epsilon$ and $E$,
\begin{align}
\P ( \Phi^{\Delta_\epsilon,E}	\in \overline{\cX_R}^c ) &\leq 
\P ( \Phi^{\Delta_\epsilon,E}	\in \cX_R^c )
\leq 
\P \Big( \sup_{t \in [0,\infty)} \|\Phi_t^{\Delta_\epsilon,E}\|_{H^\alpha}^2 +
 \sup_{s< t} \frac{\|\Phi_t^{\Delta_\epsilon,E} - \Phi_s^{\Delta_\epsilon,E} \|_{H^\alpha}^2}{(t-s)^{1-\alpha}} > R \Big )
\nnb
&\leq \P \Big( \int_0^\infty \|\minimiserE_\tau\|_{L^2}^2 d\tau > R/C \Big) \leq
\frac{C}{R} \E \Big[ \int_0^\infty \|\minimiserE_\tau\|_{L^2}^2 d\tau \Big].
\end{align}
So, for a given $\kappa>0$, we can choose $R$ large enough such that
\begin{equation}
\sup_{E>0} \P \big( \Phi^{\Delta_\epsilon,E}	\in \overline{\cX_R}^c \big) \leq \frac{C}{R} \sup_{E>0} \E \Big[ \int_0^\infty \|\minimiserE_\tau\|_{L^2}^2 d\tau \Big] <\kappa,
\end{equation}
which establishes tightness for the sequence $(\Phi^{\Delta_\epsilon,E})_E \subseteq C_0([0,\infty), X_\epsilon)$.

By Prohorov's theorem there is a process  $ \Phi^{\Delta_\epsilon}$ and a subsequence $(E_k)_k$ such that $\Phi^{\Delta,E_k} \to \Phi^{\Delta_\epsilon}$ in distribution as $k\to \infty$.
By \eqref{eq:coupling-cut-off-fields} there exists a process $\Phi^{\cP_\epsilon} \equiv \Phi^{\Delta_\epsilon} + \Phi^{\GFF_\epsilon}$,
such that $\Phi^{\cP_\epsilon,E_k}\to \Phi^{\cP_\epsilon}$ in distribution as $k\to \infty$. 

Since $e^{-v_0^{\epsilon,E}(\phi)} \to e^{-v_0^\epsilon(\phi)}$ as $E\to \infty$ for every $\phi \in X_\epsilon$ and since $e^{-v_0^{\epsilon,E}} \leq e^{C_\epsilon}$ for some constant $C_\epsilon>0$, we have by dominated convergence for every bounded and continuous $f\colon X_\epsilon \to \R$
\begin{equation}
\label{eq:weak-convergence-nu-P}
\E_{\nu^{\cP_\epsilon,E}}[f] \to \E_{\nu^{\cP_\epsilon}}[f],
\end{equation}
so that $\nu^{\cP_\epsilon,E}\to \nu^{\cP_\epsilon}$ in distribution. 
Since $\Phi_0^{\cP_\epsilon,E} \sim \nu^{\cP_\epsilon,E}$ we conclude that $\Phi_0^{\cP_\epsilon} \sim \nu^{\cP_\epsilon}$ by uniqueness of weak limits.
Moreover, since independence is preserved under weak limits, we have that for every $t>0$, $\Phi_t^{\cP_\epsilon}$ is independent of $\Phi_0^{\GFF_\epsilon} - \Phi_t^{\GFF_\epsilon}$.

Finally, the bounds (\ref{eq:phi-delta-h1}--\ref{eq:phi-delta-to-0}) follow from the respective estimates on $\Phi^{\Delta_\epsilon,E}$ and the fact that the norms are continuous maps from $C_0([0,\infty), X_\epsilon)$ to $\R$.
For instance, \eqref{eq:phi-delta-1-2} is proved by
\begin{align}
\sup_{\epsilon>0} \sup_{t\geq 0}  \E\Big[ \| \Phi_t^{\Delta_\epsilon} \|_{H^\alpha}^{2/L} \Big]&=
\sup_{\epsilon>0} \sup_{t\geq 0} \lim_{C\to \infty} \E \Big[ \| \Phi_t^{\Delta_\epsilon} \|_{H^\alpha}^{2/L} \wedge C \Big]
= \sup_{\epsilon>0} \sup_{t\geq 0}  \lim_{C\to \infty} \lim_{E\to \infty} \E \Big[ \| \Phi_t^{\Delta_\epsilon,E} \|_{H^\alpha}^{2/L} \wedge C \Big] \nnb
&\leq  \sup_{\epsilon>0} \sup_{t \geq 0} \sup_{E >0} \E \Big[ \| \Phi_t^{\Delta_\epsilon,E} \|_{H^\alpha}^{2/L} \Big] <\infty,
\end{align}
where the last display is finite by Proposition \ref{prop:Sobolev-norm-phi-delta}.
\end{proof}

\subsection{Lattice convergence: proof of Corollary \ref{cor:pphi-coupling-continuum}}
\label{ssec:lattice-convergence}

In this section we prove that as $\epsilon \to 0$ the processes $(\Phi^{\Delta_\epsilon})_\epsilon$ converge along a subsequence $(\epsilon_k)_k$ to a continuum process $\Phi^{\Delta_0}$.
In order to obtain a continuum process $\Phi^{\cP_0}$ from the sequence  $(\Phi^{\cP_\epsilon})_\epsilon$,
we also need the convergence of the decomposed Gaussian free field $\Phi^{\GFF_\epsilon}$,
which is the content of Lemma \ref{lem:PhiGFF-limit-pauli-villars} below.
Define for $t\geq0$
\begin{equation} 
\label{eq:GFF-fourier-pauli-villars}
  \Phi_t^{\GFF_0} =
  \int_t^\infty q_s^0 dW_s= 
  \sum_{k\in \Omega^*}  e^{ik\cdot(\cdot)}\int_t^\infty \hat q_u^0(k) d\hat W_u(k), \quad \hat q_u^0(k) = \frac{1}{t(-|k|^2 +m^2) + 1}.
\end{equation}
Note that for $t>0$ the convergence of the sum is understood in $H^\alpha$, $\alpha <1$, while for $t=0$ it is in $H^\alpha$ for $\alpha<0$. 

\begin{lemma}
\label{lem:PhiGFF-limit-pauli-villars}
Let $I_\epsilon\colon L^2(\Omega_\epsilon) \to L^2(\Omega)$ be the isometric embedding  defined in Section \ref{ssec:fourier}.
Then, for any $t_0 > 0$ and $\alpha < 1$, we have
\begin{align}
\label{eq:gff-limit-eps-to-positive}
\E\qa{ \sup_{t\geq t_0}\norm{I_\epsilon\Phi^{\GFF_\epsilon}_t -\Phi^{\GFF_{0}}_t}_{H^\alpha}^2} &\to 0.
\end{align}
Moreover, if $t_0=0$ we have for $\alpha<0$
\begin{align}
\label{eq:gff-limit-eps-to-0}
    \E\qa{ \sup_{t\geq 0}\| I_\epsilon \Phi_t^{\GFF_\epsilon}-\Phi_t^{\GFF_{0}}\|_{H^{\alpha}}^2 } \to 0.
  \end{align}
\end{lemma}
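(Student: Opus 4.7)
The key observation is that mode-by-mode in Fourier, the difference
\begin{equs}
M_t \;\equiv\; I_\epsilon \Phi_t^{\GFF_\epsilon} - \Phi_t^{\GFF_0} \;=\; \sum_{k\in \Omega^*} e^{ik\cdot} A_t(k),  \qquad A_t(k) \;=\; \int_t^\infty f_u(k)\, d\hat W_u(k),
\end{equs}
is a sum of (almost) independent backward stochastic integrals, with integrand
\begin{equs}
f_u(k) \;=\; \begin{cases} \hat q_u^\epsilon(k) - \hat q_u^0(k), & k \in \Omega_\epsilon^*, \\ -\,\hat q_u^0(k), & k \in \Omega^* \setminus \Omega_\epsilon^*. \end{cases}
\end{equs}
Consequently $\| M_t \|_{H^\alpha}^2 = \sum_{k} (1+|k|^2)^\alpha |A_t(k)|^2$, and the proof amounts to controlling each mode separately and summing.

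\textbf{Step 1 (passing the supremum through expectation).} For fixed $k$ and $t\geq t_0$, write $A_t(k) = A_{t_0}(k) - N_t(k)$ where $N_s(k) = \int_{t_0}^s f_u(k)\, d\hat W_u(k)$ is a forward $L^2$-martingale on $[t_0,\infty)$ with $N_\infty(k) = A_{t_0}(k)$ a.s. Doob's maximal inequality applied to $N(k)$ gives $\E[\sup_{t\geq t_0} |A_t(k)|^2] \leq 10\, \E[|A_{t_0}(k)|^2]$. Summing the weighted Fourier expansion via Tonelli,
\begin{equs}
\E\Big[\sup_{t\geq t_0} \|M_t\|_{H^\alpha}^2\Big] \;\lesssim\; \sum_{k \in \Omega^*} (1+|k|^2)^\alpha\, \E[|A_{t_0}(k)|^2] \;=\; \sum_{k \in \Omega^*} (1+|k|^2)^\alpha \int_{t_0}^\infty |f_u(k)|^2 du,
\end{equs}
where the last identity is the It\^o isometry on each mode.

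\textbf{Step 2 (Fourier estimates on $f_u$).} For $k\in \Omega_\epsilon^*$ I would use the resolvent identity
\begin{equs}
\hat q_u^\epsilon(k) - \hat q_u^0(k) \;=\; u\bigl(|k|^2 - (-\hat\Delta^\epsilon(k))\bigr)\hat q_u^\epsilon(k)\hat q_u^0(k),
\end{equs}
together with \eqref{eq:fourier-multipliers-laplacians-difference}, giving $|f_u(k)|^2 \lesssim u^2 |k|^4 h(\epsilon k)^2 \hat q_u^\epsilon(k)^2 \hat q_u^0(k)^2$. Elementary integration in $u$ yields
\begin{equs}
\int_{t_0}^\infty |f_u(k)|^2 du \;\lesssim\; \begin{cases} t_0^{-1}\, |k|^{-4}\, h(\epsilon k)^2, & t_0 > 0, \\ (1+|k|^2)^{-1}\, h(\epsilon k)^2, & t_0 = 0, \end{cases}
\end{equs}
using in the $t_0=0$ case the observation that $\int_0^\infty u^2 \hat q_u^\epsilon(k)^2 \hat q_u^0(k)^2 du \lesssim (c|k|^2+m^2)^{-1}(|k|^2+m^2)^{-2}$ (by a rescaling $v=u(|k|^2+m^2)$). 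For $k \notin \Omega_\epsilon^*$ the direct computation gives $\int_{t_0}^\infty |\hat q_u^0(k)|^2 du \lesssim t_0^{-1} |k|^{-4}$ when $t_0>0$ and $\lesssim (|k|^2+m^2)^{-1}$ when $t_0 = 0$.

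\textbf{Step 3 (dominated convergence).} The weighted sums $\sum_k (1+|k|^2)^\alpha$ times the bounds above are majorised by $\sum_k (1+|k|^2)^{\alpha - 2}$ for $t_0 > 0$ (requires $\alpha<1$) and by $\sum_k (1+|k|^2)^{\alpha - 1}$ for $t_0 = 0$ (requires $\alpha<0$). In both regimes the majorant is summable, so the low-frequency contribution ($k\in \Omega_\epsilon^*$) vanishes by dominated convergence since $h(\epsilon k) \to 0$ pointwise, and the high-frequency contribution ($k \in \Omega^*\setminus \Omega_\epsilon^*$) is the tail $\sum_{|k|>\pi/\epsilon}$ of a convergent series, hence vanishes as $\epsilon \to 0$.

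\textbf{Main obstacle.} The delicate regime is $t_0 = 0$: integrating down to $u = 0$ forces the use of the resolvent identity above (the trivial bound $|\hat q_u^\epsilon|, |\hat q_u^0| \leq 1$ is too crude to yield a small-$\epsilon$ gain), and the resulting summability threshold $\alpha < 0$ saturates exactly at the constraint in the statement. Everything else is a careful bookkeeping of the Pauli-Villars Fourier multipliers.
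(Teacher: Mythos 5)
Your proof is correct and takes essentially the same approach as the paper: reduce the supremum via Doob's maximal $L^2$ inequality (you apply it mode-by-mode to the scalar backward martingales and sum via Tonelli, whereas the paper applies it once to the $H^\alpha$-valued backward martingale; the two are interchangeable), then use the Pauli--Villars Fourier multiplier difference estimate --- your resolvent identity is precisely the algebraic step behind the paper's \eqref{eq:fourier-multipliers-q-difference} --- and conclude by weighted summation in $k$. The only cosmetic difference is that you finish by pointwise dominated convergence, while the paper extracts an explicit rate $O(\epsilon^\delta)$ from $h^2(\epsilon k)\lesssim(\epsilon|k|)^\delta$; both give the stated conclusion.
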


\begin{proof}
Note that the process $I_\epsilon \Phi^{\GFF_\epsilon} - \Phi^{\GFF_0}$ is a backward martingale adapted to the filtration $\cF^t$ with values in $H^\alpha$ with $\alpha<1$ for $t_0>0$ and $\alpha<0$ for $t_0=0$.
Thus, the $H^\alpha$ norm of this process is a real valued submartingale. Therefore, we have by Doob's $L^2$ inequality, 
\begin{align}
\E\qa{ \sup_{t\geq t_0}\norm{I_\epsilon\Phi^{\GFF_\epsilon}_t -\Phi^{\GFF_{0}}_t}_{H^\alpha}^2} \lesssim \E\qa{ \norm{I_\epsilon\Phi^{\GFF_\epsilon}_{t_0} -\Phi^{\GFF_{0}}_{t_0}}_{H^\alpha}^2}, 
\end{align}
so it suffices to prove that the right-hand side converges to $0$ for the specified values of $t_0$ and $\alpha$.

To this end, we consider the difference of the Fourier coefficients $\hat q^\epsilon_t(k)$ and $\hat q_t^0(k)$.
By \eqref{eq:fourier-multipliers-laplacians-difference}, we have that
\begin{equs}
\begin{split}
\label{eq:fourier-multipliers-q-difference}
0\leq \hat q_t^\epsilon(k) - \hat q_t^0(k) &= \frac{1}{t(-\hat \Delta^\epsilon(k) + m^2) + 1} - \frac{1}{t(-\hat \Delta^0(k) + m^2) + 1} 
\\
&\leq \frac{t |k|^2 h(\epsilon k)}{ \big[t(c |k|^2+ m^2) + 1\big] \big[ t(|k|^2 + m^2) + 1\big] }
\leq \frac{h(\epsilon k)}{t(c |k|^2 + m^2) + 1},
\end{split}
\end{equs}
where we recall that $c =4/\pi^2$ as below \eqref{eq:fourier-multipliers-laplacians-difference}. By the definition of the Sobolev norm, we have
\begin{multline}
\| I_\epsilon \Phi_{t_0}^{\GFF_\epsilon} -\Phi_{t_0}^{\GFF_{0}}\|_{H^{\alpha}(\Omega)}^2 
= \sum_{k\in \Omega_\epsilon^*} (1+ |k|^2)^{\alpha} \Big|\int_{t_0}^\infty (\hat q_u^\epsilon(k) - \hat q_u^0(k)) d\hat W_u(k)   \Big|^2 
\\
+ 
\sum_{k \in \Omega^*\setminus \Omega_\epsilon^*}  (1+ |k|^2)^{\alpha} \Big|\int_{t_0}^\infty \hat q_u^0(k) d\hat W_u(k)   \Big|^2.
\end{multline}
Taking expectation and using the estimate \eqref{eq:fourier-multipliers-q-difference} for the first sum yields
\begin{equs}
\label{eq:expectation-sobolev-norm}
\begin{split}
\E \Big[ \| I_\epsilon \Phi_{t_0}^{\GFF_\epsilon} -\Phi_{t_0}^{\GFF_{0}}\|_{H^{\alpha}(\Omega)}^2 \Big]
&\leq \sum_{k\in \Omega_\epsilon^*} (1+ |k|^2)^{\alpha} \int_{t_0}^\infty \frac{h^2(\epsilon k)}{\big(u(c |k|^2 + m^2) + 1\big)^2} du
\nnb
&+ 
\sum_{k \in \Omega^*\setminus \Omega_\epsilon^*}  (1+ |k|^2)^{\alpha} \int_{t_0}^\infty  \frac{1}{\big(u(|k|^2+ m^2) + 1\big)^2}  du 
\nnb
&\leq \sum_{k\in \Omega_\epsilon^*} (1+ |k|^2)^{\alpha} \frac{h^2(\epsilon k)}{c |k|^2 + m^2} \frac{1}{t_0(c |k|^2 + m^2)  + 1}
\nnb
&+ 
\sum_{k \in \Omega^*\setminus \Omega_\epsilon^*} (1+ |k|^2)^{\alpha} \frac{1}{|k|^2 + m^2} \frac{1}{t_0( |k|^2 + m^2)  + 1}.
\end{split}
\end{equs}
Further note that  $h^2(\epsilon k) \leq O(\epsilon |k|)^\delta$ for any $\delta <4$. 
We can now discuss the convergence to $0$ as $\epsilon\to 0$ for $\alpha$ and $t_0$ as above. For $t_0>0$ the first sum on the right-hand side can be bounded by
\begin{equs}
\label{eq:gff-convergence-expectation-first-sum}
\begin{split}
&\sum_{k\in \Omega_\epsilon^*} (1+ |k|^2)^{\alpha} \frac{h(\epsilon k)}{c |k|^2 + m^2} \frac{1}{t_0(c |k|^2 + m^2)  + 1} \nnb
& \lesssim \epsilon^\delta \sum_{k\in \Omega_\epsilon^*} (1+ |k|^2)^{\alpha} \frac{|k|^\delta}{c |k|^2 + m^2} \frac{1}{t_0(c |k|^2 + m^2)  + 1} 
\lesssim_{t_0} \epsilon^\delta \sum_{k\in \Omega^*} \frac{1}{\big(1+|k|^2\big)^{2-\alpha-\delta}}.
\end{split}
\end{equs}
The last sum is finite for $\alpha<1$ and $\delta$ small enough (depending on $\alpha$), and hence vanishes as $\epsilon \to 0$.
For the second sum on the right-hand side of \eqref{eq:expectation-sobolev-norm} we similarly have
\begin{align}
 \sum_{k \in \Omega^*\setminus \Omega_\epsilon^*}  (1+ |k|^2)^{\alpha}  \frac{1}{|k|^2+m^2} \frac{1}{t_0 (|k|^2+m^2)+1} 
\lesssim_{t_0} \sum_{k \in \Omega^*\setminus \Omega_\epsilon^*}  \frac{1}{\big ( 1+|k|^2\big)^{2-\alpha} },
\end{align}
which is finite uniformly in $\epsilon$ for $\alpha<1$, and hence converges to $0$ as $\epsilon \to 0$.
Together with \eqref{eq:gff-convergence-expectation-first-sum} this shows the convergence in \eqref{eq:gff-limit-eps-to-positive}.

For the proof of \eqref{eq:gff-limit-eps-to-0}, we note that both sums on the right-hand side lose a term of order $O(|k|^{-2})$ when $t_0=0$.
Using the same arguments as for the case $t_0>0$, we find that the corresponding sums are finite for $\alpha<0$ in this case.
\end{proof}

The next result is the convergence of the difference field $\Phi^{\Delta_\epsilon}$ along a subsequence $(\epsilon_k)_k$, and its proof is almost identical to the removal of the cut-off in the proof of Theorem \ref{thm:coupling-pphi-to-gff-eps}.
Recall that we denote by $C_0([0,\infty), \cS)$ the set of all continuous processes with values in a metric space $(\cS, \|\cdot\|_{\cS})$ that vanish at $\infty$
and that $I_\epsilon\colon L^2(\Omega_\epsilon) \to L^2(\Omega)$ is the isometric embedding.

\begin{proposition}
Let $\alpha<1$. Then $(I_\epsilon \Phi^{\Delta_\epsilon})_\epsilon$ is a tight sequence of processes in $C_0([0,\infty), H^{\alpha})$.
In particular, there is a process $\Phi^{\Delta_0} \in C_0([0,\infty), H^{\alpha})$ and a subsequence $(\epsilon_k)_k$, $\epsilon_k \to 0$ as $k\to\infty$
such that the laws of $\Phi^{\Delta_{\epsilon_k}}$ on $C_0([0,\infty), H^{\alpha})$ converge weakly to the law of $\Phi^{\Delta_0}$.
\end{proposition}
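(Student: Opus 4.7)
The plan is to prove tightness of $(I_\epsilon \Phi^{\Delta_\epsilon})_\epsilon$ in $C_0([0,\infty), H^{\alpha})$ and then invoke Prokhorov's theorem. The argument parallels the tightness step in the proof of Theorem \ref{thm:coupling-pphi-to-gff-eps}, but whereas there the finite-dimensionality of $X_\epsilon$ provided pointwise compactness for free, here it must be replaced by the compact Sobolev embedding $H^{\alpha'}(\Omega) \hookrightarrow H^{\alpha}(\Omega)$ for some fixed $\alpha' \in (\alpha, 1)$.

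First I would collect the pathwise estimates at the level of the cut-off fields. Writing $u^{\epsilon,E}_s = -q_s^\epsilon \nabla v_s^{\epsilon,E}(\Phi_s^{\cP_\epsilon,E})$ and $M^{\epsilon,E} = \int_0^\infty \|u^{\epsilon,E}_\tau\|_{L^2}^2 d\tau$, Proposition \ref{prop:bd-polchinski-correspondence} gives $\Phi^{\Delta_\epsilon,E}_t = I^\epsilon_{t,\infty}(u^{\epsilon,E})$, and Proposition \ref{prop:minimiser-l2} yields $\sup_{\epsilon,E} \E[M^{\epsilon,E}] < \infty$. Combined with Lemmas \ref{lem:integrated-drift-large-scales-h1} and \ref{lem:integrated-drift-small-scales-h1}, and using that $I_\epsilon$ is an isometric embedding of the relevant Sobolev spaces, one obtains the pathwise bounds
\begin{align}
\sup_{t \geq T} \|I_\epsilon \Phi^{\Delta_\epsilon,E}_t\|_{H^{\alpha'}}^2 &\lesssim \int_T^\infty \|u^{\epsilon,E}_\tau\|_{L^2}^2 d\tau, \\
\|I_\epsilon \Phi^{\Delta_\epsilon,E}_t - I_\epsilon \Phi^{\Delta_\epsilon,E}_s\|_{H^{\alpha'}}^2 &\lesssim (t-s)^{1-\alpha'} M^{\epsilon,E}
\end{align}
for all $0 \leq s < t$ and $T \geq 0$, uniformly in $\epsilon, E > 0$.

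I would then build a family of compact exhausting sets. For $R > 0$ and a decreasing function $\omega \colon [0,\infty) \to [0,\infty)$ with $\omega(T) \to 0$ as $T \to \infty$, set
\[
\mathcal{K}_{R,\omega} = \Big\{ f \in C([0,\infty), H^{\alpha'}) \colon \sup_{s \neq t} \frac{\|f(t) - f(s)\|_{H^{\alpha'}}^2}{|t-s|^{1-\alpha'}} \leq R, \ \sup_{t \geq T}\|f(t)\|_{H^{\alpha'}}^2 \leq \omega(T) \text{ for all } T \geq 0 \Big\}.
\]
By the compact embedding $H^{\alpha'} \hookrightarrow H^\alpha$, the time slices of elements of $\mathcal{K}_{R,\omega}$ are relatively compact in $H^\alpha$, equicontinuity in $H^{\alpha'}$ descends to equicontinuity in $H^\alpha$, and the tail bound controls the behaviour at infinity; hence Arz\`ela-Ascoli yields that $\overline{\mathcal{K}_{R,\omega}}$ is compact in $C_0([0,\infty), H^\alpha)$. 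Using Markov's inequality on $M^{\epsilon,E}$ to fix $R$, and on $\int_{T_n}^\infty \|u^{\epsilon,E}_\tau\|_{L^2}^2 d\tau$ along a sequence $T_n \to \infty$ via a countable union bound together with the pathwise monotonicity from Step 1, one constructs $\omega$ so that $\sup_{\epsilon, E}\P(I_\epsilon \Phi^{\Delta_\epsilon,E} \notin \overline{\mathcal{K}_{R,\omega}}) < \kappa$ for any prescribed $\kappa > 0$.

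Finally, since $I_\epsilon \Phi^{\Delta_\epsilon,E_k} \to I_\epsilon \Phi^{\Delta_\epsilon}$ in distribution along the subsequence produced in the proof of Theorem \ref{thm:coupling-pphi-to-gff-eps}, the Portmanteau theorem transfers the small-mass estimate to the limit, yielding tightness of $(I_\epsilon \Phi^{\Delta_\epsilon})_\epsilon$ in $C_0([0,\infty), H^\alpha)$; Prokhorov's theorem then produces the subsequential limit $\Phi^{\Delta_0}$. The main obstacle is the uniform vanishing-at-infinity condition: one must choose a single modulus $\omega$ that simultaneously controls $\sup_{t \geq T}\|I_\epsilon \Phi^{\Delta_\epsilon,E}_t\|_{H^{\alpha'}}$ for \emph{all} large $T$ with high probability, which is achieved by combining the countable union bound along $T_n \to \infty$ with the deterministic monotonicity of the tail to interpolate between the $T_n$.
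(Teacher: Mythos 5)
Your proof follows the same architecture as the paper's (an exhausting family of compact sets, Markov's inequality and Portmanteau to transfer tightness through the $E\to\infty$ limit, then Prokhorov), but it is genuinely more careful in the two places where the paper's own argument is informal. The paper's $\cX_R$ imposes both the $\sup$-bound and the H\"older modulus in the same space $H^\alpha$; in the infinite-dimensional continuum setting this does not give relative compactness of the time slices, so the paper's appeal to Arz\`ela--Ascoli is incomplete. You correctly insert the intermediate exponent $\alpha'\in(\alpha,1)$ and the compact embedding $H^{\alpha'}\hookrightarrow H^\alpha$. Likewise, the paper's $\cX_R$ has no control on the decay of $\|\Phi_t\|$ as $t\to\infty$, so its closure need not be compact in $C_0([0,\infty),H^\alpha)$ with the sup-norm (a bump translating to infinity stays in $\cX_R$ with no convergent subsequence); your decreasing modulus $\omega$ with $\omega(T)\to0$ addresses exactly this.

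There is, however, a genuine gap in your verification that a single $\omega$ works uniformly in $\epsilon,E$. With the pathwise bound you wrote, $\sup_{t\geq T}\|I_\epsilon\Phi^{\Delta_\epsilon,E}_t\|_{H^{\alpha'}}^2\lesssim\int_T^\infty\|u^{\epsilon,E}_\tau\|_{L^2}^2\,d\tau$, the union bound along $T_n\to\infty$ requires $\sup_{\epsilon,E}\E\big[\int_{T_n}^\infty\|u^{\epsilon,E}_\tau\|^2_{L^2}\,d\tau\big]\to0$, which does \emph{not} follow from Proposition~\ref{prop:minimiser-l2} alone: that proposition bounds the full integral uniformly but says nothing about uniform decay of its tail, and a uniformly bounded family of nonnegative random variables need not have uniformly small tails. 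To close the argument you need the decay factor in $T$ that the Fourier multipliers actually produce: the computation in the proof of Lemma~\ref{lem:integrated-drift-large-scales-h1} gives, for $\alpha'\leq 1$ and $T\geq 1$, the sharper bound
\begin{equation}
\sup_{t\geq T}\|I_\epsilon\Phi^{\Delta_\epsilon,E}_t\|_{H^{\alpha'}}^2
\lesssim_m
\frac{1}{T}\int_T^\infty\|u^{\epsilon,E}_\tau\|_{L^2}^2\,d\tau
\leq
\frac{M^{\epsilon,E}}{T},
\end{equation}
using $\big(T(c|k|^2+m^2)+1\big)^{-1}\leq (Tm^2+1)^{-1}$. (Alternatively, Lemma~\ref{lem:phi-delta-large-scales} with an exponent $\alpha''\in(1,2)$ supplies an expectation bound decaying as $t^{-(\alpha''-1)/2}$, and $H^{\alpha''}\hookrightarrow H^\alpha$ compactly.) With the extra power of $T$, choosing for instance $T_n=2^n$ and $\omega(T_n)=T_n^{-1/2}$ makes the Markov--union sum converge uniformly in $\epsilon,E$, and the rest of your argument goes through.
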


\begin{proof}
For $R>0$ and $\alpha<1$, let
\begin{align}
\cX_R
=
\Big \{ \Phi \in C_0([0,\infty),H^\alpha) \colon \sup_{t \in [0,\infty)} \|\Phi\|_{H^\alpha}^2 \leq R \text{ and } \sup_{s< t} \frac{\|\Phi_t - \Phi_s\|_{H^\alpha}^2}{(t-s)^{1-\alpha}} \leq R \Big\}.	
\end{align}
As in the proof of Theorem \ref{thm:coupling-pphi-to-gff-eps} the closure $\overline{\cX_R}$ is compact by the Arz\`ela-Ascoli theorem.
Moreover, we have
\begin{align}
\sup_{t\geq 0}\|I_\epsilon \Phi_t^{\Delta_\epsilon,E} \|_{H^\alpha}^2 & \lesssim \int_0^\infty \|\minimiserE_\tau\|_{L^2}^2 d\tau , \\
 \|I_\epsilon \Phi_t^{\Delta_\epsilon,E} - I_\epsilon \Phi_s^{\Delta_\epsilon,E} \|_{H^\alpha}^2 &\lesssim (t-s)^{1-\alpha} \int_{0}^\infty \| \minimiserE_\tau\|_{L^2}^2 d\tau ,
\end{align}
and thus, by the weak convergence of $(\Phi^{\Delta_\epsilon,E})_E$ as $E\to \infty$ (along a subsequence $(E_k)_k$),
we have for some constant $C>0$, which is independent of $\epsilon$ and $E$,
\begin{align}
\P (I_\epsilon \Phi^{\Delta_\epsilon}	\in \overline{\cX_R}^c ) 
&\leq \liminf_{k\to \infty} \P(I_\epsilon \Phi^{\Delta_\epsilon,E_k}	\in \overline{\cX_R}^c ) \leq 
\liminf_{k\to \infty} \P ( I_\epsilon \Phi^{\Delta_\epsilon,E_k}	\in \cX_R^c )
 \nnb
&\leq \liminf_{k\to \infty} \P \Big(  \int_0^\infty \|u^{E_k}_\tau\|_{L^2}^2 d\tau > R/C \Big) 
\leq \sup_{E>0}\P \Big( \int_0^\infty \|\minimiserE_\tau\|_{L^2}^2 d\tau > R/C \Big)  \nnb
& \leq \sup_{E>0}\frac{C}{R} \E \Big[ \int_0^\infty \|\minimiserE_\tau\|_{L^2}^2 d\tau \Big].
\end{align}
So, for a given $\kappa>0$, we can choose $R$ large enough such that
\begin{equation}
\sup_{\epsilon>0} \P ( I_\epsilon \Phi^{\Delta_\epsilon}	\in \overline{\cX_R}^c ) \leq \frac{2}{R} \sup_{\epsilon>0} \sup_{E>0} \E \Big[ \int_0^\infty \|\minimiserE_\tau\|_{L^2}^2 d\tau \Big] <\kappa,
\end{equation}
which establishes tightness for the sequence $(I_\epsilon\Phi^{\Delta_\epsilon})_\epsilon \subseteq C_0([0,\infty], H^{\alpha})$. The existence of a weak limit $\Phi^{\Delta_0}$ then follows by Prohorov's theorem.
\end{proof}

\begin{proof}[Proof of Corollary \ref{cor:pphi-coupling-continuum}]
Since $\Phi^{\Delta_{\epsilon_k}} \to \Phi^{\Delta_{0}}$ in distribution as $k\to \infty$, we also have that there exists a process $\Phi_t^{\cP_0} \equiv \Phi^{\Delta_0} + \Phi^{\GFF_0}$,
such that $\Phi^{\cP_{\epsilon_k}} \to \Phi^{\cP_{0}}$ in distribution as $k\to \infty$. 
Moreover, as $\epsilon\to 0$, we have that $\nu^{\cP_\epsilon} \to \nu^{\cP}$, where $\nu^\cP$ is the continuum $\cP(\phi)_2$ measure, see also Proposition \ref{prop:weak-convergence-nu-t} below.

Finally, the estimates on the norms of $ \Phi^{\Delta_0}$ and the independence of $\Phi_t^{\cP_0}$ and $\Phi_0^{\GFF_0} - \Phi_t^{\GFF_0}$ follow from the convergence in distribution similarly as in the proof of Theorem \ref{thm:coupling-pphi-to-gff-eps}.
\end{proof}

\subsection{Uniqueness of the limiting law for fixed $t > 0$}

For the discussion of the maximum, we need a refined statement on the convergence of $(\Phi_t^{\cP_\epsilon})_\epsilon$ for $t>0$ as $\epsilon \to 0$.
More precisely, we prove that the law of the limiting field $\Phi_t^{\cP_{0}}$ for a fixed $t>0$ does not depend on the subsequence.
By the same arguments that led to \eqref{eq:weak-convergence-nu-P}, we have that $\Phi_t^{\cP_\epsilon}$ is distributed as the renormalised measure $\nu_t^{\cP_\epsilon}$ defined by
\begin{equs}
\label{eq:ren-measure}
\E_{\nu_t^{\cP_\epsilon}} [F] 
= 
e^{v_\infty^\epsilon(0)} \EE_{c_\infty^\epsilon -c_t^\epsilon} \big[ F(\zeta) e^{-v_t^\epsilon(\zeta)} \big],
\end{equs}
where $F\colon X_\epsilon \to \R$, and $v_t^\epsilon$ is the renormalised potential defined in \eqref{eq:pphi-renormalised-potential-cut-off} for $E=\infty$.
Let $(I_\epsilon)_*\nu_t^{\cP_\epsilon}$ denote the pushforward measure of $\nu_t^{\cP_\epsilon}$ under the isometric embedding $I_\epsilon$. Then we have the following convergence to a unique limit as $\epsilon \to 0$. 

\begin{proposition}
\label{prop:weak-convergence-nu-t}
As $\epsilon \to 0$, we have for $t>0$ that, as measures on $H^\alpha(\Omega)$ for every $\alpha <1$,  $(I_{\epsilon})_*\nu_t^{\cP_\epsilon}$  converges weakly to $\nu_t^\cP$ given by
\begin{equs}
\label{eq:nu-t-continuum}
\E_{\nu_{t}^\cP}[F]
=
e^{v_\infty^0(0)}  \E \big[ F(Y_\infty - Y_t) e^{-v_t^0(Y_\infty-Y_t)} \big]
\end{equs}
for $F\colon H^\alpha (\Omega) \to \R$ bounded and measurable,
and where $e^{-v_\infty^0(0)} = \E\big[ e^{-\int_{\Omega} \wick{\cP(Y_{\infty})}dx}\big]$.

Moreover, for $t=0$, the weak convergence $(I_{\epsilon})_*\nu_0^{\cP_\epsilon} \to \nu_0^\cP$ holds as measures on $H^\alpha(\Omega)$ for any $\alpha <0$ and with $\nu_0^\cP$ defined by \eqref{eq:nu-t-continuum} with $t=0$ and $F\colon H^\alpha(\Omega) \to \R$ for $\alpha <0$.
\end{proposition}

\begin{remark}
Note that, although $\wick{\cP(Y_\infty)}$ is almost surely a distribution of negative regularity and not a function,
the expression $\int_\Omega \wick{\cP(Y_\infty)}dx$ makes sense as an abuse of notation to denote the duality pairing between a distribution and a constant function (which is smooth on the torus).
Furthermore, we sometimes include the spatial argument of the distribution as a further abuse of notation, e.g.\ $\int_\Omega Y_\infty(x) dx$, not to be confused with a pointwise evaluation (which may not exist).
\end{remark}

We first prove that the discrete potential, suitably extended, converges to the continuum potential in $L^2$. 

\begin{lemma}
\label{lem:wick-convergence}
As $\epsilon \to 0$, we have
  \begin{equs}
  \label{eq:wick-convergence}
    \E\Big[ \Big(\int_{\Omega_{\epsilon}} \wick{\cP(Y^\epsilon_{\infty})} dx - \int_{\Omega} \wick{\cP(Y_{\infty})} dx \Big)^{2} \Big] 
    \to
     0.
  \end{equs}
\end{lemma}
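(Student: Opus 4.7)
The plan is a direct second moment computation, exploiting that the discrete and continuum GFF are built on the \emph{same} underlying cylindrical Brownian motion $W$. Writing $\cP(r) = \sum_{j=1}^{N} a_j r^j$ and applying Minkowski's inequality in $L^{2}(\P)$, it suffices to show that for each $j \in \{1,\dots,N\}$,
\begin{equs}
\E\Big[\big( A_j^\epsilon - A_j \big)^{2}\Big] \to 0, \qquad A_j^\epsilon := \int_{\Omega_\epsilon} \wick{(Y_\infty^\epsilon)^j}_\epsilon \, dx, \qquad A_j := \int_\Omega \wick{Y_\infty^j}\, dx.
\end{equs}
The case $j=1$ is immediate: by the Pauli--Villars representations \eqref{eq:Y-definition} and \eqref{eq:GFF-fourier-pauli-villars}, both $A_1^\epsilon$ and $A_1$ equal the single stochastic integral $\int_0^\infty (sm^2+1)^{-1} d\hat W_s(0)$ associated to the zero Fourier mode, and hence coincide exactly.

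For $j \geq 2$, I would expand the square and use that $\wick{(Y_\infty^\epsilon(x))^j}_\epsilon$ and $\wick{Y_\infty^j(y)}$ both lie in the $j$-th homogeneous Wiener chaos of $W$. The Wiener isometry then yields
\begin{equs}
\E\big[(A_j^\epsilon - A_j)^{2}\big]
=
j! \Big( \int_{\Omega_\epsilon^{2}} c_\infty^\epsilon(x-y)^j\, dxdy
- 2\int_{\Omega_\epsilon\times \Omega} G^{\epsilon,0}(x,y)^j\, dxdy
+ \int_{\Omega^{2}} G(x-y)^j\, dxdy \Big),
\end{equs}
where $G=(-\Delta+m^{2})^{-1}$ and $G^{\epsilon,0}(x,y) := \E[Y_\infty^\epsilon(x) Y_\infty(y)]$ is the mixed covariance, which admits the explicit Fourier expression $G^{\epsilon,0}(x,y) = \sum_{\xi \in \Omega_\epsilon^*} e^{i\xi\cdot(x-y)} \int_0^\infty \hat q_s^\epsilon(\xi)\hat q_s^0(\xi)\, ds$ by the joint Pauli--Villars construction.

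The remaining task is to show that each of these three integrals converges to $\int_{\Omega^2} G(x-y)^j dxdy$; the signed combination $1+1-2=0$ then closes the argument. For this I would combine the pointwise convergences $c_\infty^\epsilon(x-y) \to G(x-y)$ and $G^{\epsilon,0}(x,y) \to G(x-y)$ for $x \neq y$ (both following from the Fourier multiplier comparison \eqref{eq:fourier-multipliers-laplacians-difference} together with uniform decay of the Fourier tails) with a uniform logarithmic domination $|c_\infty^\epsilon(x-y)| + |G^{\epsilon,0}(x,y)| \lesssim 1 + |\log |x-y||$, extending the on-diagonal bound \eqref{eq:variance-gff} to the full off-diagonal behaviour. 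Since $(1+|\log|\cdot||)^j$ is integrable on $\Omega^2$ for every $j \in \N$, dominated convergence, together with the standard replacement of Riemann sums over $\Omega_\epsilon$ by integrals over $\Omega$ against uniformly bounded continuous integrands, concludes.

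The main technical obstacle is the uniform off-diagonal logarithmic bound on $c_\infty^\epsilon$ and $G^{\epsilon,0}$. This is a classical estimate for the massive lattice Green's function which follows from the Pauli--Villars representation $c_\infty^\epsilon = \int_0^\infty \dot c_s^\epsilon\, ds$ by splitting the time integral at scale $|x-y|^2 \vee \epsilon^2$ and using $\hat q_s^\epsilon(\xi) \lesssim (s(c|\xi|^2+m^2)+1)^{-1}$ from \eqref{eq:q-fourier-multipliers}; the same argument, with $(\hat q_s^\epsilon)^2$ replaced by $\hat q_s^\epsilon \hat q_s^0$, handles $G^{\epsilon,0}$.
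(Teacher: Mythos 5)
Your proposal follows essentially the same route as the paper: reduce to each Wick monomial, compute the second moment via the Wiener chaos orthogonality/Wick's theorem to obtain the three-term combination $\int c_\infty^\epsilon(x-y)^n - 2\int G^{\epsilon,0}(x,y)^n + \int G(x-y)^n$, then pass to the limit by dominated convergence with the logarithmic domination of the covariances. The only presentational difference is that the paper introduces the piecewise constant extension operator $E_\epsilon$ to view all three integrals as living on $\Omega^2$, whereas you keep the discrete and continuum domains separate and invoke Riemann-sum convergence; these are interchangeable, and your separate treatment of the $j=1$ zero mode is a minor simplification that the paper absorbs into the general case.
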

 
\begin{proof}
It is convenient to introduce an alternative extension operator to the trigonometric extension that behaves well under commutation with products.
We choose the piecewise constant extension of a function defined on $\Omega_\epsilon$ to a function defined on $\Omega$ that is piecewise constant on $x + \epsilon(-1/2, 1/2]^2$ for all $x \in \Omega_\epsilon \subseteq \Omega$.
Indeed, for any $f \in X_\epsilon$, define $E_\epsilon f \colon \Omega \rightarrow \R$ for $x \in \Omega$ by
\begin{equs}
\label{eq:v-0-convergence-l2}
E_\epsilon f (x)
=
\sum_{z \in \Omega_\epsilon} f(z) \mathbf{1}_{(-\epsilon/2,\epsilon/2]^2}(x-z).
\end{equs}

We identify $Y^{\epsilon}_{\infty}$ with $E_\epsilon Y^{\epsilon}_{\infty}$ and regard its covariance operator $c^\epsilon=c_\infty^\epsilon$ as an operator acting on such piecewise constant functions.
In the following computation, we abuse notation and evaluate the distribution $Y_{\infty}$ pointwise.
Although such a pointwise evaluation alone is a priori ill-defined, the covariance $E[Y^{\epsilon}_{\infty}(x) Y_{\infty}(y)]$ for $x,y \in \Omega_\epsilon$ yields a well defined expression,
as our computation below shows.
This can be made rigorous by first approximating $Y_{\infty}$ with smooth fields and then removing the approximation. We omit this to lighten the notation.
Thus, for  $x,y \in \Omega$,
  \begin{equs}
    \E[Y^{\epsilon}_{\infty}(x)Y_{\infty}(y)]
    &=
    \E \Big[ \sum_{z \in \Omega_\epsilon} Y^{\epsilon}_{\infty}(z) \mathbf{1}_{(-\epsilon/2,\epsilon/2]^2}(x-z)Y_{\infty}(y) \Big]
    \\
    &=
    \sum_{z \in \Omega_\epsilon} \mathbf{1}_{(-\epsilon/2,\epsilon/2]^2}(x-z) \sum_{k_1 \in \Omega_\epsilon^*, k_2 \in \Omega^*} \E \Big[ e^{ik_1 \cdot z+ik_2 \cdot y} \int_0^\infty \hat{q}_t^\epsilon d\hat W_t(k_1)\int_0^\infty \hat{q}_t^0 d \hat W_t(k_2) \Big]
    \\ 
    &= \sum_{z \in \Omega_\epsilon} \mathbf{1}_{(-\epsilon/2,\epsilon/2]^2}(x-z) \sum_{k\in \Omega_\epsilon^*} e^{ik\cdot (z-y) } \int_0^\infty \hat q_t^\epsilon(k) \hat q_t^0(k) dt 
    \\
    & \to \sum_{k\in \Omega^*} e^{ik\cdot (x-y) } \int_0^\infty | \hat q_t^0(k)|^2 dt = c(x-y),
  \end{equs}
  as $\epsilon \to 0$ and where we recall that $c=c_\infty^0$ is the kernel of $(-\Delta+ m^2 )^{-1}$.
  
To prove \eqref{eq:wick-convergence},  it is sufficient to show that as $\epsilon \to 0$
  \begin{equs}
    \E \Big[\Big(\int_{\Omega_{\epsilon}}\wick{(Y^{\epsilon}_{\infty})^{n}}dx -\int_{\Omega}\wick{Y_{\infty}^{n}} dx \Big)^{2}\Big] 
    \to 
    0.
  \end{equs}
  Note that, by Wick's theorem and an abuse of notation regarding evaluating distributions pointwise as above, we have
  \begin{equs}
  \E[\wick{(Y_\infty^\epsilon)^n(x)} \wick{Y_\infty^n(y)}]
  &=
  \sum_{z_1,\dots,z_n} \prod_{i=1}^n \mathbf{1}_{(-\epsilon/2,\epsilon/2]^2}(x-z_i) \, \E [ \wick{ \prod_{i=1}^n Y_\infty^\epsilon(z_i) } \wick{Y_\infty^n (y)} ]
  \\
  &=
  n!\sum_{z_1,\dots,z_n} \prod_{i=1}^n \mathbf{1}_{(-\epsilon/2,\epsilon/2]^2}(x-z_i)   \, \prod_{i=1}^n \E [   Y_\infty^\epsilon(z_i)  Y_\infty (y)]
  \\
  &=
  n!\E[ (Y_\infty^\epsilon)(x) Y_\infty(y) ]^n.
  \end{equs}
  Hence, by expanding, we have that the expression on the left-hand side of \eqref{eq:v-0-convergence-l2} is equal to
  \begin{equs}
    &\int_{\Omega_{\epsilon}}\int_{\Omega_{\epsilon}}\E[ \wick{(Y^{\epsilon}_{\infty})^{n} (x)} \wick{(Y^{\epsilon}_{\infty})^{n}(y)}] dx dy 
    + \int_{\Omega} \int_{\Omega} \E[  \wick{(Y_{\infty})^{n}(x)} \wick{(Y_{\infty})^{n}(y)}] dxdy 
    \\
     &-2\int_{\Omega}\int_{\Omega_{\epsilon}} \E [\wick{(Y_{\infty})^{n}(x)} \wick{(Y^{\epsilon}_{\infty})^{n}(y)}] dx dy 
     \\
    = \,&\, n!\int_{\Omega_{\epsilon}} \int_{\Omega_{\epsilon}} (c^{\epsilon}(x-y))^{n} dx dy + n!\int_{\Omega} \int_{\Omega} c^n(x-y) dx dy - 2n! \int_{\Omega}\int_{\Omega}  \E[Y^{\epsilon}_{\infty}(x) Y_{\infty}(y)]^n dxdy,
  \end{equs}
  which converges to $0$ by dominated convergence.
  \end{proof}
  
We need the following exponential integrability lemma, which follows from a by-now standard argument due to Nelson, see \cite[Chapter 9.6]{MR887102}.
Note that one may also prove this using the Bou\'e-Dupuis formula and estimates similar to those in Section \ref{ssec:l2-estimates}, see Remark \ref{rem:lp-integrability}.
The convergence statement below then follows by Vitali's theorem as stated in \cite[Theorem 4.5.4]{Bogachev2007MeasureTheory} and the convergence \eqref{eq:wick-convergence}.

\begin{lemma}
\label{lem:renormalised-potential-convergence}
    For any $0\leq p<\infty$, we have 
  \begin{equs}
  \label{eq:renormalised-potential-exponential-moments}
    \sup_{\epsilon>0}\E\Big[\exp\Big(-p\int_{\Omega_{\epsilon}}\wick{\cP(Y_\infty^\epsilon)} dx\Big)\Big]
    <
    \infty.
  \end{equs}
In particular, as $\epsilon \to \infty$
  \begin{equs}
  \label{eq:v-infty-convergence}
  \E\Big[\exp\Big(-\int_{\Omega_{\epsilon}}\wick{\cP(Y_{\infty}^\epsilon)} dx\Big)\Big] 
  \to 
  \E\Big[\exp\Big(-\int_{\Omega}\wick{\cP(Y_{\infty})} dx\Big)\Big].
\end{equs}
\end{lemma}

Combining Lemma \ref{lem:wick-convergence} and Lemma \ref{lem:renormalised-potential-convergence}, we can now give a proof of Proposition \ref{prop:weak-convergence-nu-t}.
\begin{proof}[Proof of Proposition \ref{prop:weak-convergence-nu-t}]
Recall from \eqref{eq:ren-measure} that the renormalised measure $\nu_t^{\cP_\epsilon}$ is defined by
\begin{equs}
    \E_{\nu_t^{\cP_\epsilon}}[F]
    =
    { e^{v_\infty^\epsilon(0)}  \EE_{c_\infty^\epsilon - c_t^\epsilon} [F(\zeta) e^{-v_t^\epsilon(\zeta)}  ] }
    =
      { e^{v_\infty^\epsilon(0)} } \E[F(Y^{\epsilon}_{\infty}-Y^{\epsilon}_{t})   e^{-v^{\epsilon}_{t}(Y^{\epsilon}_{\infty}-Y^{\epsilon}_{t})}],
\end{equs}
where $F\colon X_\epsilon \to \R$ is  bounded and continuous, and $v^\epsilon_t$ is the renormalised potential.
By the definition of the pushforward measure and the renormalised potential we obtain that, for $F\colon H^\alpha(\Omega)  \to \R$, bounded and continuous,
\begin{equs}
\E_{(I_{\epsilon})_*\nu^{\cP_\epsilon}_{t}}[F]
&= \E_{\nu_t^{\cP_\epsilon}} [F \circ I_\epsilon] 
=  e^{v_\infty^\epsilon (0)} \E [F \big( I_\epsilon (Y_\infty^\epsilon - Y_t^\epsilon) \big)  e^{-v_t^\epsilon(Y_\infty^\epsilon - Y_t^\epsilon)} ] =  
\\
&= { e^{v_\infty^\epsilon (0)} } \E\Big[ F\big(I_{\epsilon}(Y_\infty^\epsilon -Y_t^\epsilon)\big) 
\E \big[e^{- v_0^\epsilon(Y_\infty^\epsilon - Y_t^\epsilon + Y_t^\epsilon) } \bigm| \cF^t \big] \Big]
\\
&= e^{v_\infty^\epsilon (0)}  \E\Big[F\big(I_\epsilon(Y_\infty^\epsilon - Y_t^\epsilon)\big) e^{-v_0^\epsilon (Y_\infty^\epsilon)} \Big].
\end{equs}
Now, we have by \eqref{eq:gff-limit-eps-to-positive} that $I_{\epsilon}(Y^{\epsilon}_{\infty}-Y^{\epsilon}_{t})$ converges to $Y_{\infty}-Y_{t}$ in $L^2$ with respect to the norm of $H^\alpha(\Omega)$ for any $\alpha <1$.
Moreover, we have by \eqref{eq:wick-convergence} that $v_0^\epsilon (Y_\infty^\epsilon) \to v_0^0(Y_\infty)$ in $L^2$.
Take any subsequence, which we continue to denote as $\epsilon$. Then, there is a further subsequence $(\epsilon_k)_k$, along which we have 
\begin{equation}
\label{eq:vitali-convergence-as}
F\big(I_{\epsilon_k}(Y_\infty^{\epsilon_k} - Y_t^{\epsilon_k})\big) e^{-v_0^{\epsilon_k} (Y_\infty^{\epsilon_k})}
\to
F( Y_\infty - Y_t) e^{-v_0^0 (Y_\infty)}
\end{equation}
almost surely, where we also used that $F$ is continuous with respect to the norm on $H^\alpha$.
Since $F$ is bounded, we have by \eqref{eq:renormalised-potential-exponential-moments} that 
\begin{equation}
\Big (F\big(I_\epsilon(Y_\infty^{\epsilon} - Y_t^{\epsilon})\big) e^{-v_0^{\epsilon} (Y_\infty^{\epsilon})} \Big)_\epsilon
\end{equation}
is uniformly integrable. Hence, by Vitali's theorem, the convergence in \eqref{eq:vitali-convergence-as} holds in $L^1$, i.e.\
\begin{equation}
\label{eq:vitali-convergence-l1}
\E \Big [F\big(I_{\epsilon_k}(Y_\infty^{\epsilon_k} - Y_t^{\epsilon_k})\big) e^{-v_0^{\epsilon_k} (Y_\infty^{\epsilon_k})} \Big]
\to
\E\Big[ F( Y_\infty - Y_t) e^{-v_0^0 (Y_\infty)} \Big].
\end{equation}
In summary, we have shown that for every subsequence of $\epsilon$ there is a further subsequence $(\epsilon_k)_k$ such that \eqref{eq:vitali-convergence-l1} holds,
thus showing full convergence of \eqref{eq:vitali-convergence-l1}.

For the case $t=0$ we follow the same arguments as for $t>0$, but now we take $F\colon H^\alpha (\Omega) \to \R$ for $\alpha <0$ and use \eqref{eq:gff-limit-eps-to-0}. 
\end{proof}

\section{Convergence in law for the maximum of $\cP(\phi)_2$}
\label{sec:maximum}

In this section we use the results on the difference field $\Phi^\Delta$ and prove that the maximum of the $\cP(\phi)_2$ field converges in distribution to a randomly shifted Gumbel distribution.
The analogous result was recently established for the sine-Gordon field in \cite[Section 4]{MR4399156}.
The main difficulty in this reference is to deal with the non-Gaussian and non-independent term $\Phi_0^\Delta$, which requires generalising and extending several key results of \cite{MR3433630}. 
In the present case the combination of the Polchinski approach and the Bou\'e-Dupuis variational approach produce a similar situation with the essential difference to the sine-Gordon case being different regularity estimates for the difference field.
Thus, the main goal in this section is to argue that all results in \cite[Section 4]{MR4399156} also hold under the modified assumptions on $\Phi^\Delta$.

From now on, when no confusion can arise, we will drop $\epsilon$ from the notation. Moreover, to ease notation,
we will use the notation $\| \varphi \|_{L^\infty(\Omega_\epsilon)} \equiv \| \varphi \|_{\infty}$ for fields $\varphi \colon \Omega_\epsilon \to \R$.
Recall that by Theorem \ref{thm:coupling-pphi-to-gff-eps} we have that
\begin{equation}
\label{eq:scale-coupling-max}
\Phi_0^{\cP} = \Phi_0^\GFF + \Phi_0^\Delta = \Phi_0^\GFF - \Phi_s^\GFF + \Phi_s^\cP +  R_s,
\end{equation}
where $R_s = \Phi_0^\Delta -\Phi_s^\Delta$ satisfies
$\sup_{\epsilon>0} \E [\|R_s\|_\infty^{2/L} ] \to 0$ as $s\to 0$ by the Sobolev embedding and \eqref{eq:phi-delta-to-0}.
In the analysis of the maximum of $\Phi_0^\cP$  we also need the following continuity result for the field $\Phi^\Delta$.

\begin{lemma}
Let $\alpha \in (0,1)$. Then
\begin{equation}
\label{eq:phi-delta-hoelder}
\sup_{\epsilon>0} \sup_{t\geq 0}\E \big[\| \Phi_t^{\Delta} \|^{2/L}_{C^{\alpha}(\Omega)} \big] < \infty.
\end{equation}
\end{lemma}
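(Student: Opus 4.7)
The plan is to deduce this bound as a direct consequence of the Sobolev fractional moment estimate \eqref{eq:phi-delta-1-2} from Theorem \ref{thm:coupling-pphi-to-gff-eps} together with the Sobolev--H\"older embedding recorded in Proposition \ref{prop:hoelder-sobolev}. Given $\alpha \in (0,1)$, I would first choose an intermediate Sobolev regularity $s \in (1+\alpha,\, 2)$, which is possible precisely because $\alpha < 1$. Since $d=2$, this $s$ satisfies $s-d/2 \geq \alpha$, so Proposition \ref{prop:hoelder-sobolev} yields the continuum embedding
\[
  \|f\|_{C^\alpha(\Omega)} \lesssim_{\alpha,s} \|f\|_{H^s(\Omega)}.
\]

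Next I would transfer this bound to the lattice. By the very definition $\|\Phi\|_{C^\alpha(\Omega_\epsilon)} = \|I_\epsilon \Phi\|_{C^\alpha(\Omega)}$ and the isometry property $\|\Phi\|_{H^s(\Omega_\epsilon)} = \|I_\epsilon \Phi\|_{H^s(\Omega)}$ of the trigonometric embedding from Section \ref{ssec:fourier}, applying the above inequality to $I_\epsilon \Phi_t^{\Delta_\epsilon}$ gives
\[
  \|\Phi_t^{\Delta_\epsilon}\|_{C^\alpha(\Omega_\epsilon)} \lesssim_{\alpha,s} \|\Phi_t^{\Delta_\epsilon}\|_{H^s(\Omega_\epsilon)}
\]
uniformly in $\epsilon > 0$. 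Raising to the $2/L$-th power, taking expectations, and invoking \eqref{eq:phi-delta-1-2} at the admissible parameter $s < 2$ then produces the claimed uniform bound.

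There is no substantive obstacle: the real work is already contained in the Sobolev regularity estimates for the difference field established in Theorem \ref{thm:coupling-pphi-to-gff-eps}, and the present lemma only repackages those estimates into a H\"older (rather than Sobolev) form. This form is what is needed in the sequel, because the multi-scale decomposition \eqref{eq:scale-coupling-max} used in the analysis of the maximum requires pointwise (and indeed H\"older-continuous) control of the remainder $R_s = \Phi_0^\Delta - \Phi_t^\Delta$ rather than a purely $L^\infty$ bound.
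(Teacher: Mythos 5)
Your argument is correct and matches the paper's own proof, which likewise deduces the bound from the Sobolev--H\"older embedding of Proposition \ref{prop:hoelder-sobolev} combined with the fractional moment estimate \eqref{eq:phi-delta-1-2}; the choice of intermediate exponent $s\in[1+\alpha,2)$ and the passage through the isometric trigonometric extension are exactly the intended steps.
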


\begin{proof}
The statement follows from the Sobolev-H\"older embedding in Proposition \ref{prop:hoelder-sobolev} and \eqref{eq:phi-delta-1-2}.
\end{proof}

To express convergence in distribution we will use the L\'evy distance $d$ on the set of probability measures on $\R$, which is a metric for the topology of weak convergence.
It is defined for any two probability measures $\nu_1,\nu_2$ on $\R$ by
\begin{equation} \label{e:Levy}
d(\nu_1,\nu_2)= \min \{\kappa >0 \colon \nu_1 (B) \leq \nu_2(B^\kappa)+\kappa \text{~for all open sets~} B \},
\end{equation}
where $B^\kappa=\{y\in \R \colon \dist (y,B) < \kappa\}$.
We will use the convention that when a random variable appears in the argument of $d$, we refer to its distribution on $\R$.
Note that if two random variables $X$ and $Y$ can be coupled with $|X-Y|\leq \kappa$ with probability $1-\kappa$ then $d(X,Y) \leq \kappa$.

\subsection{Reduction to an independent decomposition}
The first important step is the introduction of a scale cut-off $s>0$ to obtain an independent decomposition from \eqref{eq:scale-coupling-max}. More precisely, we write
\begin{equation}
\label{eq:approximate-independent-decomposition}
\Phi_0^{\cP} = \tPhisP +  R_s, \qquad \tPhisP =  (\Phi_0^\GFF - \Phi_s^\GFF) + \Phi_s^\cP
\end{equation}
and argue that we may from now on focus on the auxiliary field $\tPhisP$. The following statement plays the same role as Lemma 4.1 in \cite{MR4399156}.

\begin{lemma}[Analog to {\cite[Lemma 4.1]{MR4399156}}]
\label{lem:reduction-to-phi-tilde-phi4}
  Assume that the limiting law $\tilde \mu_s$ of $\max_{\Omega_\epsilon} \sqrtpi \tPhisP -\mathfrak{m}_\epsilon$ as $\epsilon\to 0$ exists for every $s>0$,
  and that there are positive random variables $\ZDM_s$ (on the above common probability space) such that
  \begin{equation}   
  \label{eq:mu-s-gumbel}
    \tilde \mu_s((-\infty,x])=\E[e^{-\alpha^* \ZDM_s e^{-\scaling x}}]
  \end{equation}
  for some constant $\alpha^*>0$.
  Then the law of $\max_{\Omega_\epsilon} \sqrtpi \Phi_0^\cP - \mathfrak{m}_\epsilon$ converges weakly to
  some probability measure $\mu_0$ as $\epsilon \to 0$ and $\tilde \mu_s \rightharpoonup \mu_0$ weakly as $s\to 0$.
  Moreover,  there is a positive random variable
  $\ZDM^\cP$ such that
  \begin{equation}  
    \mu_0((-\infty,x])=\E[e^{-\alpha^* \ZDM^\cP e^{-\scaling x}}].
  \end{equation}
\end{lemma}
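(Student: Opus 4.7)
The plan is to exploit the Lipschitz property of the supremum with respect to the uniform norm together with the quantitative bound
\[
\sup_{\epsilon>0}\E\big[\|R_s\|_\infty^{2/L}\big]\to 0 \quad \text{as } s\to 0,
\]
which follows from \eqref{eq:phi-delta-to-0} via Sobolev embedding (choosing $\alpha\in(1,2)$ so that $H^\alpha(\Omega_\epsilon)\hookrightarrow L^\infty(\Omega_\epsilon)$ uniformly in $\epsilon$). First I would observe that, for any two fields $f,g$ on $\Omega_\epsilon$, $|\max_{\Omega_\epsilon}f-\max_{\Omega_\epsilon}g|\leq \|f-g\|_\infty$; applied to \eqref{eq:approximate-independent-decomposition}, this gives
\[
\big|\max_{\Omega_\epsilon}\sqrtpi\Phi_0^\cP - m_\epsilon - (\max_{\Omega_\epsilon}\sqrtpi\tPhisP-m_\epsilon)\big| \leq \scaling\|R_s\|_\infty.
\]
Combined with Markov's inequality and the display above, this yields
\[
\sup_{\epsilon>0} d\big(\max_{\Omega_\epsilon}\sqrtpi\Phi_0^\cP-m_\epsilon, \ \max_{\Omega_\epsilon}\sqrtpi\tPhisP - m_\epsilon\big) \xrightarrow[s\to 0]{} 0,
\]
where $d$ denotes the L\'evy distance \eqref{e:Levy}.

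Given this uniform-in-$\epsilon$ control, the first assertion follows from a standard Cauchy-in-L\'evy-distance argument: for any $s,s'>0$, the triangle inequality combined with the hypothesis that $\max_{\Omega_\epsilon}\scaling\tPhisP-m_\epsilon\Rightarrow\tilde\mu_s$ gives $d(\tilde\mu_s,\tilde\mu_{s'})\leq \eta(s)+\eta(s')$ where $\eta(s)\to 0$. Hence $(\tilde\mu_s)_{s>0}$ is Cauchy in the L\'evy metric and converges to some probability measure $\mu_0$; by the triangle inequality once more, the laws of $\max_{\Omega_\epsilon}\scaling\Phi_0^\cP-m_\epsilon$ converge to the same $\mu_0$ as $\epsilon\to 0$.

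It remains to identify $\mu_0$ as a randomly shifted Gumbel. For this I would use \eqref{eq:mu-s-gumbel} and rewrite $\tilde\mu_s((-\infty,x])=\psi_{\ZDM_s}(\alpha^* e^{-\scaling x})$, where $\psi_Z(\lambda)\coloneqq \E[e^{-\lambda Z}]$ is the Laplace transform of a positive random variable $Z$. The weak convergence $\tilde\mu_s\rightharpoonup \mu_0$ established above means that for every continuity point $x$ of the limit, $\psi_{\ZDM_s}(\alpha^* e^{-\scaling x})$ converges; letting $x$ range over $\R$ covers every $\lambda>0$, so the Laplace transforms $\psi_{\ZDM_s}$ converge pointwise on $(0,\infty)$ to a limit $\psi_\infty$ that is continuous at $0$ (continuity at $0$ follows because $\mu_0$ is a probability measure, equivalently $\lim_{x\to\infty}\mu_0((-\infty,x])=1$). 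By the continuity theorem for Laplace transforms, $\ZDM_s$ converges in distribution to some positive random variable $\ZDM^\cP$ with $\psi_\infty=\psi_{\ZDM^\cP}$, yielding
\[
\mu_0((-\infty,x]) = \E\big[e^{-\alpha^* \ZDM^\cP e^{-\scaling x}}\big].
\]

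The main obstacle I anticipate is purely bookkeeping: one has to ensure that the $L^\infty$-control on $R_s$ genuinely comes with a rate uniform in $\epsilon$, which is the point of choosing $\alpha\in(1,2)$ in \eqref{eq:phi-delta-to-0} and invoking the Sobolev embedding from Section \ref{ssec:sobolev}. No non-Gaussian input enters beyond the regularity of the difference field, so the argument is essentially the same as in \cite[Lemma~4.1]{BH2020Max}.
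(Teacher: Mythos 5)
Your proof is correct and reaches the same conclusion, but it takes a genuinely different and, in two places, more self-contained route than the paper's proof. Both arguments hinge on the same quantitative input, namely that the pathwise bound $|\max_{\Omega_\epsilon}\Phi_0^\cP - \max_{\Omega_\epsilon}\tPhisP| \leq \|R_s\|_\infty$ together with Markov's inequality gives
\[
\sup_{\epsilon>0} d\big(\max_{\Omega_\epsilon}\sqrtpi\Phi_0^\cP-m_\epsilon,\ \max_{\Omega_\epsilon}\sqrtpi\tPhisP-m_\epsilon\big)
\lesssim \Big(\sup_{\epsilon>0}\E\big[\|R_s\|_\infty^{2/L}\big]\Big)^{L/(L+2)} \xrightarrow[s\to0]{}0,
\]
which is precisely \eqref{eq:levy-distance-mu0-mus}. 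The differences are structural. First, the paper establishes tightness of the laws of $\max_{\Omega_\epsilon}\Phi_0^\cP-m_\epsilon$ by invoking tightness of the GFF maximum from the literature and adding the bounded difference field; it then extracts a subsequential limit $\mu_0$ and uses the Lévy-distance bound to show it is unique. You instead run a Cauchy-in-Lévy-metric argument: the uniform-in-$\epsilon$ bound and the triangle inequality give $d(\tilde\mu_s,\tilde\mu_{s'})\leq\eta(s)+\eta(s')$, so $(\tilde\mu_s)_{s>0}$ converges by completeness, and another application of the triangle inequality pins down the limit of the $\cP(\phi)_2$ maxima. This avoids the external tightness input. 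Second, and more substantially, to deduce that $\mu_0$ has the Gumbel-with-random-shift form the paper proves tightness of $(\ZDM_s)_s$ by a contradiction argument that compares against the explicit limiting law $\mu_0^{\GFF}$ of the GFF maximum; you bypass this entirely by observing that the Laplace transforms $\psi_{\ZDM_s}(\lambda)=\E[e^{-\lambda\ZDM_s}]$ converge pointwise on $(0,\infty)$, and that continuity of the limit at $\lambda=0^+$ with value $1$ is automatic because $\mu_0$ is a probability measure (so $\mu_0((-\infty,x])\to 1$ as $x\to\infty$), at which point the continuity theorem for Laplace transforms gives weak convergence $\ZDM_s\Rightarrow\ZDM^\cP$ directly. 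This is a cleaner and more economical derivation than the paper's. One small point you should make explicit: weak convergence of $\tilde\mu_s$ a priori only gives convergence of $\psi_{\ZDM_s}(\lambda)$ for $\lambda$ corresponding to continuity points of the limiting CDF, hence on a dense subset of $(0,\infty)$; since Laplace transforms of nonnegative random variables are convex, monotone, and uniformly bounded, they are locally equi-Lipschitz on compacts of $(0,\infty)$, so convergence on a dense set upgrades to convergence everywhere on $(0,\infty)$. With that filled in, the argument is complete.
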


\begin{proof}
We follow identical steps as in the proof of Lemma 4.1 of \cite{MR4399156}. We first argue that the sequence $(\Max \Phi_0^\cP - \mathfrak{m}_\epsilon)_\epsilon$ is tight.
Indeed, since
\begin{equation}
\Max \Phi_0^\cP - \mathfrak{m}_\epsilon = \Max \Phi_0^\GFF -\mathfrak{m}_\epsilon + O( \| \Phi_0^\Delta \|_\infty)
\end{equation}
and since $(\Max \Phi_0^\GFF - \mathfrak{m}_\epsilon)_\epsilon$ is tight by \cite{MR2846636}, 
we see that $(\Max \Phi_0^\cP - \mathfrak{m}_\epsilon)_\epsilon$ differs from a tight sequence by a sequence $Y_\epsilon$ with $\E[|Y_\epsilon|^{2/L}] <\infty$.
Elementary arguments such as Markov's inequality then imply that also sequence $(\Max \Phi_0^\cP - \mathfrak{m}_\epsilon)_\epsilon$ is tight.

Thus, there is a probability distribution $\mu_0$ such that the law of $\Max \Phi^\cP - \mathfrak{m}_\epsilon$ converges to $\mu_0$ weakly along a subsequence $(\epsilon_k)_k$.
Considering the L\'evy distance to $\tilde \mu_s$, we have
\begin{align}
d(\mu_0, \tilde \mu_s) & \leq 
\limsup_{\epsilon = \epsilon_k \to 0} [d(\Max \Phi_0^\cP - \mathfrak{m}_\epsilon, \Max \tPhisP - \mathfrak{m}_\epsilon ) + d(\Max \tPhisP - \mathfrak{m}_\epsilon, \tilde \mu_s) ]
\nnb
&=\limsup_{\epsilon=\epsilon_k \to 0} d(\Max \Phi_0^\cP - \mathfrak{m}_\epsilon, \Max \tPhisP - \mathfrak{m}_\epsilon).
\end{align}
The last display can be estimated as follows: for any open set $B \subseteq \R$ we have
\begin{align}
\P(\Max \Phi_0^\cP - \mathfrak{m}_\epsilon \in B)
& \leq
\P(\Max \tilde \Phi_s - \mathfrak{m}_\epsilon \in B \pm \|R_s\|_\infty) \nnb
&\leq \P (\Max \tPhisP - \mathfrak{m}_\epsilon \in B^\kappa ) + \P(\Max \|R_s\|_\infty \geq \kappa).
\end{align}
Markov's inequality implies that
\begin{equation}
\P(\|R_s\|_\infty \geq \kappa ) \leq \frac{\E[\|R_s\|_\infty^{2/L} ] }{\kappa^{2/L}},
\end{equation}
and thus, choosing
\begin{equation}
\kappa = \frac{\E[\|R_s\|_\infty^{2/L} ] }{\kappa^{2/L}} \iff \kappa = \big(\E[ \|R_s\|_\infty^{2/L}]\big)^{L/(L+2)},
\end{equation}
we get by the definition of the Levy distance
\begin{equation}
\label{eq:levy-distance-mu0-mus}
d(\mu_0, \tilde \mu_s) \lesssim \big(\sup_{\epsilon >0} \E[\|R_s\|_{\infty}^{2/L} ] \big)^{L/(L+2)}.
\end{equation}
Taking $s\to 0$ shows that  the subsequential limit $\mu_0$ is unique, as it is the unique weak limit of $\tilde \mu_s$.
Thus, it follows that $\Max \Phi_0^\cP - \mathfrak{m}_\epsilon \to \mu_0$ in distribution as $\epsilon \to 0$.
Moreover, by \eqref{eq:levy-distance-mu0-mus} we have $\tilde \mu_s \rightharpoonup \mu_0$ weakly, and thus
\begin{equation}
\label{eq:mus-to-mu0-distribution}
\tilde \mu_s((-\infty, x]) \to \mu_0((-\infty, x])
\end{equation}
for the distribution functions. 

It remains to show that $(\ZDM_s)_s$ is tight. Using \eqref{eq:approximate-independent-decomposition} we get for any $C>0$
\begin{equs}
\begin{split}
\P(\Max \tPhisP -\mathfrak{m}_\epsilon \leq x) & 
\geq \P(\Max \Phi_0^\GFF - \mathfrak{m}_\epsilon  \leq x - \|\Phi_s^\Delta \|_\infty) \nnb
&\geq \P(\Max \Phi_0^\GFF - \mathfrak{m}_\epsilon  \leq x - C, \|\Phi_s^\Delta \|_\infty \leq C) \nnb
\label{eq:tightness-zs-probabilities}
& = \P( \Max \Phi_0^\GFF - \mathfrak{m}_\epsilon \leq x- C)
- 
\P(\Max \Phi_0^\GFF - \mathfrak{m}_\epsilon \leq x-C, \| \Phi_s^\Delta\|_\infty > C).
\end{split}
\end{equs}
Now, for a given $\kappa >0$ we use Markov's inequality and choose $C$ such that
\begin{equation}
\sup_{s\geq 0} \P(\|\Phi_s^\Delta\|_\infty > C) \leq \kappa/2.
\end{equation}
Then we obtain from \eqref{eq:tightness-zs-probabilities}
\begin{equation}
\label{eq:tightness-zs-inequality-probabilities}
-\kappa/2 + \P(\Max \Phi_0^\GFF - \mathfrak{m}_\epsilon \leq x-C)   \leq \P(\Max \tPhisP -\mathfrak{m}_\epsilon \leq x).
\end{equation}
Let $\mu_0^\GFF$ be the limiting law of the centred maximum of the discrete Gaussian free field which exists by \cite{MR3433630}. Then taking $\epsilon \to 0$ in \eqref{eq:tightness-zs-inequality-probabilities} together with the assumption \eqref{eq:mu-s-gumbel} yields
\begin{equation}
\label{eq:tightness-inequality-exp}
-\kappa/2 + \mu_0^\GFF((-\infty, x - C] ) \leq   \E[e^{-\alpha^* \ZDM_s e^{-\scaling x}}].
\end{equation}
From here the argument is analogous as in the proof of \cite[Lemma 4.1]{MR4399156}:
assume that the sequence $(\ZDM_s)_s$ is not tight.
Then we have
\begin{equation}
\exists \kappa >0 \colon \forall M>0 \colon \exists s_M \colon \P(\ZDM_{s_M} >M) >\kappa.
\end{equation}
It follows that 
\begin{equation}
\E[e^{-\alpha^* \ZDM_{s_M} e^{-\scaling x}}]
\leq e^{-\alpha^* M e^{-\scaling x}} + \P(\ZDM_{s_M} \leq M) \leq  e^{-\alpha^* M e^{-\scaling x}} + (1-\kappa).
\end{equation}
Sending $M\to \infty$, \eqref{eq:tightness-inequality-exp} implies that 
\begin{equation}
- \kappa/2 + \mu_0^\GFF((-\infty, x-C]) \leq 1-\kappa,
\end{equation}
which is a contradiction when sending $x\to \infty$.
\end{proof}

\subsection{Approximation of small scale field}
\label{sec:approx-small-scales}

Thanks to Lemma \ref{lem:reduction-to-phi-tilde-phi4} we may focus from now on the centred maximum of $\tPhisP$,
which has a Gaussian small scale field $\Phi_0^\GFF - \Phi_s^\GFF$ and a non-Gaussian but independent large scale field $\Phi_s^\cP$. 
Similar to \cite[Section 4.2]{MR4399156} we replace the small scale field by a collection of massless discrete Gaussian free fields, so that the results in \cite{MR3433630} apply.
The only difference is the regularisation of the Gaussian free field covariance:
in \cite{MR4399156} the heat-kernel regularisation is used, i.e.\
\begin{equation}
\frac{d}{dt} \tilde c_t^\epsilon = e^{t(-\Delta^\epsilon + m^2)/2}, \qquad \tilde c_t^\epsilon = \int_0^t \frac{d}{ds}\tilde c_s^\epsilon \, ds,
\end{equation}
while here, we use the Pauli-Villars regularisation \eqref{eq:pauli-villars}, which implies
\begin{equation}
\label{eq:covariance-small-scales}
\cov(\Phi_0^\GFF - \Phi_s^\GFF) = (-\Delta + m^2 + 1/s)^{-1}.
\end{equation}
Therefore, we do not need the additional decomposition (4.17) in \cite{MR4399156} involving the function $g_s$. The following paragraph is completely analogous to \cite[Section 4.2]{MR4399156}, but we include it here to set up the notation and improve readability.

We introduce a macroscopic subdivision of the torus $\Omega$ as follows:
let $\Gamma$ be the union of horizontal and vertical lines intersecting at the vertices $\frac{1}{K}\Z^2 \cap \Omega$ which subdivides $\Omega$
into boxes $V_i \subset \Omega$, $i=1,\dots, K^2$ of side length $1/K$.
We use the notation $V_i$ for both the subset of $\Omega$ and the corresponding lattice version as subset of $\Omega_\epsilon$. 

Let $\Delta_\Gamma$ be the Laplacian on $\Omega$ with Dirichlet boundary conditions on $\Gamma$,
and let $\Delta$ be the Laplacian with periodic boundary conditions on $\Omega$.
The domain of $\Delta$ is the space of $1$-periodic functions,
and that of $\Delta_\Gamma$ is the smaller space of $1$-periodic functions vanishing on $\Gamma$.
This implies that $-\Delta_\Gamma \geq -\Delta$ and thus,
\begin{equation}
\label{eq:covariances-quadratic-form-inequality}
(-\Delta+m^2 + 1/s)^{-1} \geq (-\Delta_\Gamma+m^2 + 1/s)^{-1}
\end{equation}
as quadratic form inequalities.

Hence, using \eqref{eq:covariance-small-scales}, we can decompose the small scale Gaussian field $\Phi_0^\GFF- \Phi_s^\GFF$ as
\begin{equation}
  \Phi^{\GFF}_0-\Phi^{\GFF}_s  \stackrel{d}{=} \tilde X_{s,K}^f + \coarsesKGauss,
  \label{e:decomp-gaussian-part}
\end{equation}
where the two fields on the right-hand side are independent Gaussian fields with covariances
\begin{align}
  \label{e:Xf}
  \cov(\tilde X_{s,K}^f) &= (-\Delta_\Gamma + m^2 + 1/s)^{-1}  \\
    \label{e:Xc}
  \cov(\coarsesKGauss) &=  (-\Delta+m^2 + 1/s)^{-1} - (-\Delta_\Gamma+m^2 + 1/s)^{-1}.
\end{align}
Note that for this decomposition, which is analogous to the Gibbs-Markov decomposition of the massless GFF with Dirichlet boundary condition,
the Pauli-Villars decomposition is particularly convenient due to \eqref{eq:covariances-quadratic-form-inequality}.
Using \cite[Lemma 4.2]{MR4399156} in the exact same form, we see that the maximum of $\Phi_0^\GFF- \Phi_s^\GFF$ can be replaced by the maximum of $X_K^f + \coarsesKGauss$, where
\begin{equation}
\cov(X_K^f) = (-\Delta_\Gamma)^{-1}.
\end{equation}
This yields a new auxiliary field denoted $\Phi_s$ with independent decomposition
\begin{equation}
\label{eq:independent-decomposition-phi-s}
\Phi_s = X_K^f + \coarsesKGauss + \Phi_s^\cP,
\end{equation}
which is completely analogous to (4.43) in \cite{MR4399156}, except that there is no field $X_s^h$ for the different choice of the covariance regularisation.
In fact the two fields $X_K^f$ and $\coarsesKGauss$ are exactly the same 
and thus, the covariance estimates for $\coarsesKGauss$ in \cite[Lemma 4.4]{MR4399156} can be used verbatim. 
The only essential difference is that the field $\Phi_s^\cP$ is different, but the following statement establish the same regularity estimates as in \cite[Lemma 4.5]{MR4399156}.

\begin{lemma}
For any $s>0$ and $\epsilon\geq 0$ the fields $\Phi_s^\GFF$ and $\Phi_s^\cP$ are a.s.\ H\"older continuous. Moreover, there is $\alpha \in {(0,1)}$ such that for $\#\in \{\GFF, \cP\}$
\begin{equation}
\label{eq:hoelder-continuity-small-scales}
\sup_{\epsilon>0} \E\Big [  \Max |\Phi_s^\#|  + \max_{x,y \in \Omega_\epsilon} \frac{|\Phi_s^\#(x) - \Phi_s^\# (y)|}{|x-y|^\alpha} \Big] < \infty.
\end{equation}
\end{lemma}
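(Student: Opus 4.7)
The plan is to handle the two cases separately, in each case exploiting that $s > 0$ is fixed so that quantitative smoothing is available. For the Gaussian contribution $\Phi_s^{\GFF_\epsilon}$, the Fourier representation \eqref{eq:GFFepsFourier-pphi} and the identity
\begin{equation*}
\int_s^\infty \hat q_u^\epsilon(k)^2 \, du = \frac{1}{(-\hat\Delta^\epsilon(k)+m^2)\bigl(s(-\hat\Delta^\epsilon(k)+m^2)+1\bigr)}
\end{equation*}
together with the elementary bound $|e^{ik \cdot x} - e^{ik \cdot y}|^2 \lesssim_\alpha (|k| \, |x-y|)^{2\alpha}$ for $\alpha \in [0,1]$ yield, after splitting the resulting $k$-sum at the scale $|k| \simeq s^{-1/2}$ in the spirit of the computations of Section~4,
\begin{equation*}
\E\bigl[\bigl(\Phi_s^{\GFF_\epsilon}(x) - \Phi_s^{\GFF_\epsilon}(y)\bigr)^2\bigr] \lesssim_{s,\alpha} |x-y|^{2\alpha}
\end{equation*}
uniformly in $\epsilon > 0$ for every $\alpha \in (0,1)$.

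Since each increment is Gaussian, hypercontractivity upgrades this to $\E[|\Phi_s^{\GFF_\epsilon}(x) - \Phi_s^{\GFF_\epsilon}(y)|^p]^{1/p} \lesssim_{p,s,\alpha} |x-y|^\alpha$ for every $p \geq 2$. Transferring the estimate to the trigonometric extension $I_\epsilon \Phi_s^{\GFF_\epsilon}$ on $\Omega$, which is a smooth function so that no modification is required, and applying Kolmogorov's continuity theorem on the $d=2$ torus with $p$ chosen so that $p\alpha > 2$, produces a uniform-in-$\epsilon$ bound on $\E[\|\Phi_s^{\GFF_\epsilon}\|_{C^{\alpha'}(\Omega_\epsilon)}]$ for some $\alpha' \in (0,1)$, which is the required estimate for $\# = \GFF$.

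For $\Phi_s^{\cP}$, I would invoke the coupling of Theorem~\ref{thm:coupling-pphi-to-gff-eps} to write $\Phi_s^{\cP_\epsilon} = \Phi_s^{\Delta_\epsilon} + \Phi_s^{\GFF_\epsilon}$ and treat the two pieces separately. The Gaussian piece is already handled. For the difference field, the bound \eqref{eq:phi-delta-h2} gives $\sup_{\epsilon > 0} \E[\|\Phi_s^{\Delta_\epsilon}\|_{H^2}^2] < \infty$, and the Sobolev-H\"older embedding of Proposition~\ref{prop:hoelder-sobolev} with $d=2$ yields $\|\Phi_s^{\Delta_\epsilon}\|_{C^\alpha(\Omega_\epsilon)} \lesssim_\alpha \|\Phi_s^{\Delta_\epsilon}\|_{H^2(\Omega_\epsilon)}$ for any $\alpha \in (0,1)$. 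Cauchy-Schwarz then bounds $\E[\|\Phi_s^{\Delta_\epsilon}\|_{C^\alpha}]$ uniformly in $\epsilon$. Combining with the Gaussian estimate establishes \eqref{eq:hoelder-continuity-small-scales} for $\# = \cP$, and almost-sure H\"older continuity in both cases follows immediately from the finiteness of the expected H\"older norm.

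The main point to monitor is the $\epsilon$-uniformity. This is preserved because the Kolmogorov-Chentsov step is carried out on the continuum extension $I_\epsilon \Phi_s^{\GFF_\epsilon}$ rather than on the discrete field, and because the discrete Sobolev norm coincides with the continuum Sobolev norm of $I_\epsilon \Phi_s^{\Delta_\epsilon}$ by the isometric property of $I_\epsilon$ recorded in Section~\ref{ssec:fourier}. The resulting constants thus depend on $s$ but not on $\epsilon$.
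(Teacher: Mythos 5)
Your proof is correct and follows essentially the same route as the paper's: you identify the $O_s((1+|k|^2)^{-2})$ decay of the Fourier-mode variances of $\Phi_s^{\GFF_\epsilon}$ and deduce Hölder regularity and maximum bounds, while for $\Phi_s^{\cP_\epsilon}$ you apply the coupling and then control $\Phi_s^{\Delta_\epsilon}$ via \eqref{eq:phi-delta-h2} and the Sobolev–Hölder embedding. The only difference is that you carry out the Gaussian step by hand (increment estimate, hypercontractivity, Kolmogorov–Chentsov on the trigonometric extension), whereas the paper cites the corresponding standard results directly.
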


\begin{proof}
Note that the Fourier coefficients of $\Phi_t^\GFF$ satisfy
\begin{equation}
\E \big[ |\hat \Phi_s^\GFF (k)|^2 \big] = O_{s} \Big(\frac{1}{1+|k|^4}\Big).
\end{equation}	
Hence, \eqref{eq:hoelder-continuity-small-scales}  for $\Phi_s^\GFF$ follows from standard results as stated in \cite[Proposition B.2 (i)]{MR3339158} (for H\"older continuity) and \cite[Lemma 3.5]{MR3433630} (for the maximimum). For $\Phi_s^\cP$ the results follow from the properties of the difference term $\Phi_s^\Delta$.
\end{proof}

The only remaining results, where the properties of $\Phi^\Delta$ enter, are Proposition 4.8 and Pro\-po\-si\-tion 4.9 in \cite{MR4399156}.
To state these results we introduce for technical reasons a small neighbourhood of the grid $\Gamma$ as follows.
For $\delta \in (0,1)$, define 
\begin{equation}
\label{e:Omega-delta}
V_i^{\delta}= \{x \in V_i \colon \dist(x, \Gamma) \geq \delta/K \},
\qquad
\Omega^\delta= \bigcup_{i=1}^{K^2} V_i^{\delta},
\qquad
\DiscTorusGrid= \Omega^\delta \cap (\epsilon \Z^2).
\end{equation}

\begin{proposition}[Version of {\cite[Proposition~5.1]{MR3433630}}]
\label{prop:maximiser-not-on-grid-phi4}
Let $\DiscTorusGrid$ be as in \eqref{e:Omega-delta}. Then,
\begin{equation}
\lim_{\delta \to 0}\limsup_{K\to \infty} \limsup_{\epsilon \to 0} \P (\max_{\DiscTorusGrid}\PhisKP\neq \max_{\Omega_\epsilon}\PhisKP)=0.
\end{equation}
\end{proposition}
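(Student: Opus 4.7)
The plan is to adapt the argument of \cite[Proposition~4.8]{BH2020Max}, itself an adaptation of \cite[Proposition~5.1]{MR3433630}, using the independent decomposition \eqref{eq:independent-decomposition-phi-s} of $\PhisKP$ into a fine-scale Gaussian field $\fineGFF$, a coarse-scale Gaussian field $\coarsesKGauss$, and the large-scale $\cP(\phi)_2$ field $\Phi_s^\cP$. The strategy will be to show that in the iterated limit $\epsilon \to 0$, $K \to \infty$, $\delta \to 0$, the supremum of $\PhisKP$ over the boundary strip $\Omega_\epsilon \setminus \DiscTorusGrid$ is strictly smaller than its maximum over $\DiscTorusGrid$ with probability approaching one, which forces the argmax to lie in $\DiscTorusGrid$.

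First I would isolate the fine-scale Gaussian contribution $\fineGFF$. Since its covariance $(-\Delta_\Gamma)^{-1}$ is block-diagonal over the boxes $V_i$ and vanishes on $\Gamma$, the pointwise variance at a point $x$ at distance $d$ from $\Gamma$ is $\tfrac{1}{2\pi}\log(d/\epsilon) + O(1)$, which on the strip is at most $\tfrac{1}{2\pi}\log(1/(K\epsilon)) - \tfrac{1}{2\pi}\log(1/\delta) + O(1)$—smaller than the typical interior variance by $\tfrac{1}{2\pi}\log(1/\delta)$. The classical Gaussian-comparison / modified second-moment argument of \cite[Proposition~5.1]{MR3433630} then yields, for every $C>0$,
\begin{equation}
\lim_{\delta \to 0}\limsup_{K\to \infty}\limsup_{\epsilon \to 0} \P\Big(\max_{\Omega_\epsilon \setminus \DiscTorusGrid} \fineGFF \geq m_\epsilon - C\Big) = 0,
\end{equation}
while the Bramson--Ding--Zeitouni tightness of the centred maximum \cite{MR3433630,MR2846636} ensures $\max_{\DiscTorusGrid} \fineGFF \geq m_\epsilon - c$ with probability at least $1-\kappa$ for some $c=c(\kappa)$ uniform in $\epsilon$ and $K$.

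Next I would absorb the smooth remainder $\coarsesKGauss + \Phi_s^\cP$ via a sup-norm tightness estimate. Both terms are almost surely H\"older continuous with moments uniform in $\epsilon$: the coarse Gaussian field by the covariance estimates in \cite[Lemma~4.4]{BH2020Max} (which carry over to our Pauli--Villars regularisation since \eqref{e:Xc} differs from the heat-kernel version only by a fixed-scale smooth correction), and $\Phi_s^\cP$ by \eqref{eq:hoelder-continuity-small-scales}. Consequently $\|\coarsesKGauss + \Phi_s^\cP\|_\infty$ forms a tight family in $\epsilon$, so for any $\kappa>0$ there exists $M=M(\kappa,K)$ with
\begin{equation}
\sup_{\epsilon>0} \P\big(\|\coarsesKGauss + \Phi_s^\cP\|_\infty > M\big) < \kappa.
\end{equation}
Choosing $C$ in the first paragraph much larger than $M$ and combining via a union bound then gives $\max_{\Omega_\epsilon \setminus \DiscTorusGrid} \PhisKP < \max_{\DiscTorusGrid} \PhisKP$ with probability at least $1-3\kappa$, from which the claim follows upon sending $\kappa \to 0$.

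The main technical obstacle is the Gaussian-comparison step of the first paragraph, but since it concerns only $\fineGFF$—whose distribution is identical to the one treated in \cite[Proposition~4.8]{BH2020Max}—it can be imported verbatim from there. The non-Gaussian contribution from the $\cP(\phi)_2$ interaction enters solely through the sup-norm tightness of $\Phi_s^\cP$, which is a direct consequence of our uniform regularity estimates on the difference field $\Phi^\Delta$. In this sense the extension from sine-Gordon to $\cP(\phi)_2$ is essentially structural: all of the nontrivial Gaussian maximum analysis is identical, and what changes is merely the source of the \emph{a priori} H\"older control on the non-Gaussian part.
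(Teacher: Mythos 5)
The paper's proof of this proposition is extremely short: it simply observes that the argument in \cite[Proposition~4.8]{BH2020Max}, written for the sine-Gordon field, uses only that $\Phi_s^\Delta$ is H\"older continuous and $\|\Phi_s^\Delta\|_\infty$ is bounded (deterministically, in the sine-Gordon case), and then restricts to the high-probability event $E = \{\|\Phi_s^\Delta\|_{C^\alpha(\Omega)} < C\}$ to recover a deterministic bound. Crucially, the paper treats only the non-Gaussian difference field $\Phi_s^\Delta$ as the perturbation; the entire Gaussian part $X_K^f + \coarsesKGauss + \Phi_s^\GFF$ is handled by the log-correlated-field machinery of \cite{MR3433630} and \cite{BH2020Max}.

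Your proposal takes a different decomposition, treating $\coarsesKGauss + \Phi_s^\cP$ jointly as a "smooth remainder" controlled via a sup-norm tightness bound $M = M(\kappa, K)$ and then demanding $C \gg M$ in the Gaussian-comparison step. This is where there is a genuine gap. The coarse Gaussian field $\coarsesKGauss$ has covariance $(-\Delta+m^2+1/s)^{-1} - (-\Delta_\Gamma+m^2+1/s)^{-1}$, which is log-correlated on the range of scales between $1/K$ and $\sqrt{s}$; its maximum over $\Omega$ grows like $\scalinglog \log K$, so your constant $M(\kappa,K)$ diverges as $K \to \infty$. Consequently the $C$ you need in the variance-comparison estimate for $\fineGFF$ also diverges with $K$, but the gain from the boundary strip is only of order $\log(1/\delta)$ \emph{uniformly} in $K$, and in the iterated limit $\lim_{\delta\to 0}\limsup_{K\to\infty}\limsup_{\epsilon\to 0}$ the $K$-limit is taken before $\delta \to 0$. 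So you cannot pick $\delta$ depending on $K$, and the union bound breaks down. The resolution, as in \cite{MR3433630} and \cite{BH2020Max}, is to keep the coarse Gaussian field inside the Gaussian analysis, where what is exploited is not a sup-norm bound but the smoothness of $\coarsesKGauss$ at scale $1/K$ (its fluctuation over the strip of width $\delta/K$ is $O(\delta^\alpha)$ up to H\"older constants, so it essentially cancels between the strip and the nearby interior). Only the $K$-independent field $\Phi_s^\Delta$ should be moved to the conditioning event, since it is the only object whose regularity cannot be controlled by classical Gaussian methods.

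In short, your structural message — ``all the nontrivial Gaussian work is imported, and only the a priori H\"older control on the non-Gaussian part changes'' — is exactly the paper's message, but the dividing line between "Gaussian work" and "perturbation" must be drawn at $\Phi_s^\Delta$, not at $\coarsesKGauss + \Phi_s^\cP$.
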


\begin{proposition}[Version of {\cite[Proposition~5.2]{MR3433630}}]
\label{prop:fine-field-maximisers-phi4}
Let $\PhisKP$ be as in \eqref{eq:independent-decomposition-phi-s}. Let $z_i \in V_i^{\delta}$ be such that 
\begin{equation}
\max_{V_i^{\delta}} \fineGFF=  \fineGFF(z_i)
\end{equation}
and let $\bar z$  be such that 
\begin{equation}
\max_i \PhisKP(z_i) = \PhisKP(\bar z).
\end{equation}
Then for any fixed $\kappa>0$ and small enough $\delta >0$,
\begin{equation}
\lim_{K\to \infty} \limsup_{\epsilon \to 0} \P (\max_{\DiscTorusGrid} \PhisKP \geq \PhisKP(\bar z)+\kappa)=0.
\label{e:Psi-fine-field-maximisers}
\end{equation}
Moreover, there is a function $g\colon \N \to \R_0^+$ with $g(K) \to \infty$ as $K\to \infty$, such that 
\begin{equation}
\lim_{K\to \infty} \limsup_{\epsilon \to 0} \P ( \sqrtpi \fineGFF(\bar z)\leq \mathfrak{m}_{\epsilon K}+g(K))=0.
\label{e:GFF-fine-field-maximisers}
\end{equation}
\end{proposition}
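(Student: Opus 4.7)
The plan is to adapt the scheme of Propositions 5.1--5.2 in \cite{MR3433630}, as implemented in \cite[Proposition~4.9]{BH2020Max}, exploiting the independent decomposition $\PhisKP = \fineGFF + \coarsesKGauss + \Phi_s^\cP$ from \eqref{eq:independent-decomposition-phi-s}. The key feature is that $\fineGFF$ decouples into $K^2$ independent massless DGFFs on the cells $V_i$ and carries all the microscopic fluctuations, while $\coarsesKGauss + \Phi_s^\cP$ is H\"older-$\alpha$ continuous with seminorm bounded in expectation uniformly in $\epsilon$ by \eqref{eq:hoelder-continuity-small-scales}, and hence oscillates by only $O_\P(K^{-\alpha})$ across any cell of side $1/K$.

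For \eqref{e:Psi-fine-field-maximisers}, fix $y_i \in V_i^\delta$ with $\PhisKP(y_i) = \max_{V_i^\delta}\PhisKP$. Since $z_i$ maximises $\fineGFF$ on $V_i^\delta$, the difference $\fineGFF(y_i) - \fineGFF(z_i)$ is non-positive and thus
\[
  \PhisKP(y_i) - \PhisKP(z_i) \leq |\coarsesKGauss(y_i)-\coarsesKGauss(z_i)| + |\Phi_s^\cP(y_i)-\Phi_s^\cP(z_i)| \lesssim K^{-\alpha}\bigl(\|\coarsesKGauss\|_{C^\alpha}+\|\Phi_s^\cP\|_{C^\alpha}\bigr).
\]
Maximising over $i$ yields $\max_{\DiscTorusGrid}\PhisKP \leq \PhisKP(\bar z) + O_\P(K^{-\alpha})$, so Markov's inequality applied to the H\"older seminorms (uniformly controlled in $\epsilon$ by \eqref{eq:hoelder-continuity-small-scales}) closes the argument after sending first $\epsilon \to 0$ and then $K \to \infty$.

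For \eqref{e:GFF-fine-field-maximisers}, write $\eta_i = \coarsesKGauss(z_i)+\Phi_s^\cP(z_i)$, let $\bar\imath$ be an index achieving $\tilde M_K := \max_i \fineGFF(z_i)$, and note that the maximising property of $\bar z$ forces $\fineGFF(z_{\bar\imath})+\eta_{\bar\imath} \leq \fineGFF(\bar z) + \eta_{\bar z}$, so
\[
  \fineGFF(\bar z) \geq \tilde M_K - 2\bigl(\|\coarsesKGauss\|_\infty + \|\Phi_s^\cP\|_\infty\bigr).
\]
The $L^\infty$ norm of the coarse part is $O_\P(1)$ uniformly in $\epsilon, K$ by the H\"older--Sobolev embedding in $d=2$, so it remains to produce $g(K) \to \infty$ with $\tilde M_K \geq m_{\epsilon K} + g(K) + O(1)$ in probability. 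But $\tilde M_K$ is the maximum of $K^2$ i.i.d.\ massless DGFFs on cells of side $1/K$ at mesh $\epsilon$, each centred at $m_{\epsilon K}$ with tight fluctuations, and the standard right-tail estimate $\P(\text{DGFF max} > m_{\epsilon K} + x) \gtrsim x e^{-\scaling x}$ from \cite{MR3433630,MR2846636} places the maximum of $K^2$ such copies at height $m_{\epsilon K} + \Theta(\log K)$ with high probability.

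The main obstacle is securing the right-tail lower bound for $\tilde M_K$ with constants uniform in $\epsilon$ and $K$. Rescaling the cell of side $1/K$ at mesh $\epsilon$ to a unit cell at effective mesh $\epsilon K$ reduces this to the Bramson-type right-tail asymptotics for the DGFF on a fixed macroscopic domain with Dirichlet boundary and variable mesh, which is available in the log-correlated extremes literature and transfers essentially verbatim, but requires careful bookkeeping of the effective scale parameter $\epsilon K$. All remaining ingredients, namely the uniform-in-$\epsilon$ H\"older and $L^\infty$ bounds on $\coarsesKGauss$ and $\Phi_s^\cP$, are immediate consequences of \eqref{eq:hoelder-continuity-small-scales} together with the Sobolev embedding from Section \ref{ssec:sobolev}.
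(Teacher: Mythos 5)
Your argument departs from the paper's proof, which cites \cite[Propositions~4.8--4.9]{BH2020Max} wholesale and only adjusts the treatment of $\Phi_s^\Delta$ (replacing the deterministic $C^\alpha$ bound available in the sine-Gordon case by restriction to the event $\{\|\Phi_s^\Delta\|_{C^\alpha(\Omega)}<C\}$). Your re-derivation from generic regularity bounds fails on the Gaussian coarse field $\coarsesKGauss$, not on $\Phi_s^\cP$. For \eqref{e:Psi-fine-field-maximisers} you claim the within-cell oscillation of $\coarsesKGauss$ is $O_\P(K^{-\alpha}\|\coarsesKGauss\|_{C^\alpha})$ with the H\"older seminorm controlled uniformly in $\epsilon$ and $K$ by \eqref{eq:hoelder-continuity-small-scales}; but \eqref{eq:hoelder-continuity-small-scales} bounds $\Phi_s^\GFF$ and $\Phi_s^\cP$ only, while $\coarsesKGauss$ depends on $K$ through its Dirichlet-boundary covariance. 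The second derivatives of that kernel at distance $\delta/K$ from $\Gamma$ scale like $(K/\delta)^2$, so the cell-wise oscillation of $\coarsesKGauss$ is $O_\P(\delta^{-1})$, not $o_\P(1)$ as $K\to\infty$, and the union bound over $K^2$ cells only worsens this. For \eqref{e:GFF-fine-field-maximisers} you assert that $\|\coarsesKGauss\|_{L^\infty(\Omega^\delta)}$ is $O_\P(1)$ uniformly in $K$. This is false: $\var(\coarsesKGauss(x))\sim\frac{1}{2\pi}\log K$ for $x\in\Omega^\delta$, so $\|\coarsesKGauss\|_{L^\infty(\Omega^\delta)}$ is of order $\log K$, the same order as (and with larger leading coefficient than) $\tilde M_K-m_{\epsilon K}$. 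Your bound $\fineGFF(\bar z)\geq\tilde M_K-2\|\coarsesKGauss\|_\infty-2\|\Phi_s^\cP\|_\infty$ therefore lands well below $m_{\epsilon K}$ rather than above $m_{\epsilon K}+g(K)$.

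These two obstructions are precisely the hard content of \cite[Propositions~5.1--5.2]{MR3433630} and are resolved there (and in \cite{BH2020Max}) by exploiting the independence of $\fineGFF$ and $\coarsesKGauss$, the repulsion of the Dirichlet GFF near its maximiser, and covariance estimates for individual increments of $\coarsesKGauss$, rather than global norm bounds. The right-tail estimate for $\tilde M_K$ that you flag as the main obstacle is not the bottleneck; the coarse field is. The economical route is the paper's: the Gaussian analysis is already carried out in \cite[Propositions~4.8--4.9]{BH2020Max}, and the only quantitative input about $\Phi_s^\Delta$ in that proof is a $C^\alpha$ bound, which here holds on an event of probability arbitrarily close to $1$ by \eqref{eq:phi-delta-hoelder} and Markov's inequality.
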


\begin{proof}[Proof of Proposition \ref{prop:maximiser-not-on-grid-phi4} and Proposition  \ref{prop:fine-field-maximisers-phi4}]

In the case of the sine-Gordon field it was used that $\|\Phi_s^\Delta\|_\infty$ is bounded by a deterministic constant and that $\Phi_t^\Delta$ is H\"older continuous.
Thus, the generalisation of these results are immediate when restricting to the event
\begin{equation}
E = \{ \|\Phi_s^\Delta\|_{C^\alpha(\Omega)}  < C \},
\end{equation}
whose probability is arbitrarily close to $1$ if $C$ is large enough.
\end{proof}

\subsection{Approximation by $\epsilon$-independent random variables}

Following \cite[Section 2.3]{MR3433630} we approximate $\Max \PhisKP -\mathfrak{m}_\epsilon$ by
$G^*_{s,K}= \max_{i}G_{s,K}^i$ where 
\begin{equation}
G_{s,K}^i= \rho_{K}^i (Y_{K}^i+g(K)) + \limcoarsePhiP (\bu_\delta^i) - \scalinglog \log K,
\label{e:approximation-maximum}
\end{equation}
and also define
\begin{equation} \label{e:cZdef}
 \ZDM_{s,K}=m_\delta \frac{1}{K^2} \sum_{i=1}^{K^2} (\scalinglog \log K - \limcoarsePhiP(\bu_\delta^i)) e^{ -2\log K +\scaling \limcoarsePhiP(\bu_\delta^i) }.
\end{equation}
Here, the sequence $g(K)$ is as in Proposition~\ref{prop:fine-field-maximisers-phi4} 
and the random variables in \eqref{e:approximation-maximum} and \eqref{e:cZdef}
are all independent and defined as follows:
\begin{itemize}
\item
  The random variables $\rho_{K}^i \in \{0,1\}$, $i=1,\dots, K^2$, are independent Bernoulli random variables
with $\bbP(\rho_{K}^i=1)=\alpha^*  m_\delta g(K) e^{-\scaling g(K)}$
with $\alpha^*$ and $m_\delta$ as in \cite[Proposition 4.6]{MR4399156}.
\item
  The random variables $Y_{K}^i \geq 0$, $i=1,\dots, K^2$,
  are independent and characterised by $\bbP(Y_{K}^i\geq x)= \frac{g(K)+x}{g(K)}e^{- \scaling x}$ for $x\geq 0$.
\item
  The random field $\limcoarsePhiP(x)$, $x\in\Omega$, is a weak limit of the overall coarse field
  $\coarsePhiP \equiv \coarsesKGauss+\Phi_s^\cP$ as $\epsilon \to 0$.
  The existence of this limit is guaranteed by Corollary \ref{cor:pphi-coupling-continuum}
  and \cite[Lemma 4.4]{MR4399156}.
\item
  The random variables $\bu_\delta^i \in V_i^\delta$, $i=1,\dots,K^2$,
  have the limiting distribution of the maximisers $z_i$ of $\fineGFF$ in $V_i^{\delta}$
  as $\epsilon\to 0$.
  Thus, $\bu_\delta^i$ takes values in the $i$-th subbox of $\Omega=\T^2$ and,
  scaled to the unit square, its
  density is $\psi^\delta$ as in 
  \cite[Proposition 4.6]{MR4399156}.
\end{itemize}
Note that the correction in \eqref{e:approximation-maximum} can be understood from
\begin{equation}
\label{e:correction-approximation}
\mathfrak{m}_{\epsilon  K} - \mathfrak{m}_{\epsilon}= - \scalinglog \log K + O_K(\epsilon).
\end{equation}

From here the rest of \cite[Section 4]{MR4399156} can be used without adjustment, as no other properties of the difference field are used. 
We omit further details.

\section*{Acknowledgements}

TSG would like to thank Ajay Chandra for interesting discussions on the Polchinski equation and Romain Panis for useful comments.
MH thanks Roland Bauerschmidt for discussing the project at early stages, as well as Beno\^it Dagallier for the helpful comments.  NB, TSG, and MH would like to thank Hugo Duminil-Copin for hosting us at the Universit\'e de Gen\`eve in April 2022 to work on this project. 

TSG was supported by the Simons Foundation, Grant 898948, HDC. MH was partially supported by the UK EPSRC grant EP/L016516/1 
for the Cambridge Centre for Analysis. NB is supported by the ERC Advanced Grant 741487 (Quantum Fields and Probability).

\section*{Declarations}

\textit{Conflict of interest:} The authors have no relevant financial or non-financial interests to disclose.

\noindent
\textit{Data availability statement:} Not applicable to this article, as no datasets were generated or analysed.

\noindent
\textit{Author contribution:} All authors wrote the manuscript and contributed equally.

\bibliographystyle{abbrv}
\bibliography{all}
\end{document}